\documentclass[12pt,a4paper,twoside]{article}
\usepackage[sectionbib,round]{natbib}
\usepackage{amsmath,amssymb,amsfonts,amsthm,xpatch}
\usepackage{fancyhdr,bm,enumitem,hyperref,graphicx}
\usepackage{multirow,tabularx,authblk,booktabs}
\usepackage{sectsty,secdot,setspace}
\usepackage{algorithm,algorithmic,graphicx}
\usepackage[top = 2.5cm, bottom = 2.5cm, left= 2.5cm, right= 2.5cm]{geometry}
\usepackage[tableposition = top]{caption}
\newcommand{\RNum}[1]{\uppercase\expandafter{\romannumeral #1\relax}}
\newcolumntype{Y}{>{\centering\arraybackslash}X}

\newtheorem{thm}{Theorem}
\newtheorem{lem}{Lemma}
\newtheorem{exm}{Example}
\numberwithin{exm}{section}
\newtheorem{cor}{Corollary}
\newtheorem{defi}{Definition}
\newtheorem{rmk}{Remark}
\newtheorem{prop}{Proposition}
\setcounter{page}{1}
\theoremstyle{definition}
\pagestyle{fancy}
\parskip=0pt
\sectionfont{\fontsize{14pt}{14pt}\selectfont}
\subsectionfont{\fontsize{12pt}{12pt}\selectfont}
\def\spacingset#1{\renewcommand{\baselinestretch}%
{#1}\small\normalsize} \spacingset{1}
\doublespacing
\fancypagestyle{plain}{%
\fancyhf{} % clear all header and footer fields
%\fancyhead[L]{\footnotesize{Statistica Sinica}} % except the center

}
\fancyhf{}
\fancyhead[CE]{\footnotesize \rm CONDITIONAL SCREENING VIA OLS PROJECTION}
\fancyhead[RE]{\thepage}
\fancyhead[RO]{\thepage}
\fancyhead[CO]{\footnotesize \rm NING ZHANG, WENXIN JIANG AND YUTING LAN}

\thispagestyle{plain}
%%%%%%%%%%%%%%%%%%%%%%%%%%%%%%%%%%%%%%%%%%%%%%%%%%%%%%%%%%%%%%%%%%%%%%%%%%%%%%%%%%%%%%%%%%%%%%%%%%%%%%%%%%%%%%%%%%%%%%%%%%%%
\title{\bf \large CONDITIONAL VARIABLE SCREENING VIA ORDINARY LEAST SQUARES PROJECTION}
%Conditional variable screening via ordinary least squares projection
\author[1]{ Ning Zhang \thanks{Email: nzhang2018@gmail.com}}
\author[2, 1]{ Wenxin Jiang \thanks{Email: wjiang@northwestern.edu}}
\author[3, 2]{ Yuting Lan \thanks{Corresponding author. Email: lan.yuting@mail.shufe.edu.cn}}
\affil[1]{\normalsize School of Mathematics, Shandong University, Jinan, China}
\affil[2]{\normalsize Department of Statistics, Northwestern University, Evanston, IL, U.S.}
\affil[3]{\normalsize Department of Statistics and Management, Shanghai University of Finance and Economics, Shanghai, China}
\date{}
\begin{document}
\maketitle
\vspace{-1.5em}
\noindent\makebox[\textwidth][c]{%
\begin{minipage}{0.9\textwidth}
\spacingset{1.3}
\rule{\textwidth}{0.4pt}
{\bf{Abstract}}\\
In this article, we propose a novel variable screening method for linear models named as conditional screening via ordinary least squares projection (COLP). COLP can take advantage of prior knowledge concerning certain active predictors by eliminating the adverse impact of their coefficients in the estimation of remaining ones and thus significantly enhance the screening accuracy. We prove its sure-screening property under reasonable assumptions and demonstrate its utility in an application to a leukemia dataset. Moreover, based on the conditional approach, we introduce an iterative algorithm named as forward screening via ordinary least squares projection (FOLP), which not only could exploit the prior information more effectively, but also has promising performance when no prior knowledge is available using a data-driven conditioning set. Extensive simulation studies are carried out to demonstrate the competence of both proposed methods.\vspace{2.5pt}\\
{\it Keywords:} Conditional variable screening, forward regression, prior information, sure screening property, variable selection.\\
\rule{\textwidth}{0.4pt}
\end{minipage}}
%02165902434
\newpage

\section{Introduction}
\label{sec:intro}

The rapid development of modern information technology during last decades has significantly reduced the cost of data collection and storage. Consequently, scientists nowadays are confronted with unprecedentedly massive data in various scientific fields, such as genomics, economics, signal and image processing and earth sciences, etc. Then how to extract key information from such outsized datasets becomes a great challenge for researchers. Specifically, to identify active predictors (predictors with non-zero coefficients) from the ultrahigh dimensional feature space, statisticians have devoted considerable effort to the research on variable selection and screening techniques.

Recent years have witnessed an explosion in the advancement of variable selection approaches, including but not limited to, the LASSO \citep{LASSO}, the SCAD \citep{SCAD}, the adaptive LASSO \citep{Adalasso} and the elastic net \citep{elastic}.
Through minimizing penalized loss functions, variable selection methods could identify important variables and estimate corresponding parameters simultaneously. Nevertheless, for ultrahigh dimensional data where the predictor dimension expands exponentially with the sample size, many variable selection techniques may no longer be consistent \citep{zhao2006model,Adalasso} and the computational cost of solving high dimensional optimization problems increases dramatically even with the help of some efficient algorithms \citep{LARS,NP}. Concerns on the selection consistency and computational efficiency of variable selection methods motivate the development of variable screening techniques, which are designed to efficiently reduce the predictor dimension to a manageable size such that variable selection approaches can be implemented smoothly afterwards.

\cite{SIS} proposed the seminal sure independence screening (SIS) method to effectively reduce the predictor dimension through ranking marginal correlations between predictors and the response, which is much more efficient in computation compared to solving large-scale optimization problems. Motivated by SIS, a number of variable screening techniques \citep{SISglm,Rcorr,Dcorr} are developed to deal with more general cases applying various marginal utilities. The sure screening property that requires all active predictors to be preserved in the selected model with an overwhelming probability was introduced by \cite{SIS} as a crucial criterion to evaluate the theoretical effectiveness of variable screening methods. \cite{SIS} proved the sure screening property of SIS under the marginal correlation assumption that correlations between active predictors and the response are bounded away from zero, which however, can be easily violated in practice due to high correlations among predictors. Consequently, active predictors that are jointly correlated but marginally uncorrelated with the response are likely to be screened out by SIS, whereas inactive predictors that are highly correlated with active ones have high priority to be selected.

To avoid such undesirable results, \cite{HOLP} introduced another efficient variable screening method named as high dimensional ordinary least squares projection (HOLP), which conducts dimension reduction according to the HOLP estimator constructed by the Moore-Penrose inverse of the design matrix. \cite{HOLP} proved the sure screening property of HOLP without relying on the marginal correlation assumption. Nevertheless, its sure screening property was achieved under the assumption of an upper bound for $||\bm{\beta}||$, the $L_2$ norm of the coefficient vector $\bm{\beta}$. As a result, HOLP may break down when some coefficients are of large absolute values due to their considerable adverse impact on the estimation of other coefficients.

In scientific research, prior information regarding a set of certain active predictors is frequently available from previous studies. For instance, in the analysis of a leukemia dataset, \cite{Golub} found out that two genes, Zyxin and Transcriptional activator hSNF2b, have empirically high correlations with the AML-ALL class distinction. Therefore, further analysis of the leukemia data can be carried out based on this result. To exploit such prior information, \cite{csis} proposed the conditional sure independence screening (CSIS) approach for generalized linear models to identify remaining active predictors through evaluating their conditional contributions to the response conditioning on those known active variables. \cite{csis} proved the sure screening property of CSIS based on the conditional linear covariance assumption, requiring the conditional linear covariances between remaining active predictors and the response to be bounded away from zero. Thus, any active predictor with close-to-zero conditional linear covariance with the response is likely to be screened out by CSIS since its coefficient in the regression with known active predictors is also close to zero.

Motivated by the underperformance of HOLP and CSIS in certain scenarios, we propose a novel conditional variable screening method named as conditional screening via ordinary least squares projection (COLP). COLP initially projects the design matrix onto the orthogonal complement of the column space of conditioning active predictors, and then selects variables according to the estimator for the remaining coefficients constructed by the Moore-Penrose inverse of the projected design matrix. Through the orthogonal projection, COLP could eliminate the negative effect from coefficients of conditioning active predictors in the estimation of remaining ones. The sure screening property of COLP no longer relies on the upper bound of $||\bm{\beta}||$, but only requires the $L_2$ norm of remaining coefficient vector to be bounded from above. Therefore, COLP could identify all remaining active predictors with an overwhelming probability no matter how large the coefficients of conditioning active predictors are. In addition, its sure screening property neither depends on the marginal correlation assumption nor the conditional linear covariance assumption as required by SIS and CSIS, respectively.

We demonstrate the utility of COLP in extensive numerical simulations and an application to a leukemia dataset, where it achieves the best overall performance in the simulation study and yields only one classification error in the analysis of the leukemia data based on only three genes. From the simulation results, we notice that COLP benefits the most from the prior information that covers all significant active predictors (predictors with coefficients of large absolute values). However, it is usually impossible to obtain such informative prior knowledge in scientific research. Therefore, to further enhance the screening accuracy when some significant active predictors are not recognized, we propose an iterative screening method named as forward screening via ordinary least squares projection (FOLP). FOLP employs COLP iteratively conditioning on predictors selected in previous steps, and adds new predictors to the selected model one by one through comparing residual sums of squares (RSS) of candidate models, similar to the classical forward regression \citep[FR]{FR} method. In this way, FOLP is able to diminish the negative impact from the coefficients of unknown active predictors selected in previous iterations. In various simulated scenarios, FOLP exhibits superior screening performance compared to COLP and other commonly used screening techniques. Furthermore, our simulation studies also illustrate that FOLP could work competitively even when no prior information is available by using a data-driven conditioning set selected by HOLP.

The rest of the paper is organized as follows. In Section \ref{sec:colp}, we introduce the COLP method and explain how it could significantly improve the screening accuracy with the help of prior information. In the next, we formally describe the sure screening property of COLP and confirm its numerical effectiveness in three particular examples and an application to a leukemia dataset in Section \ref{sec:perf.colp}. In Section \ref{sec:folp}, we propose the FOLP algorithm and demonstrate its competence in extensive simulation studies. Then we conclude our results and discuss possible future work in Section \ref{sec:conclusion}. Lastly, technical details regarding the sure screening property of COLP can be referred to the appendix.

\section{A new conditional variable screening method: COLP}
\label{sec:colp}
Throughout the paper, we consider the classical linear model
\[y=\bm{x}^\top \bm{\beta}+\epsilon,\]
where $y$ denotes the response, $\bm{x}=(x_1,\cdots,x_p)^\top $ denotes the predictor vector, $\bm{\beta}=(\beta_1,\cdots,\beta_p)^\top $ denotes the coefficient vector and $\epsilon$ denotes the random error. With $n$ realizations of $y$ and $\bm{x}$, we have the alternative model
\[{Y}=X\bm{\beta}+\bm{\epsilon},\]
where ${Y}=(Y_1,\cdots,Y_n)^\top $ denotes the response vector, ${X}\in\mathbb{R}^{n\times p}$ denotes the design matrix and $\bm{\epsilon}=(\epsilon_1,\cdots,\epsilon_n)^\top $ denotes the error vector consisting of $n$ i.i.d random errors. In this paper, we only consider the $p>n$ case. Moreover, let $\mathcal{T}=\{j:\beta_j\neq 0\}$ denote the true model of size $t$, $\mathcal{C}\subsetneqq\mathcal{T}$ denote the conditioning set consisting of indices of $t_c$ active predictors obtained from previous studies, $\mathcal{T}_\mathcal{D}=\mathcal{T}-\mathcal{C}$ denote the remaining true model corresponding to the remaining $t_d = t - t_c$ active predictors and $\mathcal{D}=\{1,\cdots,p\}-\mathcal{C}$ consist of indices of the remaining $p_d = p - t_c$ predictors. Without loss of generality, we further assume that the prior information includes the first $t_c$ predictors, that is, $\mathcal{C}=\{1,\cdots,t_c\}$. For any vector $\bm{v}\in\mathbb{R}^p$ and any index set $\mathcal{S}\subset\{1,\cdots,p\}$, let $\bm{v}_\mathcal{S}$ denote the subvector consisting of the $j$-th entry in $\bm{v}$ with $j\in {\mathcal S}$. Similarly, we denote ${X}_{\mathcal S}$ as the submatrix of ${X}$ with columns corresponding to ${\mathcal S}$.

To identify the remaining active predictors $\bm{x}_{\mathcal{T}_\mathcal{D}}$ based on the prior knowledge of $\bm{x}_\mathcal{C}$, we introduce a new estimator for $\bm{\beta}_{\mathcal{D}}$ as

\begin{equation}\label{est.1}
\hat{\bm{\beta}}_{\mathcal{D}}=(\hat{{\beta}}_{t_c+1},\cdots,\hat{{\beta}}_{p})^\top =({M}_{\mathcal{C}}{X}_{\mathcal{D}})^{+}{Y},
\end{equation}
where ${M}_{\mathcal{C}}$ denotes orthogonal projection matrix on the orthogonal complement of the space spanned by columns of $X_\mathcal{C}$ and $({M}_{\mathcal{C}}{X}_{\mathcal{D}})^{+}$ denotes the Moore-Penrose inverse of ${M}_{\mathcal{C}}{X}_{\mathcal{D}}$. When $X_\mathcal{C}$ is of full column rank, the projection matrix ${M}_{\mathcal{C}}$ can be written as
\[{M}_{\mathcal{C}}=I_n-X_\mathcal{C}(X_\mathcal{C}^\top X_\mathcal{C})^{-1}X_\mathcal{C}^\top,\]
where $I_n$ denotes the $n\times n$ identity matrix.

According to the estimator $\hat{\bm{\beta}}_{\mathcal{D}}$, we then can select the model $\mathcal{S}^\gamma$ applying a threshold parameter $\gamma$ as
\[\mathcal{S}^\gamma=\left\{j\in\mathcal{D}:|\hat\beta_j|>\gamma\right\},\]
or select the model $\mathcal{S}_d$ using a size parameter $d$ as
\[\mathcal{S}_d=\left\{j\in\mathcal{D}:|\hat\beta_j|\text{ are among the largest } d \text{ of all } |\hat\beta_j|\text{s}\right\}.\]

We name the new screening method as conditional screening via ordinary least squares projection (COLP) due to its similarity to the classical ordinary least squares approach and the HOLP method proposed by \cite{HOLP}. In the rest of this section, we will demonstrate how COLP could significantly improve the screening accuracy with the help of the prior information.

\cite{HOLP} introduced the HOLP estimator as
\[\hat{\bm{\beta}}^{*}=(\hat\beta_1^*,\cdots,\hat\beta_p^*)^\top ={X}^\top ({XX}^\top )^{-1}{Y},\]
where the design matrix $X$ is assumed to be of full row rank and ${X}^\top ({XX}^\top )^{-1}=X^+$ in this case. Alternatively, the estimator can be written as
\[\hat{\bm{\beta}}^{*}={X}^{+}Y={X}^{+}{X}\bm{\beta}+{X}^{+}\bm{\epsilon}.\]
Under certain assumptions, \cite{HOLP} proved that the matrix ${X}^{+}{X}$ is diagonally dominant and entries in ${X}^{+}\bm{\epsilon}$ are dominated by corresponding terms in ${X}^{+}{X}\bm{\beta}$ with an overwhelming probability. Consequently, $|\hat{{\beta}}^{*}_i|_{i\in\mathcal{T}}$ can dominate $|\hat{{\beta}}^{*}_j|_{j\not\in\mathcal{T}}$ due to large diagonal terms in ${X}^{+}{X}$. Therefore, the model selected through ranking $|\hat{{\beta}}^{*}_j|$ could preserve all active predictors with an overwhelming probability. However, the screening accuracy of HOLP can be significantly impaired by coefficients of large absolute values in the model, which will be well demonstrated in the following example.

\begin{exm}\label{exm2.1}
We consider the linear model with only two active predictors as
\[y=x_1\beta_1+x_2\beta_2+\sum_{j=3}^px_j\beta_j+\epsilon,\]
where $\beta_1, \beta_2\neq 0$ and $\beta_j=0$ for $3\leq j\leq p$. Notice that even $X^\top ({XX}^\top )^{-1}{X}$ is diagonally dominant under certain assumptions, its off-diagonal terms are usually non-zero. Then if we set the coefficients to $\beta_1=-\frac{m_{22}}{m_{21}}\beta_2$ with $m_{ij}=X_i^\top ({XX}^\top )^{-1}{X}_j$, the HOLP estimator for $\beta_j$ can be written as
\begin{equation}\label{eq.exm21}
\hat\beta_j^*=m_{j1}\beta_1+m_{j2}\beta_2+\hat\epsilon_j=\left(m_{j2}-\frac{m_{22}}{m_{21}}\cdot m_{j1}\right)\beta_2+\hat\epsilon_j,
\end{equation}
where $\hat\epsilon_j=X_j^\top ({XX}^\top )^{-1}\bm{\epsilon}$ for $1\leq j\leq p$.
\end{exm}

Considering $j=2$ in \eqref{eq.exm21}, we can see that the estimate $\hat\beta_2^*$ is only a linear combination of mean-zero random errors, whose absolute value can be easily dominated by those of other estimates. Consequently, the predictor $x_2$ is likely to be screened out by the HOLP method. In this example, HOLP breaks down since $m_{21}\beta_1$ can no longer be dominated by $m_{22}\beta_2$ with $|\beta_1|\gg|\beta_2|$. To ensure the sure screening property of HOLP, \cite{HOLP} set an upper bound for the $L_2$ norm of the coefficient vector through assumptions on the variance of the response and the covariance matrix of predictors.

Nevertheless, significant active predictors are common in real world applications and are more likely to be identified in previous studies. Motivated by the failure of HOLP in such scenarios, we aim to find an efficient screening method that could diminish the adverse impact of non-zero coefficients exploiting the prior information.

With the prior information $\mathcal{C}$, the HOLP estimator for the remaining coefficients $\bm{\beta}_\mathcal{D}$ has the form as
\[\hat{\bm{\beta}}^{*}_{\mathcal{D}}={X}^\top _{\mathcal{D}}({XX}^\top )^{-1}Y={X}^\top _{\mathcal{D}}({XX}^\top )^{-1}{X}_{\mathcal{C}}\bm{\beta}_{\mathcal{C}}+{X}^\top _{\mathcal{D}}({XX}^\top )^{-1}{X}_{\mathcal{D}}\bm{\beta}_{\mathcal{D}}+{X}^\top _{\mathcal{D}}({XX}^\top )^{-1}\bm{\epsilon}.\]
Analogous to Example \ref{exm2.1}, entries in ${X}^\top _{\mathcal{D}}({XX}^\top )^{-1}{X}_{\mathcal{C}}\bm{\beta}_{\mathcal{C}}$ may no longer be dominated by corresponding terms in ${X}^\top _{\mathcal{D}}({XX}^\top )^{-1}{X}_{\mathcal{D}}\bm{\beta}_{\mathcal{D}}$ when $||\bm{\beta}_{\mathcal{C}}||$ is large enough. To diminish the negative impact of $\bm{\beta}_{\mathcal{C}}$ in the estimation of $\bm{\beta}_{\mathcal{D}}$, we propose the COLP estimator for $\bm{\beta}_{\mathcal{D}}$ as
\[\hat{\bm{\beta}}_{\mathcal{D}}=({M}_{\mathcal{C}}{X}_{\mathcal{D}})^{+}{Y}=({M}_{\mathcal{C}}{X}_{\mathcal{D}})^{+}{X}_{\mathcal{C}}\bm{\beta}_{\mathcal{C}}+({M}_{\mathcal{C}}{X}_{\mathcal{D}})^{+}{X}_{\mathcal{D}}\bm{\beta}_{\mathcal{D}}+({M}_{\mathcal{C}}{X}_{\mathcal{D}})^{+}\bm{\epsilon}.\]
According to \cite{mpinv}, the Moore-Penrose inverse $({M}_{\mathcal{C}}{X}_{\mathcal{D}})^{+}$ can be written as
\[{({M}_{\mathcal{C}}{X}_{\mathcal{D}})}^+={X}_{\mathcal{D}}^\top {M}_{\mathcal{C}}{X}_{\mathcal{D}}({X}_{\mathcal{D}}^\top {M}_{\mathcal{C}}{X}_{\mathcal{D}}{X}_{\mathcal{D}}^\top {M}_{\mathcal{C}}{X}_{\mathcal{D}})^-{X}_{\mathcal{D}}^\top {M}_{\mathcal{C}},\]
where $({X}_{\mathcal{D}}^\top {M}_{\mathcal{C}}{X}_{\mathcal{D}}{X}_{\mathcal{D}}^\top {M}_{\mathcal{C}}{X}_{\mathcal{D}})^-$ denotes the corresponding generalized inverse. As a result, we have $({M}_{\mathcal{C}}{X}_{\mathcal{D}})^+=({M}_{\mathcal{C}}{X}_{\mathcal{D}})^+{M}_{\mathcal{C}}$ since ${M}_{\mathcal{C}}^2={M}_{\mathcal{C}}$. Therefore, the COLP estimator can be further expressed as
\begin{align*}
\hat{\bm{\beta}}_{\mathcal{D}}&=({M}_{\mathcal{C}}{X}_{\mathcal{D}})^{+}{M}_{\mathcal{C}}{X}_{\mathcal{C}}\bm{\beta}_{\mathcal{C}}+({M}_{\mathcal{C}}{X}_{\mathcal{D}})^{+}{M}_{\mathcal{C}}{X}_{\mathcal{D}}\bm{\beta}_{\mathcal{D}}+({M}_{\mathcal{C}}{X}_{\mathcal{D}})^{+}\bm{\epsilon}\\
&=({M}_{\mathcal{C}}{X}_{\mathcal{D}})^{+}{M}_{\mathcal{C}}{X}_{\mathcal{D}}\bm{\beta}_{\mathcal{D}}+({M}_{\mathcal{C}}{X}_{\mathcal{D}})^{+}\bm{\epsilon},
\end{align*}
where the last equation comes from the fact that ${M}_{\mathcal{C}}{X}_{\mathcal{C}}=0$. In this way, the COLP estimator could eliminate the influence of $\bm{\beta}_{\mathcal{C}}$ in the estimation of $\bm{\beta}_{\mathcal{D}}$. Furthermore, as shown in the supplementary material, $({M}_{\mathcal{C}}{X}_{\mathcal{D}})^{+}{M}_{\mathcal{C}}{X}_{\mathcal{D}}$ is a diagonally dominant matrix and entries in $({M}_{\mathcal{C}}{X}_{\mathcal{D}})^{+}\bm{\epsilon}$ can be dominated by corresponding terms of $({M}_{\mathcal{C}}{X}_{\mathcal{D}})^{+}{M}_{\mathcal{C}}{X}_{\mathcal{D}}\bm{\beta}_{\mathcal{D}}$ under proper assumptions. Consequently, the model selected through ranking absolute values of COLP estimates could preserve all the remaining active predictors with an overwhelming probability without imposing any restriction on $\bm{\beta}_{\mathcal C}$. At the end, it is also noteworthy that when the prior information is not available (i.e., $\mathcal{C}=\emptyset$), we have $X_\mathcal{D}=X$, $\bm{\beta}_\mathcal{D}=\bm{\beta}$ and ${M}_\mathcal{C}={I}_n$, and thus the COLP estimator degenerates to the HOLP one.

\section{Theoretical and numerical performance of COLP}
\label{sec:perf.colp}
In this section, we introduce the sure screening property of COLP and examine its numerical utility in three
simulated cases and an application to a leukemia dataset.
\subsection{The sure screening property of COLP}
The sure screening property of COLP relies on the following three assumptions.
\begin{enumerate}[nosep,label=({A\arabic*}),align=left, leftmargin=*]
    \item The predictor vector $\bm{x}$ follows a centered multivariate normal distribution with a covariance matrix as
    \[\Sigma=\begin{bmatrix}\Sigma_{11}&\Sigma_{12}\\\Sigma_{21}&\Sigma_{22}\end{bmatrix},\]
    where $\Sigma_{11}$ and $\Sigma_{22}$ are $t_c\times t_c$ and $p_d\times p_d$ matrices with unit diagonal elements.
    \item The random error $\epsilon$ is independent of $\bm{x}$ and follows a centered sub-Gaussian distribution with a variance of $\sigma^2$.
    \item $\mathrm{E}[\mathrm{var}(y|\bm{x}_{\mathcal{C}})]=O(1)$ and there exist constants $c_t,c_\beta> 0$, $c_\lambda\geq 1$ and $\xi_t, \xi_\beta, \xi_\lambda\geq 0$ with $\xi_t+2\xi_\beta+5\xi_\lambda<1$, such that
    \[t\leq c_t n^{\xi_t},\,\,\beta_{\text{min}}=\min_{j\in\mathcal{T}}|\beta_j|\geq c_\beta n^{-\xi_\beta}\,\, \text{and}\,\,\text{cond}(\Sigma)=\frac{\lambda_{\text{max}}(\Sigma)}{\lambda_{\text{min}}(\Sigma)}\leq c_\lambda n^{\xi_\lambda},\]
    where $\lambda_{\text{max}}(\Sigma)$ and $\lambda_{\text{min}}(\Sigma)$ denote the largest and smallest eigenvalues of $\Sigma$, respectively.
\end{enumerate}

The normality assumption (A1) was assumed by \cite{FR} to facilitate the proof of the sure screening property of FR and can be regarded as a special case of assumptions on the distribution of $\bm{x}$ made by \cite{SIS} and \cite{HOLP}.

For simplicity, we only consider sub-Gaussian distributed random errors in assumption (A2), including normal distributed, Bernoulli distributed and other bounded errors. In fact, the sure screening property of COLP still holds for random errors from sub-exponential distributions or distributions with bounded $2k$-th moments since they have similar tail behaviors with sub-Gaussian distributions according to \cite{ver} and \cite{HOLP}.

Moreover, in assumption (A3), we adopt the same restrictions as assumed in \cite{HOLP} except that we set a constant upper bound for the expected conditional variance $\text{E}[\mathrm{var}(y|\bm{x}_{\mathcal{C}})]$ instead of the variance $\text{var}(y)$, which is a weaker condition according to the law of total variance
\[\text{var}(y)=\text{E}[\mathrm{var}(y|\bm{x}_{\mathcal{C}})]+\text{var}[\mathrm{E}(y|\bm{x}_{\mathcal{C}})].\]
Notice that if $\bm{x}$ follows a centered multivariate normal distribution with the covariance matrix satisfying $\text{cond}(\Sigma)\leq c_\lambda n^{\xi_\lambda}$,
a constant upper bound for $\text{E}[\mathrm{var}(y|\bm{x}_{\mathcal{C}})]$ only implies that $||\bm{\beta}_\mathcal{D}||^2\leq C_\beta n^{\xi_\lambda}$ for some positive constant $C_\beta$, whereas $\text{var}(y)=O(1)$ indicates that $||\bm{\beta}||^2\leq C^*_\beta n^{\xi_\lambda}$ for some $C^*_\beta>0$. Therefore, the sure screening property of COLP remains valid with $||\bm{\beta}_\mathcal{C}||$ being arbitrarily large, while that of HOLP may be violated due to the large $L_2$ norm of $\bm{\beta}$.

Additionally, the condition $t=o(n)$ implies that only a small portion of predictors are relevant to the response. Therefore, with $n_d=n-t_c$, all our theoretical results that involve $n_d$ and $p_d$ can be expressed in terms of $n$ and $p$ since $n_d=O(n)$ and $p_d=O(p)$. Finally, it is also noteworthy that the sure screening property of COLP does not depend on the marginal correlation assumption necessary for SIS nor the conditional linear covariance assumption required by CSIS.

The sure screening property of COLP can be formally described in the following two theorems, corresponding to models selected using a threshold parameter $\gamma_n$ and a size parameter $d_n$, respectively.
\begin{thm}\label{thm1}
Under assumptions (A1), (A2) and (A3), if we select the model $\mathcal{S}^{\gamma_n}=\{j\in\mathcal{D}:|\hat\beta_j|>\gamma_n\}$ with a threshold parameter ${\gamma_n}$ satisfying that
\[\frac{n^{1-\xi_\beta-\xi_\lambda}}{\sqrt{\log n}\cdot p}=o(\gamma_n)\quad\text{and}\quad \gamma_n = o\left(\frac{n^{1-\xi_\beta-\xi_\lambda}}{p}\right),\]
then there exists some constant $C>0$, such that
\[P\left(\mathcal{T}_\mathcal{D}\subset \mathcal{S}^{\gamma_n}\right)\geq 1- O\left\{\exp\left(\frac{-C\cdot n^{1-\xi_t-2\xi_\beta-5\xi_\lambda}}{{\log n}}\right)\right\}.\]
\end{thm}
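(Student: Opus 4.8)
The plan is to recast COLP on the original data as an instance of HOLP on a ``conditional'' dataset of reduced effective sample size $n_d=n-t_c$, and then run the HOLP argument with $\bm{\beta}$ replaced by $\bm{\beta}_{\mathcal{D}}$. First I would exploit the joint normality in (A1) to write $X_{\mathcal{D}}=X_{\mathcal{C}}\Sigma_{11}^{-1}\Sigma_{12}+E$, where the rows of $E$ are i.i.d.\ $N(0,\Sigma_{\mathcal{D}\mid\mathcal{C}})$ with conditional covariance $\Sigma_{\mathcal{D}\mid\mathcal{C}}=\Sigma_{22}-\Sigma_{21}\Sigma_{11}^{-1}\Sigma_{12}$ and, crucially, $E$ independent of $X_{\mathcal{C}}$. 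Since $M_{\mathcal{C}}X_{\mathcal{C}}=0$, this gives $M_{\mathcal{C}}X_{\mathcal{D}}=M_{\mathcal{C}}E$. Conditioning on $X_{\mathcal{C}}$ (which has full column rank $t_c$ almost surely as $n>t_c$) and writing $M_{\mathcal{C}}=UU^\top$ for an $n\times n_d$ matrix $U$ with orthonormal columns spanning the range of $M_{\mathcal{C}}$, I set $\widetilde{E}=U^\top E$ and $\widetilde{\bm{\epsilon}}=U^\top\bm{\epsilon}$. Because $U$ has orthonormal columns and is independent of both $E$ and $\bm{\epsilon}$, the rows of $\widetilde{E}$ are again i.i.d.\ $N(0,\Sigma_{\mathcal{D}\mid\mathcal{C}})$, the vector $\widetilde{\bm{\epsilon}}$ retains the sub-Gaussian tail of (A2) with covariance $\sigma^2 I_{n_d}$, and $\widetilde{E}\perp\widetilde{\bm{\epsilon}}$. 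Using the identity $(U\widetilde{E})^+=\widetilde{E}^+U^\top$ (verified directly from the four Penrose conditions via $U^\top U=I_{n_d}$), the representation derived in Section~\ref{sec:colp} collapses to
\[
\hat{\bm{\beta}}_{\mathcal{D}}=\widetilde{E}^+\widetilde{E}\,\bm{\beta}_{\mathcal{D}}+\widetilde{E}^+\widetilde{\bm{\epsilon}},
\]
which is exactly a HOLP estimator with sample size $n_d$, dimension $p_d$, Gaussian design $\widetilde{E}$, sub-Gaussian noise $\widetilde{\bm{\epsilon}}$ and target $\bm{\beta}_{\mathcal{D}}$.

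The second step is to check that (A1)--(A3) supply precisely the inputs the HOLP argument demands for this conditional problem. By standard properties of the Schur complement, $\lambda_{\min}(\Sigma)\le\lambda_{\min}(\Sigma_{\mathcal{D}\mid\mathcal{C}})$ and $\lambda_{\max}(\Sigma_{\mathcal{D}\mid\mathcal{C}})\le\lambda_{\max}(\Sigma)$, so $\mathrm{cond}(\Sigma_{\mathcal{D}\mid\mathcal{C}})\le\mathrm{cond}(\Sigma)\le c_\lambda n^{\xi_\lambda}$. Moreover, under (A1) one computes $\mathrm{var}(y\mid\bm{x}_{\mathcal{C}})=\bm{\beta}_{\mathcal{D}}^\top\Sigma_{\mathcal{D}\mid\mathcal{C}}\bm{\beta}_{\mathcal{D}}+\sigma^2$, so the assumption $\mathrm{E}[\mathrm{var}(y\mid\bm{x}_{\mathcal{C}})]=O(1)$ forces $\bm{\beta}_{\mathcal{D}}^\top\Sigma_{\mathcal{D}\mid\mathcal{C}}\bm{\beta}_{\mathcal{D}}=O(1)$ and hence $\|\bm{\beta}_{\mathcal{D}}\|^2\le O(1)/\lambda_{\min}(\Sigma_{\mathcal{D}\mid\mathcal{C}})=O(n^{\xi_\lambda})$, with no constraint whatsoever on $\bm{\beta}_{\mathcal{C}}$. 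These are exactly the conditioning and $L_2$-norm bounds that power HOLP, now expressed through $\Sigma_{\mathcal{D}\mid\mathcal{C}}$ and $\bm{\beta}_{\mathcal{D}}$ instead of $\Sigma$ and $\bm{\beta}$, which is what localizes the theorem's hypotheses to the remaining coefficients.

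With the reduction in place I would carry out the HOLP-style concentration for $\widetilde{E}^+\widetilde{E}=\widetilde{E}^\top(\widetilde{E}\widetilde{E}^\top)^{-1}\widetilde{E}$, the orthogonal projection onto the row space of $\widetilde{E}$. Writing $\hat\beta_j=(\widetilde{E}^+\widetilde{E})_{jj}\beta_j+\sum_{k\neq j}(\widetilde{E}^+\widetilde{E})_{jk}\beta_k+(\widetilde{E}^+\widetilde{\bm{\epsilon}})_j$ for $j\in\mathcal{D}$, I need three ingredients: (i) the diagonal entries $(\widetilde{E}^+\widetilde{E})_{jj}$ concentrate around their average $n_d/p_d$ (their trace is $n_d$); (ii) the off-diagonal mass is controlled via Cauchy--Schwarz using the idempotence identity $\sum_{k}(\widetilde{E}^+\widetilde{E})_{jk}^2=(\widetilde{E}^+\widetilde{E})_{jj}$ together with the extreme eigenvalues of the Wishart matrix $\widetilde{E}\widetilde{E}^\top$, so that the cross term is of smaller order than the diagonal signal once multiplied against $\|\bm{\beta}_{\mathcal{D}}\|$; and (iii) the noise coordinate $(\widetilde{E}^+\widetilde{\bm{\epsilon}})_j$ is bounded by sub-Gaussian tail estimates after conditioning on $\widetilde{E}$. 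Combining (i)--(iii), for each active $j\in\mathcal{T}_{\mathcal{D}}$ the signal obeys $|\hat\beta_j|\gtrsim (n_d/p_d)\lambda_{\min}(\Sigma_{\mathcal{D}\mid\mathcal{C}})\beta_{\min}$, which is of order $n^{1-\xi_\beta-\xi_\lambda}/p$ once the bound $\lambda_{\min}(\Sigma_{\mathcal{D}\mid\mathcal{C}})\gtrsim n^{-\xi_\lambda}$ and $\beta_{\min}\ge c_\beta n^{-\xi_\beta}$ are inserted; this exceeds $\gamma_n$ by the hypothesis $\gamma_n=o(n^{1-\xi_\beta-\xi_\lambda}/p)$, while $n^{1-\xi_\beta-\xi_\lambda}/(\sqrt{\log n}\,p)=o(\gamma_n)$ guarantees the cross and noise terms remain below $\gamma_n$. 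A union bound over the $t_d\le t\le c_t n^{\xi_t}$ active indices, combined with the failure probabilities of the eigenvalue and concentration events, yields $P(\mathcal{T}_{\mathcal{D}}\subset\mathcal{S}^{\gamma_n})\ge 1-O\{\exp(-Cn^{1-\xi_t-2\xi_\beta-5\xi_\lambda}/\log n)\}$; since every bound is uniform in the realization of $X_{\mathcal{C}}$, I integrate out the conditioning to recover the unconditional statement.

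The main obstacle I anticipate is precisely step (i)--(ii): establishing sharp concentration of the entries of $\widetilde{E}^+\widetilde{E}$ when the rows of $\widetilde{E}$ carry the non-identity covariance $\Sigma_{\mathcal{D}\mid\mathcal{C}}$. One must track carefully how $\lambda_{\min}$ and $\lambda_{\max}$ of $\Sigma_{\mathcal{D}\mid\mathcal{C}}$ propagate through $(\widetilde{E}\widetilde{E}^\top)^{-1}$ and the column norms of $\widetilde{E}$ — this is what generates the successive powers of $n^{\xi_\lambda}$ — and the delicate bookkeeping of these eigenvalue factors against $\beta_{\min}$, the sparsity $t\le c_t n^{\xi_t}$ and the union bound is exactly what assembles the exponent $1-\xi_t-2\xi_\beta-5\xi_\lambda$ and explains the scaling restriction $\xi_t+2\xi_\beta+5\xi_\lambda<1$ imposed in (A3).
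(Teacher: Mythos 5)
Your reduction is, in substance, identical to the paper's own argument: the appendix takes $Q_{\mathcal{C}}$ (your $U$) to be an orthonormal basis of $C(X_{\mathcal{C}})^{\perp}$, sets $W=X_{\mathcal{D}}^\top Q_{\mathcal{C}}$ (the transpose of your $\widetilde{E}$), proves via the same Gaussian regression decomposition that $W\sim N_{p_d,n_d}(0;\Sigma_{22\cdot2},I_{n_d})$ independently of $X_{\mathcal{C}}$ (Proposition~\ref{dist}), and arrives at exactly your representation, since $H_WH_W^\top=W(W^\top W)^{-1}W^\top=\widetilde{E}^{+}\widetilde{E}$ and $W(W^\top W)^{-1}Q_{\mathcal{C}}^\top\bm{\epsilon}=\widetilde{E}^{+}\widetilde{\bm{\epsilon}}$. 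Your Schur-complement eigenvalue sandwich is Proposition~\ref{eigen}, your observation that $\mathrm{E}[\mathrm{var}(y|\bm{x}_{\mathcal{C}})]=\bm{\beta}_{\mathcal{D}}^\top\Sigma_{22\cdot2}\bm{\beta}_{\mathcal{D}}+\sigma^2$ forces $\|\bm{\beta}_{\mathcal{D}}\|^2=O(n^{\xi_\lambda})$ opens the proof of Lemma~\ref{1t}, and your noise step and final union bound match Lemma~\ref{2t} and the concluding argument. So the skeleton is sound and the hypotheses are deployed as intended.

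The genuine gap is your step (ii). The cross term $\sum_{k\neq j}(\widetilde{E}^{+}\widetilde{E})_{jk}\beta_k$ has only $t_d\leq t$ nonzero summands, and Cauchy--Schwarz gives the bound $\bigl(\sum_{k\in\mathcal{T}_{\mathcal{D}}\setminus\{j\}}(\widetilde{E}^{+}\widetilde{E})_{jk}^2\bigr)^{1/2}\|\bm{\beta}_{\mathcal{D}}\|$. If, as you propose, you control the square sum by the idempotence identity, the best you get is $(\widetilde{E}^{+}\widetilde{E})_{jj}\asymp n^{1\pm\xi_\lambda}/p$, hence a cross-term bound of order $\sqrt{n^{1+\xi_\lambda}/p}\cdot n^{\xi_\lambda/2}=n^{1/2+\xi_\lambda}/\sqrt{p}$, whereas the diagonal signal is only of order $n^{1-\xi_\beta-\xi_\lambda}/p$; their ratio is $\sqrt{p}\,n^{\xi_\beta+2\xi_\lambda-1/2}$, which diverges once $p\gg n^{1-2\xi_\beta-4\xi_\lambda}$ --- that is, throughout the ultrahigh-dimensional regime $p>n$ that the theorem targets. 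The extreme eigenvalues of the Wishart matrix $\widetilde{E}\widetilde{E}^\top$ cannot repair this: even with $\Sigma_{22\cdot2}=I$ the full-row squared mass of a rank-$n_d$ projection equals its diagonal entry, so idempotence returns the same useless bound. What is actually needed --- and what the paper uses --- is an \emph{entrywise} high-probability bound $|\bm{e}_i^\top H_WH_W^\top\bm{e}_j|\lesssim n^{1+\tau-\alpha}/(p\sqrt{\log n})$, obtained from the MACG distribution of the orientation $H_W$ on the Stiefel manifold (Lemma~\ref{l.off}, imported from HOLP, specialized in Corollary~\ref{c.off}), summed over only the $t\leq c_t n^{\xi_t}$ active indices. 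The tuning of $\alpha$ there is precisely where the exponent $1-\xi_t-2\xi_\beta-5\xi_\lambda$ is generated, so your closing claim that careful eigenvalue bookkeeping ``assembles'' this exponent cannot be executed from the ingredients you list. A minor related imprecision: the diagonal entries do not concentrate ``around their average $n_d/p_d$'' under non-identity $\Sigma_{22\cdot2}$; Lemma~\ref{diag} only pins them between $c_1 n^{1-\xi_\lambda}/p$ and $c_2 n^{1+\xi_\lambda}/p$, a condition-number-wide window that your final signal bound implicitly, and correctly, accounts for.
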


Theorem \ref{thm1} indicates that the model selected by COLP using an appropriate threshold parameter could preserve all the remaining active predictors with an overwhelming probability under assumptions (A1)-(A3). Furthermore, if we select the model of $d_n$ predictors according to the absolute values of corresponding COLP estimates, all the remaining active predictors can also be identified with an overwhelming probability under an additional assumption on the predictor dimension $p$.

\begin{thm}\label{thm2}
Suppose assumptions (A1), (A2) and (A3) hold and $p$ satisfies that
\[\log p = o\left(\frac{n^{1-\xi_t-2\xi_\beta-5\xi_\lambda}}{{\log n}}\right).\]
Then if we select the model $\mathcal{S}_{d_n}=\{j\in\mathcal{D}:|\hat\beta_j|\text{ are among the largest } d_n \text{ of all } \}$ with $d_n\geq c_t n^{\xi_t}$, there exists the same constant $C$ as chosen in Theorem \ref{thm1}, such that
\[P\left(\mathcal{T}_\mathcal{D}\subset \mathcal{S}_{d_n}\right)\geq 1- O\left\{\exp\left(\frac{-C\cdot n^{1-\xi_t-2\xi_\beta-5\xi_\lambda}}{{\log n}}\right)\right\}.\]
\end{thm}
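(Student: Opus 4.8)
The plan is to reduce Theorem~\ref{thm2} to the two-sided magnitude control already developed in the proof of Theorem~\ref{thm1} and then pay only for an enlarged union bound. Concretely, I expect the proof of Theorem~\ref{thm1} to produce two companion estimates: a lower bound $\min_{j\in\mathcal T_{\mathcal D}}|\hat\beta_j|\ge a_n$ on the active coordinates, where $a_n$ is of order $n^{1-\xi_\beta-\xi_\lambda}/p$, and an upper bound $\max_{j\in\mathcal D\setminus\mathcal T_{\mathcal D}}|\hat\beta_j|\le b_n$ on the inactive coordinates, where $b_n$ is of order $n^{1-\xi_\beta-\xi_\lambda}/(\sqrt{\log n}\,p)$; the admissible window for $\gamma_n$ in Theorem~\ref{thm1} is exactly the interval $(b_n,a_n)$ up to constants. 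Since $b_n/a_n=O(1/\sqrt{\log n})\to 0$, for all large $n$ we have $b_n<a_n$, so on the intersection of these two events every active estimate strictly exceeds every inactive one.

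First I would turn this separation into a deterministic inclusion. On the event $\{\min_{j\in\mathcal T_{\mathcal D}}|\hat\beta_j|>\max_{j\in\mathcal D\setminus\mathcal T_{\mathcal D}}|\hat\beta_j|\}$, ranking $\{|\hat\beta_j|\}_{j\in\mathcal D}$ in decreasing order places all $t_d$ remaining active predictors ahead of every inactive one. Because assumption (A3) gives $t_d\le t\le c_t n^{\xi_t}\le d_n$, selecting the largest $d_n$ coordinates retains all of $\mathcal T_{\mathcal D}$, i.e.\ $\mathcal T_{\mathcal D}\subset\mathcal S_{d_n}$. Thus the failure event $\{\mathcal T_{\mathcal D}\not\subset\mathcal S_{d_n}\}$ is contained in the union of $\{\min_{j\in\mathcal T_{\mathcal D}}|\hat\beta_j|<a_n\}$ and $\{\max_{j\in\mathcal D\setminus\mathcal T_{\mathcal D}}|\hat\beta_j|>b_n\}$, and it remains only to bound these two probabilities.

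The active side is already handled in Theorem~\ref{thm1}: a union bound over the $t_d=O(n^{\xi_t})$ active coordinates of the per-coordinate concentration of $(M_{\mathcal C}X_{\mathcal D})^+M_{\mathcal C}X_{\mathcal D}\bm\beta_{\mathcal D}+(M_{\mathcal C}X_{\mathcal D})^+\bm\epsilon$ yields the rate $O\{\exp(-Cn^{1-\xi_t-2\xi_\beta-5\xi_\lambda}/\log n)\}$. The new ingredient is the inactive side, where I would apply the same per-coordinate tail bound---built from the diagonal dominance of $(M_{\mathcal C}X_{\mathcal D})^+M_{\mathcal C}X_{\mathcal D}$ (which controls the off-diagonal signal leakage) and the sub-Gaussian concentration of $(M_{\mathcal C}X_{\mathcal D})^+\bm\epsilon$, with the spectrum controlled through $\text{cond}(\Sigma)\le c_\lambda n^{\xi_\lambda}$ from (A3)---but now union-bounded over the $p_d=O(p)$ inactive coordinates, introducing a multiplicative factor $p=\exp(\log p)$ in front of the exponential tail.

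The main obstacle, and the sole place where the extra hypothesis enters, is keeping this $p$-fold union bound from degrading the rate. Writing the individual inactive tail as $\exp(-C'n^{1-\xi_t-2\xi_\beta-5\xi_\lambda}/\log n)$, the union bound contributes $\exp(\log p - C'n^{1-\xi_t-2\xi_\beta-5\xi_\lambda}/\log n)$; the assumption $\log p=o(n^{1-\xi_t-2\xi_\beta-5\xi_\lambda}/\log n)$ makes $\log p$ negligible against the exponent, so the bound collapses to $O\{\exp(-Cn^{1-\xi_t-2\xi_\beta-5\xi_\lambda}/\log n)\}$ with the same constant $C$ as in Theorem~\ref{thm1} (after possibly halving it to absorb $\log p$). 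Combining the two sides by subadditivity then yields the claimed probability and completes the reduction.
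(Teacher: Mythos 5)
Your proposal is correct and follows essentially the same route as the paper: the paper likewise combines the per-coordinate bounds from its Lemmas on the signal term $H_WH_W^\top\bm\beta_{\mathcal D}$ and the noise term $W(W^\top W)^{-1}Q_{\mathcal C}^\top\bm\epsilon$, takes a union bound over the $p_d=O(p)$ inactive coordinates, absorbs the resulting $\log p_d$ factor using the hypothesis $\log p = o\bigl(n^{1-\xi_t-2\xi_\beta-5\xi_\lambda}/\log n\bigr)$ (adjusting the constant from $C_0/2$ to $C=C_0/3$, just as you anticipated), and then concludes via the separation event $\max_{j\not\in\mathcal T_{\mathcal D}}|\hat\beta_j|<\gamma_n<\min_{i\in\mathcal T_{\mathcal D}}|\hat\beta_i|$ together with $d_n\geq c_t n^{\xi_t}\geq t\geq t_d$. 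Your direct ranking argument on the separation event is the same mechanism, merely phrased without naming the intermediate threshold $\gamma_n$ explicitly.
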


Theorem \ref{thm1} and \ref{thm2} together provide theoretical guarantee for the COLP approach. In the rest of this section, we will further demonstrate the attractiveness of COLP in the simulation study and an analysis of a leukemia dataset.

\subsection{Simulation study \RNum{1}: conditional screening performance of COLP}\label{sim1}

In this section, we evaluate the conditional screening performances of COLP in three different scenarios, where SIS, CSIS and HOLP may fail to identify all the remaining active predictors due to violations of certain assumptions that their sure screening properties rely on.

In each case, we compare the screening performances of these four methods applying two parameter setups, $(d,n,p)=(200,100,$ $2000)$ and $(d,n,p)=(100,200,10000)$, where $d$ denotes the number of simulations. Additionally, the prior information $\mathcal{C}$ is chosen as $\{1\}$, $\{2\}$ or $\{3,4\}$ in all the examples. With the prior information, SIS and HOLP determine the selected models according to the estimates for the remaining coefficients $\bm{\beta}_{\mathcal{D}}$. Moreover, the size of selected models is set to $d_n=\lfloor n/ \log(n)\rfloor$ and the random error $\epsilon$ is assumed to follow a centered normal distribution with the variance $\sigma^2$ adjusted to achieve $R^2 = 60 \%$ or $R^2 = 90 \%$, where $R^2 =\text{var}(\bm{x}^T\bm{\beta})/\text{var}(y)$ denotes the signal ratio defined by \cite{FR}. Finally, the conditional screening performance is evaluated based on the following criteria.
\begin{itemize}[align=left, leftmargin=*]
\item $\mathbb{P}_s$: the proportion of simulations where all remaining active predictors are included in the selected model.

\item $\mathbb{M}_s$: the median of minimum model sizes (MMS) of selected models that are required to cover all remaining active predictors. The sampling variability of MMS is measured by the robust standard deviation (RSD), which is defined as the associated interquartile range of MMS divided by 1.34.
\end{itemize}

\begin{exm}\label{exm3.1}
We consider the linear model
\[y=5x_1+x_2+x_3+x_4+\sum\limits_{j=5}^px_j\beta_j+\epsilon,\]
where $\bm{x}$ follows the multivariate normal distribution $ N(\bm{0},I_p)$ and $\beta_j=0$ for $j\geq 5$.
\end{exm}

\begin{exm}\label{exm3.2}
We have the linear model
\[y=5x_1+2x_2+2x_3+2x_4-4x_5+\sum\limits_{j=6}^px_j\beta_j+\epsilon,\]
where $\bm{x}$ follows a centered multivariate normal distribution with $\mathrm{cov}(x_1,x_j)=0.5$ for $j\neq 1$ and $\mathrm{cov}(x_i,x_j)=0.75$ for $i,j>1$ and $i\neq j$. The coefficients $\beta_j$s are set to $0$ for $j\geq 6$.
\end{exm}

\begin{exm}\label{exm3.3}
We investigate the linear model
\[y=5x_1+x_2+2x_3+2x_4+2x_5-3x_6+\sum\limits_{j=7}^px_j\beta_j+\epsilon,\]
where $\bm{x}$ follows a centered multivariate normal distribution with $\mathrm{cov}(x_1,x_j)=0$ for $j\neq 1$, $\mathrm{cov}(x_2,x_j)=0$ for $j\neq 2$ and $\mathrm{cov}(x_i,x_j)=0.5$ for $i,j>2$ and $i\neq j$. Additionally, we set $\beta_j=0$ for $j\geq 7$.
\end{exm}

\begin{table}[!htb]
\scriptsize
\renewcommand{\arraystretch}{1}
\caption{\footnotesize The proportion of sure screening $\mathbb{P}_s$, the median of minimum required model sizes $\mathbb{M}_s$ and its robust standard deviation RSD (in parentheses) for screening methods in Example \ref{exm3.1}.}\label{tab3.1}
\begin{tabularx}{\textwidth}{@{}c c Y r Y r Y r Y r@{}}
\toprule
\multirow{2}{*}[-0.4em]{$\mathcal{C}$}
&\multirow{2}{*}[-0.4em]{$R^2$}
&\multicolumn{2}{c}{SIS}
&\multicolumn{2}{c}{CSIS}
&\multicolumn{2}{c}{HOLP}
&\multicolumn{2}{c}{COLP} \\
\cmidrule(l){3-4} \cmidrule(l){5-6} \cmidrule(l){7-8} \cmidrule(l){9-10}
& & $\mathbb{P}_s$ & $\mathbb{M}_s$(RSD)
& $\mathbb{P}_s$ & $\mathbb{M}_s$(RSD)
& $\mathbb{P}_s$ & $\mathbb{M}_s$(RSD)
& $\mathbb{P}_s$ & $\mathbb{M}_s$(RSD) \\
\midrule
\multicolumn{10}{c}{$(d,n,p)=(200,100,2000)$}\\
\midrule
$\{1\}$ & $ 60\%$& 0.00 & 902 (722)& 0.04 & 364 (549)& 0.00 & 878 (732)& 0.02 & 397 (558)\\
& $ 90\%$& 0.01 & 597 (616)& 0.87 & 5 (5)& 0.01 & 598 (532)& 0.85 & 5 (6)\\
\midrule
$\{2\}$ & $ 60\%$& 0.03 & 645 (755)& 0.03 & 653 (764)& 0.04 & 618 (790)& 0.04 & 626 (771)\\
& $ 90\%$& 0.04 & 452 (637)& 0.04 & 446 (626)& 0.03 & 452 (499)& 0.04 & 439 (549)\\
\midrule
$\{3, 4\}$ & $ 60\%$& 0.12 & 241 (459)& 0.12 & 254 (493)& 0.12 & 230 (436)& 0.12 & 238 (473)\\
& $ 90\%$& 0.21 & 96 (278)& 0.26 & 117 (289)& 0.24 & 93 (272)& 0.29 & 103 (251)\\
\midrule
\multicolumn{10}{c}{$(d,n,p)=(100,200,10000)$}\\
\midrule
$\{1\}$ & $ 60\%$& 0.00 & 2420 (2777)& 0.08 & 426 (844)& 0.00 & 2290 (2633)& 0.10 & 412 (691)\\
& $ 90\%$& 0.09 & 644 (1415)& 1.00 & 3 (0)& 0.07 & 582 (1475)& 1.00 & 3 (0)\\
\midrule
$\{2\}$ & $ 60\%$& 0.02 & 1378 (2067)& 0.02 & 1248 (2182)& 0.02 & 1333 (2010)& 0.02 & 1200 (2141)\\
& $ 90\%$& 0.14 & 302 (895)& 0.18 & 296 (762)& 0.13 & 248 (1005)& 0.19 & 280 (761)\\
\midrule
$\{3, 4\}$ & $ 60\%$& 0.22 & 509 (1285)& 0.23 & 372 (1093)& 0.23 & 466 (1251)& 0.25 & 442 (1027)\\
& $ 90\%$& 0.46 & 53 (235)& 0.52 & 32 (154)& 0.44 & 52 (236)& 0.50 & 38 (146)\\
\bottomrule
\end{tabularx}
\end{table}

\begin{table}[!htb]
\scriptsize
\renewcommand{\arraystretch}{1}
\caption{\footnotesize The proportion of sure screening $\mathbb{P}_s$, the median of minimum required model sizes $\mathbb{M}_s$ and its robust standard deviation RSD (in parentheses) for screening methods in Example \ref{exm3.2}.}\label{tab3.2}
\begin{tabularx}{\textwidth}{@{}c c Y r Y r Y r Y r@{}}
\toprule
\multirow{2}{*}[-0.4em]{$\mathcal{C}$}
&\multirow{2}{*}[-0.4em]{$R^2$}
&\multicolumn{2}{c}{SIS}
&\multicolumn{2}{c}{CSIS}
&\multicolumn{2}{c}{HOLP}
&\multicolumn{2}{c}{COLP} \\
\cmidrule(l){3-4} \cmidrule(l){5-6} \cmidrule(l){7-8} \cmidrule(l){9-10}
& & $\mathbb{P}_s$ & $\mathbb{M}_s$(RSD)
& $\mathbb{P}_s$ & $\mathbb{M}_s$(RSD)
& $\mathbb{P}_s$ & $\mathbb{M}_s$(RSD)
& $\mathbb{P}_s$ & $\mathbb{M}_s$(RSD) \\
\midrule
\multicolumn{10}{c}{$(d,n,p)=(200,100,2000)$}\\
\midrule
$\{1\}$& $60\%$& 0.00 & 1988 (43)& 0.00 & 1971 (168)& 0.00 & 1118 (684)& 0.00 & 924 (779)\\
& $90\%$& 0.00 & 1998 (4)& 0.00 & 1999 (1)& 0.01 & 478 (612)& 0.21 & 83 (151)\\
\midrule
$\{2\}$& $60\%$& 0.00 & 1988 (43)& 0.01 & 1617 (721)& 0.01 & 916 (760)& 0.01 & 956 (750)\\
& $90\%$& 0.00 & 1998 (4)& 0.00 & 1568 (760)& 0.06 & 281 (576)& 0.05 & 266 (604)\\
\midrule
$\{3, 4\}$& $60\%$& 0.00 & 1988 (43)& 0.01 & 517 (673)& 0.04 & 384 (658)& 0.04 & 399 (653)\\
& $90\%$& 0.00 & 1997 (4)& 0.16 & 183 (310)& 0.27 & 118 (248)& 0.27 & 98 (240)\\
\midrule
\multicolumn{10}{c}{$(d,n,p)=(100,200,10000)$}\\
\midrule
$\{1\}$& $60\%$& 0.00 & 9997 (8)& 0.00 & 9997 (74)& 0.01 & 2653 (3334)& 0.01 & 1406 (2442)\\
& $90\%$& 0.00 & 9999 (0)& 0.00 & 9999 (0)& 0.05 & 454 (1017)& 0.59 & 18 (43)\\
\midrule
$\{2\}$& $60\%$& 0.00 & 9997 (8)& 0.01 & 8908 (3501)& 0.03 & 1323 (2674)& 0.02 & 1277 (2536)\\
& $90\%$& 0.00 & 9999 (0)& 0.00 & 8045 (3911)& 0.09 & 265 (715)& 0.10 & 234 (705)\\
\midrule
$\{3, 4\}$& $60\%$& 0.00 & 9996 (8)& 0.06 & 1005 (1923)& 0.07 & 700 (2122)& 0.09 & 646 (1994)\\
& $90\%$& 0.00 & 9998 (0)& 0.46 & 53 (208)& 0.52 & 31 (192)& 0.54 & 30 (184)\\
\bottomrule
\end{tabularx}
\end{table}

\begin{table}[!htb]
\scriptsize
\renewcommand{\arraystretch}{1}
\caption{\footnotesize The proportion of sure screening $\mathbb{P}_s$, the median of minimum required model sizes $\mathbb{M}_s$ and its robust standard deviation RSD (in parentheses) for screening methods in Example \ref{exm3.3}.}\label{tab3.3}
\begin{tabularx}{\textwidth}{@{}c c Y r Y r Y r Y r@{}}
\toprule
\multirow{2}{*}[-0.4em]{$\mathcal{C}$}
&\multirow{2}{*}[-0.4em]{$R^2$}
&\multicolumn{2}{c}{SIS}
&\multicolumn{2}{c}{CSIS}
&\multicolumn{2}{c}{HOLP}
&\multicolumn{2}{c}{COLP} \\
\cmidrule(l){3-4} \cmidrule(l){5-6} \cmidrule(l){7-8} \cmidrule(l){9-10}
& & $\mathbb{P}_s$ & $\mathbb{M}_s$(RSD)
& $\mathbb{P}_s$ & $\mathbb{M}_s$(RSD)
& $\mathbb{P}_s$ & $\mathbb{M}_s$(RSD)
& $\mathbb{P}_s$ & $\mathbb{M}_s$(RSD) \\
\midrule
\multicolumn{10}{c}{$(d,n,p)=(200,100,2000)$}\\
\midrule
$\{1\}$& $60\%$& 0.00 & 1939 (147)& 0.00 & 1986 (84)& 0.01 & 948 (737)& 0.01 & 581 (576)\\
& $90\%$& 0.00 & 1982 (128)& 0.00 & 1999 (2)& 0.01 & 494 (657)& 0.47 & 24 (44)\\
\midrule
$\{2\}$& $60\%$& 0.00 & 1909 (241)& 0.00 & 1918 (237)& 0.01 & 718 (718)& 0.01 & 734 (720)\\
& $90\%$& 0.00 & 1972 (156)& 0.00 & 1972 (141)& 0.03 & 224 (373)& 0.03 & 232 (365)\\
\midrule
$\{3, 4\}$& $60\%$& 0.00 & 1938 (150)& 0.00 & 1106 (721)& 0.03 & 646 (771)& 0.03 & 637 (765)\\
& $90\%$& 0.00 & 1980 (128)& 0.01 & 644 (748)& 0.13 & 189 (477)& 0.17 & 214 (397)\\
\midrule
\multicolumn{10}{c}{$(d,n,p)=(100,200,10000)$}\\
\midrule
$\{1\}$& $60\%$& 0.00 & 9966 (208)& 0.00 & 9995 (22)& 0.01 & 2058 (2710)& 0.04 & 612 (885)\\
& $90\%$& 0.00 & 9994 (47)& 0.00 & 9999 (0)& 0.12 & 291 (838)& 0.86 & 6 (8)\\
\midrule
$\{2\}$& $60\%$& 0.00 & 9962 (363)& 0.00 & 9969 (266)& 0.04 & 1371 (2073)& 0.03 & 1424 (1780)\\
& $90\%$& 0.00 & 9994 (50)& 0.00 & 9993 (34)& 0.20 & 154 (302)& 0.22 & 156 (297)\\
\midrule
$\{3, 4\}$& $60\%$& 0.01 & 9966 (208)& 0.00 & 2785 (3453)& 0.09 & 1332 (1877)& 0.09 & 1080 (1653)\\
& $90\%$& 0.00 & 9993 (47)& 0.04 & 1208 (2801)& 0.34 & 98 (443)& 0.45 & 67 (363)\\
\bottomrule
\end{tabularx}
\end{table}

Simulation results are presented in Table \ref{tab3.1}-\ref{tab3.3}, from which we can summarize that COLP has the overall best screening performance, whereas SIS fails to identify all the remaining active predictors in most simulations and CSIS and HOLP break down in various scenarios.

Specifically, in Example \ref{exm3.1}, we consider the linear model with independent predictors and the active predictor $x_1$ has a relatively large coefficient. From Table \ref{tab3.1}, we can see that all these four methods have similar performance when $R^2=60\%$ as a consequence of large random errors. However, when the signal ratio is large and the prior information $\mathcal{C}=\{1\}$ is available, SIS still fails to identify all the remaining active predictors because of small marginal correlations between remaining active predictors and the response. Moreover, HOLP also breaks down in this case as $\beta_1$ has substantial negative impact on the estimation of other coefficients. On the contrary, COLP could eliminate the influence of $\beta_1$ in the estimation and identify all the remaining active predictors in $85\%$ of simulations when $p=2000$ and in all the simulations when $p=10000$. With the help of the prior knowledge, CSIS has similar performance with COLP in this example.

In Example \ref{exm3.2}, we consider the linear model with highly correlated predictors and the active predictor $x_5$ is designed to have zero conditional linear covariance with $y$ conditioning on $x_1$, which is computed as
\[\mathrm{cov}_L(x_5,y|x_1)=\sum_{i=1}^p(\sigma_{i5}-\sigma_{15}\sigma_{11}^{-1}\sigma_{1i})\beta_i=0,\]
where $\Sigma=[\sigma_{ij}]$ denotes the covariance matrix of $\bm{x}$. Recall that the sure screening property of CSIS requires the conditional linear covariances between remaining active predictors and the response to be bounded away from zero. Consequently, we see that CSIS breaks down with the prior information $\mathcal{C}=\{1\}$ under this setting. Moreover, due to high correlations between inactive and active predictors, SIS also fails to identify all the remaining active predictors. As shown in Table \ref{tab3.2}, $\mathbb{M}_s\approx p_d$ for CSIS and SIS when $\mathcal{C}=\{1\}$ and $\mathcal{C}=\{2\}$, indicating that they need to select almost all the remaining predictors to cover the true model. Similarly, the screening performance of HOLP is also significantly impaired by the large coefficient $\beta_1$ in this example. Meanwhile, COLP could overcome all these difficulties and enhance the screening accuracy exploiting the prior information, especially in the $\mathcal{C}=\{1\}$ case.

Finally, we consider a more challenging case in Example \ref{exm3.3}, where the active predictor $x_6$ has zero marginal correlation as well as zero conditional linear covariance with the response conditioning on $x_1$ or $x_2$, that is,
\[\mathrm{cov}(x_6,y)=0,\,\,\mathrm{cov}_L(x_6,y|x_1)=0\,\,\text{and}\,\,\mathrm{cov}_L(x_6,y|x_2)=0.\]
Violation of the marginal correlation assumption and the conditional linear covariance assumption results in the underperformance of SIS and CSIS in this case. As presented in Table \ref{tab3.3}, they have to select almost all the remaining predictors to include the active ones when $\mathcal{C}=\{1\}$ and $\mathcal{C}=\{2\}$. Since the sure screening property of HOLP does not depend on those covariance assumptions, it has slightly better performance in this example. Similarly, COLP could further improve the screening accuracy by eliminating the impact from coefficients of known active predictors.

From above discussions, we can conclude COLP as a competitive screening method compared to other three commonly used techniques, which could avoid the restrictions of the marginal correlation assumption and the conditional linear covariance assumption and be immune from the negative influence of coefficients of active predictors in the prior information. In the next, we will further illustrate the effectiveness of COLP in an analysis of a leukemia dataset.

\subsection{Real data analysis: a leukemia dataset}
In this section, we demonstrate how COLP could be applied to conduct variable selection in the analysis of a leukemia dataset that was first studied by \cite{Golub}, which investigated the gene expression of 7129 genes in two types of acute leukemias, acute lymphoblastic leukemia (ALL) and acute myeloid leukemia (AML). The dataset includes 72 samples (47 ALL and 25 AML), where 38 (27 ALL and 11 AML) of them are designed as training samples and the rest 34 (20 ALL and 14 AML) are chosen as testing samples. Compared to ALL, AML progresses rapidly and has a poor prognosis. Therefore, how to make consistent classification of ALL and AML based on expression of selected genes is crucial for the diagnosis.

In the study of \cite{Golub}, two genes, Zyxin and Transcriptional activator hSNF2b, were identified to have empirically high correlations with the difference between ALL and AML patients. Taking these two genes as the prior information, we identify another gene Myeloperoxidase (MPO) applying the COLP method. The expression of MPO is widely accepted as a golden marker for the diagnosis of AML and its prognostic significance in AML is demonstrated in various literatures \citep{MPO.1,MPO.2,MPO.3}. Based on the three genes, we then employ the logistic regression method (COLP-LR) or the naive Bayes rule (COLP-NB) to conduct the final classification.

In Table \ref{tableuk}, we present the classification results of COLP-LR, COLP-NB and other commonly used techniques, including CSIS, NSC \citep[nearest shrunken centroids]{NSC}, SIS-SCAD-LD and SIS-SCAD-NB, which denote the methods that employ the linear discrimination rule and the naive Bayes rule on the model selected by the SIS and SCAD techniques, respectively. The results of CSIS, NSC and SIS are extracted directly from \cite{csis}, \cite{NSC} and \cite{SIS}. From Table \ref{tableuk}, we can see that COLP-LR, COLP-NB and CSIS all have promising results as they achieve a training error of 0 out of 38 and a test error of 1 out of 34 based on only three genes, whereas NSC and SIS require ten more genes to make the classification and yield less accurate results.

\begin{table}[!htb]
\centering
\small
\renewcommand{\arraystretch}{1.2}
\caption{\footnotesize Classification results of various methods for the leukemia dataset.}\label{tableuk}
\begin{tabularx}{0.8\textwidth}{c Y c Y}
\toprule
Method & Training error & Testing error & Number of genes \\
\midrule
COLP-LR & 0/38 & 1/34 & 3 \\
COLP-NB & 0/38 & 1/34 & 3 \\
CSIS & 0/38 & 1/34 & 3 \\
NSC & 1/38 & 2/34 & 21 \\
SIS-SCAD-LD & 0/38 & 1/34 & 16 \\
SIS-SCAD-NB & 4/38 & 1/34 & 16 \\
\bottomrule
\end{tabularx}
\end{table}

\section{An extension of the COLP method}
\label{sec:folp}
In the previous section, we introduce the asymptotic property of COLP and verify its numerical competence in three simulated scenarios and an application to the leukemia dataset. From the simulation results, we notice that COLP achieves the best performance when the prior information includes all the significant active predictors. Nevertheless, it is usually unrealistic to obtain such informative prior knowledge in scientific researches. To further improve the screening accuracy when some significant active predictors are left out of the prior information, we propose an iterative algorithm in this section as an extension of COLP that could iteratively eliminate the possible impact from coefficients of predictors selected in previous steps and then demonstrate the advantage of the iterative approach in extensive numerical studies.

%Consequently, with some significant active predictors unidentified, the screening results of COLP can still be substantially influenced by their coefficients.

\subsection{Forward screening via ordinary least squares projection}
\label{subsec:folp}
In the simulation study \RNum{1}, we notice that the screening performance of COLP conditioning on $\mathcal{C}=\{1\}$ far exceeds that conditioning on $\mathcal{C}=\{2\}$ and $\mathcal{C}=\{3,4\}$. Such an obvious difference is caused by the large coefficient of $x_1$ and relatively small coefficients of other active predictors in all  three scenarios. On one hand, small coefficients have slight effect on the estimation of remaining ones and thus eliminating their impact will not bring any substantial improvement. On the other hand, with $x_1$ outside the prior information, its large coefficient can still  significantly impair the estimation of remaining parameters.

In practice, researchers usually have to conduct variable selection with some significant active predictors unidentified. To better deal with such common situations, we seek a new screening method that could diminish the impact from coefficients of those hidden significant active predictors. A natural solution is to apply COLP iteratively to eliminate the negative influence from possible large coefficients of predictors selected in previous steps. Inspired by the FR method \citep{FR}, we propose the following iterative algorithm.
\begin{enumerate}[align=left, leftmargin=*]
\item (Initialization) Apply COLP conditioning on the prior information $\mathcal{C}$ and obtain the index set $\mathcal{D}^*$ as a permutation of $\mathcal{D}$ through ranking absolute values of the COLP estimates $\{\hat\beta_j\}_{j\in\mathcal{D}}$ in a decreasing order. Denote $\mathcal{D}^*[1]$ as the first element in $\mathcal{D}^*$ and $\mathcal{D}^*[-1]$ as the subset of $\mathcal{D}^*$ without its first element. Then set $\mathcal{S}_1=\mathcal{D}^*[1]$ and $\mathcal{A}_1=\mathcal{D}^*[-1]$.

\item (Iteration) In the $i$-th iteration, based on the new conditioning set $\mathcal{C}_i=\mathcal{C}\cup\mathcal{S}_i$, we obtain the COLP estimator $\hat{\bm{\beta}}_{\mathcal{D}_i}$ for the remaining coefficients with $\mathcal{D}_i=\{1,\cdots,p\}-\mathcal{C}_i$ and achieve the corresponding permuted index set $\mathcal{D}_i^*$ as in Step 1. Consider two candidate models $\mathcal{M}_{i1}=\mathcal{C}_i\cup \mathcal{A}_i[1]$ and $\mathcal{M}_{i2}=\mathcal{C}_i\cup \mathcal{D}^*_i[1]$ and compute corresponding RSS as
\[\mathrm{RSS}_{ik}=Y^\top\left[{I}_n - {X}_{\mathcal{M}_{ik}}({X}^\top_{\mathcal{M}_{ik}}{X}_{\mathcal{M}_{ik}})^{-1}{X}^\top_{\mathcal{M}_{ik}}\right]Y,\quad k=1\text{ and }2.\]
If $\mathrm{RSS}_{i1}\leq \mathrm{RSS}_{i2}$, set $\mathcal{S}_{i+1}=\mathcal{S}_i\cup\mathcal{A}_i[1]$ and $\mathcal{A}_{i+1}=\mathcal{A}_i[-1]$. Otherwise, set $\mathcal{S}_{i+1}=\mathcal{S}_i\cup\mathcal{D}^*_i[1]$ and $\mathcal{A}_{i+1}=\mathcal{D}^*_i[-1]$.

\item (Solution path) Iterate Step 2 until we get the model $\mathcal{S}_{d_n}$ of size $d_n$ for some predetermined size parameter $d_n$. At the same time, we also obtain a collection of nested models $\mathbb{S}_{d_n}=\{\mathcal{S}_1,\cdots,\mathcal{S}_{d_n}\}$ and name it as the solution path of the algorithm.
\end{enumerate}

We name the iterative algorithm as forward screening via ordinary least squares projection (FOLP) since it applies COLP iteratively and selects predictors one by one through comparing RSS of candidate models similar to FR. However, in each iteration, FR computes the RSS for all the remaining predictors, whereas FOLP only evaluates the RSS of two candidate models, which could considerably drop the computational cost in the high dimensional scenario. Moreover, FR fails to select models of size larger than $n$ due to the limitation of degrees of freedom. In contrast, we can always add new predictors to the selected model according to $\mathcal{A}_n$ once we obtained the model $\mathcal{S}_n$ of size $n$ in the FOLP algorithm.

Furthermore, compared to algorithms that simply apply COLP for several times, FOLP could save us from the additional parameter tuning procedure of deciding the number of times to employ COLP and the number of predictors to select after each time COLP is applied. Both tuning parameters can be determined by comparing the RSS of candidate models in the FOLP algorithm. For instance, when $\mathrm{RSS}_{i1}\leq \mathrm{RSS}_{i2}$ for $1\leq i \leq d_n$, FOLP can be regarded as COLP with the prior information $\mathcal{C}$. On the contrary, if $\mathrm{RSS}_{i1}> \mathrm{RSS}_{i2}$ for $1\leq i \leq d_n$, FOLP is equivalent to employing COLP for $d_n$ times and selecting only one predictor after each time COLP is applied. Lastly, it is noteworthy that FOLP can work competitively without any prior information. As introduced in Section \ref{sec:colp}, with the prior information $\mathcal{C}=\emptyset$, COLP degenerates to the HOLP method. Therefore, in this case, we can choose the prior information as the first predictor selected by HOLP and then employ the FOLP algorithm based on this data-driven conditioning set.

\subsection{Simulation study \RNum{2}: a revisit of Simulation study \RNum{1}}
In this section, we compare the conditional screening performance of COLP and FOLP in Example \ref{exm3.1}-\ref{exm3.3} under the same settings as in Simulation study \RNum{1}. For simplicity, we omit the performance of SIS, CSIS and HOLP here, which can be referred to Table \ref{tab3.1}-\ref{tab3.3}.

\begin{table}[!htb]
%\footnotesize
\scriptsize
\renewcommand{\arraystretch}{1}
\caption{\footnotesize The proportion of sure screening $\mathbb{P}_s$, the median of minimum required model sizes $\mathbb{M}_s$ and its robust standard deviation RSD (in parentheses) for COLP and FOLP in Example \ref{exm3.1}-\ref{exm3.3}.}\label{tab:sim2}
\begin{tabularx}{\textwidth}{@{}c c Y r Y r Y r Y r@{}}
\toprule
\multirow{3}{*}[-0.5em]{$\mathcal{C}$}
&\multirow{3}{*}[-0.5em]{$R^2$}
&\multicolumn{4}{c}{$(d,n,p)=(200,100,2000)$}
&\multicolumn{4}{c}{$(d,n,p)=(100,200,10000)$}\\
\cmidrule(l){3-6} \cmidrule(l){7-10}
& &\multicolumn{2}{c}{COLP}
&\multicolumn{2}{c}{FOLP}
&\multicolumn{2}{c}{COLP}
&\multicolumn{2}{c}{FOLP} \\
\cmidrule(l){3-4} \cmidrule(l){5-6} \cmidrule(l){7-8} \cmidrule(l){9-10}
& & $\mathbb{P}_s$ & $\mathbb{M}_s$(RSD)
& $\mathbb{P}_s$ & $\mathbb{M}_s$(RSD)
& $\mathbb{P}_s$ & $\mathbb{M}_s$(RSD)
& $\mathbb{P}_s$ & $\mathbb{M}_s$(RSD) \\
\midrule
\multicolumn{10}{c}{Example \ref{exm3.1}}\\
\midrule
$\{1\}$& $60\%$& 0.02 & 397 (558)& 0.00 & 1412 (503)& 0.10 & 412 (691)& 0.05 & 6056 (3495)\\
& $90\%$& 0.85 & 5 (6)& 0.90 & 3 (0)& 1.00 & 3 (0)& 1.00 & 3 (0)\\
\midrule
$\{2\}$& $60\%$& 0.04 & 626 (771)& 0.01 & 1222 (606)& 0.02 & 1200 (2141)& 0.10 & 4406 (4092)\\
& $90\%$& 0.04 & 439 (549)& 0.92 & 3 (0)& 0.19 & 280 (761)& 1.00 & 3 (0)\\
\midrule
$\{3, 4\}$& $60\%$& 0.12 & 238 (473)& 0.16 & 707 (798)& 0.25 & 442 (1027)& 0.36 & 1909 (3787)\\
& $90\%$& 0.29 & 103 (251)& 0.99 & 2 (0)& 0.50 & 38 (146)& 1.00 & 2 (0)\\
\midrule
\multicolumn{10}{c}{Example \ref{exm3.2}}\\
\midrule
$\{1\}$& $60\%$& 0.00 & 924 (779)& 0.00 & 1604 (414)& 0.01 & 1406 (2442)& 0.01 & 7177 (2522)\\
& $90\%$& 0.21 & 83 (151)& 0.40 & 329 (876)& 0.59 & 18 (43)& 0.92 & 4 (0)\\
\midrule
$\{2\}$& $60\%$& 0.01 & 956 (750)& 0.01 & 1474 (512)& 0.02 & 1277 (2536)& 0.03 & 6033 (3389)\\
& $90\%$& 0.05 & 266 (604)& 0.56 & 5 (535)& 0.10 & 234 (705)& 0.96 & 4 (0)\\
\midrule
$\{3, 4\}$& $60\%$& 0.04 & 399 (653)& 0.06 & 1030 (860)& 0.09 & 646 (1994)& 0.16 & 2942 (3847)\\
& $90\%$& 0.27 & 98 (240)& 0.78 & 3 (2)& 0.54 & 30 (184)& 0.99 & 3 (0)\\
\midrule
\multicolumn{10}{c}{Example \ref{exm3.3}}\\
\midrule
$\{1\}$& $60\%$& 0.01 & 581 (576)& 0.00 & 1576 (407)& 0.04 & 612 (885)& 0.02 & 5936 (3368)\\
& $90\%$& 0.47 & 24 (44)& 0.91 & 5 (0)& 0.86 & 6 (8)& 1.00 & 5 (0)\\
\midrule
$\{2\}$& $60\%$& 0.01 & 734 (720)& 0.00 & 1499 (560)& 0.03 & 1424 (1780)& 0.07 & 6066 (3503)\\
& $90\%$& 0.03 & 232 (365)& 0.98 & 5 (1)& 0.22 & 156 (297)& 1.00 & 5 (0)\\
\midrule
$\{3, 4\}$& $60\%$& 0.03 & 637 (765)& 0.06 & 1135 (687)& 0.09 & 1080 (1653)& 0.14 & 5254 (4728)\\
& $90\%$& 0.17 & 214 (397)& 0.94 & 4 (0)& 0.45 & 67 (363)& 1.00 & 4 (0)\\
\bottomrule
\end{tabularx}
\end{table}

We present the conditional screening performance of COLP and FOLP in Example \ref{exm3.1}-\ref{exm3.3} in Table \ref{tab:sim2}. Even though COLP achieves the best overall performance in Simulation study \RNum{1}, we can still see a dramatic increase in the screening accuracy of FOLP when $R^2=90\%$ as shown in Table \ref{tab:sim2}. Such improvement is more significant in the $\mathcal{C}=\{2\}$ and $\mathcal{C}=\{3, 4\}$ cases where FOLP could further eliminate the negative influence of $\beta_1$ as expected when $x_1$ is not included in the prior information.

When $R^2=60\%$, there are a few occasions, such as Example \ref{exm3.1} and \ref{exm3.3} with $\mathcal{C}=\{1\}$, where FOLP is slightly outperformed by COLP. Such results are not surprising since COLP benefits the most from the prior information $\mathcal{C}=\{1\}$ by eliminating the influence of the large coefficient of $x_1$, whereas the proportion of sure screening for FOLP is impaired by considerable errors accumulated in the iterative procedure. Furthermore, large random errors also account for the unstable performance of FOLP in certain scenarios. As shown in Table \ref{tab:sim2}, even though FOLP could achieve the highest proportions of sure screening in many cases when $R^2=60\%$, they are always accompanied by large $\mathbb{M}_s$ and RSD, indicating that it has to select more predictors to achieve the sure screening in many simulations. Nevertheless, the small insufficiency of FOLP can be neglected since the other four screening techniques also struggle to identify all the remaining active predictors and there is no much difference between their screening performance in those challenging situations. Moreover, as illustrated in the following section, the post-screening performance of FOLP remains competitive even when random errors are large.

From the simulation results, we see that FOLP slightly sacrifices its screening accuracy in some challenging cases and achieves dramatic performance improvement in most situations. As introduced in Simulation study \RNum{1}, the three examples are particularly designed against SIS, CSIS and HOLP to demonstrate the attractiveness of COLP. To draw a safer conclusion, we will further examine the effectiveness of FOLP in examples that are widely investigated by other statisticians.

\subsection{Simulation study \RNum{3}: conditional screening performance of FOLP}
In this section, we evaluate the conditional screening performance of FOLP in comparison with that of SIS, CSIS, HOLP and COLP in the four examples that are widely investigated by statisticians. In our simulation, we adopt the same parameter setups and evaluating criteria as in Simulation study \RNum{1} with various conditioning sets.

\begin{exm}\label{exm4.1}
In this example, we investigate the linear model with independent predictors studied by \cite{SIS},\cite{FR} and \cite{HOLP}. Recall that the sure screening property of COLP is based on the normality assumption of predictors. Therefore, to evaluate the performance of COLP and FOLP with non-normally distributed predictors, we consider the linear model with independent and exponentially distributed predictors and random error, where $x_j\sim \mathrm{exp}(1)-1$ for $1\leq j\leq p$ and $\epsilon\sim \mathrm{exp}(\lambda)-1/\lambda$ with $\lambda$ adjusted to achieve predetermined signal ratio $R^2$. The coefficients are designed as
\[\beta_j=(-1)^{U_j}[|Z_j|+4\log(n)/n],\quad\text{for}\quad 1\leq j\leq 8,\]
where $U_j$ follows a Bernoulli distribution with ${P}(U_j=1)=0.4$, $Z_j$ is independent of $U_j$ from the standard normal distribution and $\beta_j=0$ for $j>8$.
\end{exm}

\begin{exm}\label{exm4.2}
We borrow the model from \cite{csis} as
\[y=3x_1+3x_2+3x_3+3x_4+3x_5-7.5x_6+\sum_{j=7}^p\beta_jx_j+\epsilon,\]
where all predictors follow the standard normal distribution with equal correlation $0.5$. Such a correlation structure is named as compound symmetry and was investigated in various literatures \citep{SIS,FR,HOLP}. The random error $\epsilon$ follows a centered normal distribution and the coefficients are determined as $\beta_j=0$ for $j\geq 7$.
\end{exm}

\begin{exm}\label{exm4.3}
We consider the linear model with predictors having the autoregressive correlation structure \citep{LASSO,FR,HOLP}, where all predictors follow the standard normal distribution with covariance $\mathrm{cov}(x_i,x_j)=0.5^{|i-j|}$. The random error $\epsilon$ follows a centered normal distribution and coefficients are chosen as
\[\beta_1=3,\,\,\beta_4=-2,\,\,\beta_7=1.5,\,\,\beta_{10}=-4,\,\,\beta_{13}=2,\]
and the remaining ones are set to zero.
\end{exm}

\begin{exm}\label{exm4.4}
In this example, we consider a challenging case studied by \cite{FR} and \cite{HOLP}. In the linear model, we generate the predictors as $x_j=(z_j+w_j)/\sqrt{2}$ for $1\leq j\leq 5$ and $x_j=(z_j+\sum_{i=1}^5 w_i)/2$ for $6\leq j\leq p$, where $z_j$ and $w_j$ are independent and follow the standard normal distribution. The random error $\epsilon$ follows a centered normal distribution and coefficients are determined as $\beta_j=2j$ for $1\leq j\leq 5$ and $\beta_j=0$ for $6\leq j\leq p$.
\end{exm}

\begin{table}[!htb]
\scriptsize
\renewcommand{\arraystretch}{1}
\caption{\footnotesize The proportion of sure screening $\mathbb{P}_s$, the median of minimum required model sizes $\mathbb{M}_s$ and its robust standard deviation RSD (in parentheses) for screening methods in Example \ref{exm4.1}.}\label{tab4.1}
\begin{tabularx}{\textwidth}{@{}c c Y r Y r Y r Y r Y r@{}}
\toprule
\multirow{2}{*}[-0.4em]{$\mathcal{C}$}
&\multirow{2}{*}[-0.4em]{$R^2$}
&\multicolumn{2}{c}{SIS}
&\multicolumn{2}{c}{CSIS}
&\multicolumn{2}{c}{HOLP}
&\multicolumn{2}{c}{COLP}
&\multicolumn{2}{c}{FOLP} \\
\cmidrule(l){3-4} \cmidrule(l){5-6} \cmidrule(l){7-8} \cmidrule(l){9-10} \cmidrule(l){11-12}
& & $\mathbb{P}_s$ & $\mathbb{M}_s$(RSD)
& $\mathbb{P}_s$ & $\mathbb{M}_s$(RSD)
& $\mathbb{P}_s$ & $\mathbb{M}_s$(RSD)
& $\mathbb{P}_s$ & $\mathbb{M}_s$(RSD)
& $\mathbb{P}_s$ & $\mathbb{M}_s$(RSD) \\
\midrule
\multicolumn{12}{c}{$(d,n,p)=(200,100,2000)$}\\
\midrule
$\{1\}$& $60\%$& 0.00 & 537 (543)& 0.00 & 507 (547)& 0.00 & 574 (538)& 0.01 & 489 (551)& 0.04 & 1466 (604)\\
& $90\%$& 0.06 & 172 (282)& 0.08 & 130 (216)& 0.07 & 136 (285)& 0.10 & 114 (215)& 0.99 & 7 (0)\\
\midrule
$\{2, 3\}$& $60\%$& 0.01 & 502 (548)& 0.03 & 332 (390)& 0.01 & 504 (465)& 0.02 & 338 (399)& 0.05 & 1412 (635)\\
& $90\%$& 0.10 & 136 (234)& 0.26 & 68 (138)& 0.10 & 132 (214)& 0.27 & 67 (111)& 0.99 & 6 (0)\\
\midrule
$\{4, 5, 6\}$& $60\%$& 0.06 & 236 (345)& 0.10 & 138 (218)& 0.05 & 247 (377)& 0.09 & 153 (235)& 0.23 & 998 (1069)\\
& $90\%$& 0.21 & 68 (162)& 0.46 & 26 (69)& 0.23 & 64 (118)& 0.51 & 20 (54)& 0.99 & 5 (0)\\
\midrule
\multicolumn{12}{c}{$(d,n,p)=(100,200,10000)$}\\
\midrule
$\{1\}$& $60\%$& 0.10 & 438 (851)& 0.15 & 326 (835)& 0.09 & 372 (887)& 0.15 & 286 (849)& 0.38 & 1045 (3692)\\
& $90\%$& 0.44 & 44 (112)& 0.54 & 32 (87)& 0.49 & 38 (113)& 0.58 & 29 (73)& 1.00 & 7 (0)\\
\midrule
$\{2, 3\}$& $60\%$& 0.10 & 405 (615)& 0.29 & 167 (307)& 0.10 & 401 (627)& 0.28 & 143 (335)& 0.45 & 350 (3334)\\
& $90\%$& 0.43 & 52 (97)& 0.69 & 17 (28)& 0.48 & 40 (97)& 0.69 & 16 (30)& 1.00 & 6 (0)\\
\midrule
$\{4, 5, 6\}$& $60\%$& 0.37 & 63 (206)& 0.52 & 30 (95)& 0.41 & 56 (182)& 0.51 & 33 (114)& 0.79 & 5 (5)\\
& $90\%$& 0.68 & 16 (52)& 0.89 & 7 (7)& 0.68 & 13 (42)& 0.91 & 7 (8)& 1.00 & 5 (0)\\
\bottomrule
\end{tabularx}
\end{table}

\begin{table}[!htb]
\scriptsize
\renewcommand{\arraystretch}{1}
\caption{\footnotesize The proportion of sure screening $\mathbb{P}_s$, the median of minimum required model sizes $\mathbb{M}_s$ and its robust standard deviation RSD (in parentheses) for screening methods in Example \ref{exm4.2}.}\label{tab4.2}
\begin{tabularx}{\textwidth}{@{}c c Y r Y r Y r Y r Y r@{}}
\toprule
\multirow{2}{*}[-0.4em]{$\mathcal{C}$}
&\multirow{2}{*}[-0.4em]{$R^2$}
&\multicolumn{2}{c}{SIS}
&\multicolumn{2}{c}{CSIS}
&\multicolumn{2}{c}{HOLP}
&\multicolumn{2}{c}{COLP}
&\multicolumn{2}{c}{FOLP} \\
\cmidrule(l){3-4} \cmidrule(l){5-6} \cmidrule(l){7-8} \cmidrule(l){9-10} \cmidrule(l){11-12}
& & $\mathbb{P}_s$ & $\mathbb{M}_s$(RSD)
& $\mathbb{P}_s$ & $\mathbb{M}_s$(RSD)
& $\mathbb{P}_s$ & $\mathbb{M}_s$(RSD)
& $\mathbb{P}_s$ & $\mathbb{M}_s$(RSD)
& $\mathbb{P}_s$ & $\mathbb{M}_s$(RSD) \\
\midrule
\multicolumn{12}{c}{$(d,n,p)=(200,100,2000)$}\\
\midrule
$\{1\}$& $60\%$& 0.00 & 1999 (8)& 0.01 & 386 (437)& 0.00 & 497 (539)& 0.00 & 474 (528)& 0.01 & 1482 (542)\\
& $90\%$& 0.00 & 1999 (0)& 0.10 & 111 (221)& 0.09 & 112 (212)& 0.14 & 98 (183)& 0.97 & 5 (0)\\
\midrule
$\{6\}$& $60\%$& 0.01 & 316 (333)& 0.01 & 310 (348)& 0.00 & 586 (590)& 0.01 & 319 (411)& 0.01 & 1536 (502)\\
& $90\%$& 0.07 & 115 (171)& 0.13 & 84 (128)& 0.04 & 156 (298)& 0.64 & 12 (19)& 0.97 & 5 (0)\\
\midrule
$\{2, 3\}$& $60\%$& 0.00 & 1998 (8)& 0.07 & 257 (417)& 0.04 & 330 (426)& 0.05 & 282 (394)& 0.05 & 1298 (738)\\
& $90\%$& 0.00 & 1998 (0)& 0.30 & 46 (101)& 0.17 & 84 (160)& 0.27 & 62 (98)& 0.99 & 4 (0)\\
\midrule
$\{3, 4, 5\}$& $60\%$& 0.00 & 1997 (8)& 0.04 & 500 (657)& 0.10 & 188 (368)& 0.14 & 188 (339)& 0.10 & 985 (831)\\
& $90\%$& 0.01 & 1997 (0)& 0.25 & 131 (388)& 0.38 & 42 (111)& 0.48 & 22 (74)& 0.99 & 3 (0)\\
\midrule
\multicolumn{12}{c}{$(d,n,p)=(100,200,10000)$}\\
\midrule
$\{1\}$& $60\%$& 0.00 & 9999 (0)& 0.06 & 462 (781)& 0.03 & 462 (734)& 0.05 & 463 (715)& 0.09 & 4759 (3524)\\
& $90\%$& 0.00 & 9999 (0)& 0.52 & 36 (84)& 0.56 & 32 (76)& 0.62 & 22 (65)& 1.00 & 5 (0)\\
\midrule
$\{6\}$& $60\%$& 0.01 & 418 (446)& 0.09 & 330 (466)& 0.01 & 588 (870)& 0.16 & 164 (306)& 0.04 & 6466 (4266)\\
& $90\%$& 0.44 & 52 (82)& 0.64 & 23 (49)& 0.44 & 53 (112)& 0.99 & 5 (1)& 1.00 & 5 (0)\\
\midrule
$\{2, 3\}$& $60\%$& 0.00 & 9998 (0)& 0.24 & 160 (310)& 0.13 & 286 (455)& 0.19 & 218 (450)& 0.32 & 2805 (4812)\\
& $90\%$& 0.00 & 9998 (0)& 0.82 & 10 (14)& 0.63 & 24 (59)& 0.76 & 10 (23)& 1.00 & 4 (0)\\
\midrule
$\{3, 4, 5\}$& $60\%$& 0.00 & 9997 (0)& 0.11 & 812 (2230)& 0.19 & 193 (527)& 0.28 & 112 (288)& 0.34 & 2246 (5005)\\
& $90\%$& 0.00 & 9997 (0)& 0.37 & 68 (296)& 0.68 & 8 (55)& 0.81 & 5 (14)& 1.00 & 3 (0)\\
\bottomrule
\end{tabularx}
\end{table}

\begin{table}[!htb]
\scriptsize
\renewcommand{\arraystretch}{1}
\caption{\footnotesize The proportion of sure screening $\mathbb{P}_s$, the median of minimum required model sizes $\mathbb{M}_s$ and its robust standard deviation RSD (in parentheses) for screening methods in Example \ref{exm4.3}.}\label{tab4.3}
\begin{tabularx}{\textwidth}{@{}c c Y r Y r Y r Y r Y r@{}}
\toprule
\multirow{2}{*}[-0.4em]{$\mathcal{C}$}
&\multirow{2}{*}[-0.4em]{$R^2$}
&\multicolumn{2}{c}{SIS}
&\multicolumn{2}{c}{CSIS}
&\multicolumn{2}{c}{HOLP}
&\multicolumn{2}{c}{COLP}
&\multicolumn{2}{c}{FOLP} \\
\cmidrule(l){3-4} \cmidrule(l){5-6} \cmidrule(l){7-8} \cmidrule(l){9-10} \cmidrule(l){11-12}
& & $\mathbb{P}_s$ & $\mathbb{M}_s$(RSD)
& $\mathbb{P}_s$ & $\mathbb{M}_s$(RSD)
& $\mathbb{P}_s$ & $\mathbb{M}_s$(RSD)
& $\mathbb{P}_s$ & $\mathbb{M}_s$(RSD)
& $\mathbb{P}_s$ & $\mathbb{M}_s$(RSD) \\
\midrule
\multicolumn{12}{c}{$(d,n,p)=(200,100,2000)$}\\
\midrule
$\{1\}$& $60\%$& 0.01 & 740 (726)& 0.02 & 628 (692)& 0.01 & 677 (713)& 0.01 & 516 (678)& 0.23 & 934 (1035)\\
& $90\%$& 0.01 & 446 (628)& 0.07 & 250 (534)& 0.03 & 359 (487)& 0.09 & 209 (385)& 0.99 & 4 (0)\\
\midrule
$\{7\}$& $60\%$& 0.04 & 260 (493)& 0.09 & 197 (415)& 0.09 & 232 (418)& 0.12 & 179 (351)& 0.54 & 6 (671)\\
& $90\%$& 0.21 & 79 (138)& 0.23 & 62 (98)& 0.26 & 56 (96)& 0.32 & 44 (85)& 1.00 & 4 (0)\\
\midrule
$\{4, 7\}$& $60\%$& 0.24 & 76 (196)& 0.30 & 59 (144)& 0.31 & 60 (183)& 0.35 & 44 (130)& 0.73 & 3 (40)\\
& $90\%$& 0.49 & 22 (57)& 0.52 & 19 (42)& 0.56 & 14 (42)& 0.60 & 14 (26)& 1.00 & 3 (0)\\
\midrule
$\{ 10, 13 \}$& $60\%$& 0.01 & 623 (667)& 0.16 & 144 (280)& 0.01 & 552 (673)& 0.12 & 161 (270)& 0.28 & 712 (1035)\\
& $90\%$& 0.04 & 420 (599)& 0.56 & 16 (52)& 0.07 & 326 (443)& 0.61 & 13 (43)& 1.00 & 3 (0)\\
\midrule
\multicolumn{12}{c}{$(d,n,p)=(100,200,10000)$}\\
\midrule
$\{ 1 \}$& $60\%$& 0.00 & 1724 (2666)& 0.02 & 1471 (3182)& 0.04 & 1751 (2081)& 0.02 & 1366 (2825)& 0.56 & 6 (2491)\\
& $90\%$& 0.12 & 496 (1609)& 0.20 & 234 (1001)& 0.13 & 426 (1403)& 0.26 & 247 (665)& 1.00 & 4 (0)\\
\midrule
$\{ 7 \}$& $60\%$& 0.27 & 130 (483)& 0.34 & 82 (378)& 0.32 & 118 (577)& 0.34 & 90 (429)& 0.94 & 4 (0)\\
& $90\%$& 0.59 & 19 (90)& 0.67 & 16 (48)& 0.64 & 16 (76)& 0.69 & 12 (44)& 1.00 & 4 (0)\\
\midrule
$\{ 4, 7 \}$& $60\%$& 0.54 & 27 (94)& 0.59 & 16 (106)& 0.59 & 24 (81)& 0.62 & 16 (97)& 0.97 & 3 (0)\\
& $90\%$& 0.80 & 7 (14)& 0.85 & 6 (12)& 0.84 & 6 (10)& 0.86 & 5 (8)& 1.00 & 3 (0)\\
\midrule
$\{ 10, 13 \}$& $60\%$& 0.01 & 1723 (2755)& 0.31 & 106 (327)& 0.05 & 1750 (2133)& 0.35 & 138 (304)& 0.58 & 4 (2152)\\
& $90\%$& 0.14 & 408 (1624)& 0.94 & 4 (2)& 0.14 & 372 (1422)& 0.94 & 4 (2)& 1.00 & 3 (0)\\
\bottomrule
\end{tabularx}
\end{table}

\begin{table}[!htb]
\scriptsize
\renewcommand{\arraystretch}{1}
\caption{\footnotesize The proportion of sure screening $\mathbb{P}_s$, the median of minimum required model sizes $\mathbb{M}_s$ and its robust standard deviation RSD (in parentheses) for screening methods in Example \ref{exm4.4}.}\label{tab4.4}
\begin{tabularx}{\textwidth}{@{}c c Y r Y r Y r Y r Y r@{}}
\toprule
\multirow{2}{*}[-0.4em]{$\mathcal{C}$}
&\multirow{2}{*}[-0.4em]{$R^2$}
&\multicolumn{2}{c}{SIS}
&\multicolumn{2}{c}{CSIS}
&\multicolumn{2}{c}{HOLP}
&\multicolumn{2}{c}{COLP}
&\multicolumn{2}{c}{FOLP} \\
\cmidrule(l){3-4} \cmidrule(l){5-6} \cmidrule(l){7-8} \cmidrule(l){9-10} \cmidrule(l){11-12}
& & $\mathbb{P}_s$ & $\mathbb{M}_s$(RSD)
& $\mathbb{P}_s$ & $\mathbb{M}_s$(RSD)
& $\mathbb{P}_s$ & $\mathbb{M}_s$(RSD)
& $\mathbb{P}_s$ & $\mathbb{M}_s$(RSD)
& $\mathbb{P}_s$ & $\mathbb{M}_s$(RSD) \\
\midrule
\multicolumn{12}{c}{$(d,n,p)=(200,100,2000)$}\\
\midrule
$\{ 1 \}$& $60\%$& 0.00 & 1999 (0)& 0.00 & 1999 (0)& 0.22 & 218 (677)& 0.20 & 246 (699)& 0.83 & 4 (1)\\
& $90\%$& 0.00 & 1999 (0)& 0.00 & 1999 (0)& 0.56 & 14 (192)& 0.54 & 14 (229)& 1.00 & 4 (0)\\
\midrule
$\{ 5 \}$& $60\%$& 0.00 & 1999 (0)& 0.00 & 1999 (0)& 0.04 & 913 (807)& 0.09 & 782 (818)& 0.45 & 36 (554)\\
& $90\%$& 0.00 & 1999 (0)& 0.00 & 1999 (0)& 0.10 & 592 (875)& 0.18 & 382 (778)& 0.98 & 4 (0)\\
\midrule
$\{ 2, 3 \}$& $60\%$& 0.00 & 1998 (0)& 0.00 & 1998 (0)& 0.19 & 337 (731)& 0.15 & 398 (711)& 0.51 & 20 (225)\\
& $90\%$& 0.00 & 1998 (0)& 0.00 & 1998 (0)& 0.18 & 376 (779)& 0.17 & 451 (845)& 0.97 & 3 (0)\\
\midrule
\multicolumn{12}{c}{$(d,n,p)=(100,200,10000)$}\\
\midrule
$\{ 1 \}$& $60\%$& 0.00 & 9999 (0)& 0.00 & 9999 (0)& 0.33 & 278 (1989)& 0.28 & 344 (2449)& 0.98 & 4 (0)\\
& $90\%$& 0.00 & 9999 (0)& 0.00 & 9999 (0)& 0.51 & 32 (525)& 0.50 & 39 (593)& 1.00 & 4 (0)\\
\midrule
$\{ 5 \}$& $60\%$& 0.00 & 9999 (0)& 0.00 & 9999 (0)& 0.14 & 1966 (4441)& 0.10 & 2137 (3626)& 0.60 & 8 (575)\\
& $90\%$& 0.00 & 9999 (0)& 0.00 & 9999 (0)& 0.20 & 844 (2640)& 0.17 & 822 (2685)& 1.00 & 4 (0)\\
\midrule
$\{ 2, 3 \}$& $60\%$& 0.00 & 9998 (0)& 0.00 & 9998 (0)& 0.32 & 439 (1484)& 0.17 & 1673 (3586)& 0.66 & 4 (551)\\
& $90\%$& 0.00 & 9998 (0)& 0.00 & 9998 (0)& 0.38 & 190 (1461)& 0.20 & 1048 (3104)& 1.00 & 3 (0)\\
\bottomrule
\end{tabularx}
\end{table}

We summarize the simulation results in Table \ref{tab4.1}-\ref{tab4.4}, from which we can draw a similar conclusion to that obtained in Simulation study \RNum{2}, that is, FOLP achieves the overall highest screening accuracy and COLP has the second best performance in most situations. When $R^2=90\%$, FOLP could identify all the remaining active predictors in almost all the simulations and it can also significantly enhance the screening accuracy in many cases with large random errors when $R^2=60\%$.

By considering exponentially distributed predictors and random errors, we in Example \ref{exm4.1} test the effectiveness of COLP and FOLP against the normality assumption that the sure screening property of COLP relies on. We see from Table \ref{tab4.1} that both COLP and FOLP have outstanding performance compared to the other three methods even when the assumption is violated. We also notice that SIS and CSIS could work normally under this setting thanks to the zero correlations among predictors.

In Example \ref{exm4.2}, the active predictor $x_6$ is designed to have zero marginal correlation with the response. Therefore, SIS breaks down as expected and its performance improves when $x_6$ is included in the conditioning set. Compared to HOLP, COLP also benefits significantly from the prior information of $x_6$ through eliminating the adverse impact from its coefficient. Nevertheless, the performance of COLP with other prior information is not that satisfying due to the hidden significant active predictor, whereas FOLP is able to eliminate the influence from coefficients of unidentified active predictors and yields much more accurate screening results regardless of the magnitude of random errors.

In Example \ref{exm4.3}, predictors that are close to each other are highly correlated and those with large distances in position are expected to be mutually independent approximately. Under this setting, CSIS and COLP have similar screening performance, whereas FOLP could further enhance the screening accuracy through the iterative procedure. Finally, in the challenging model introduced in Example \ref{exm4.4}, SIS fails to work since correlations between $x_j$ and $y$ for $1\leq j \leq 5$ are much smaller than those between $x_j$ and $y$ for $j>5$. Moreover, Table \ref{tab4.4} shows that SIS and CSIS have to select all the remaining predictors to cover the true model in all simulations. Meanwhile, HOLP and COLP have much better performance since their sure screening properties do not depend on any covariance assumption. As expected, FOLP achieves the best performance in all the settings, where it could identify all the remaining active predictors in almost all the simulations when $R^2=90\%$. From the extensive simulations, we can safely conclude that FOLP is an extremely competitive conditional variable screening technique compared to other commonly used screening methods.

\subsection{Simulation study \RNum{4}: post-screening performance of FOLP}

As mentioned in Section \ref{subsec:folp}, FOLP could work smoothly without the prior information by using a conditioning set of the predictor obtained by HOLP. We know that variable screening techniques are designed as a preselection step to facilitate further variable selection and parameter estimation procedures. Therefore, in this section, we examine the utility of FOLP in the cases without any prior knowledge by comparing the variable selection performance based on models obtained by FOLP and other screening methods.

In the simulation study, we employ the extended BIC \citep{chen2008} after applying FOLP to select the final model from its solution path by minimizing the objective function
\[\text{EBIC}(\mathcal{S})=\log\left(\frac{\text{RSS}_{\mathcal{S}}}{n}\right)+\frac{|\mathcal{S}|}{n}\cdot(\log n+2\log p),\]
where $\text{RSS}_{\mathcal{S}}$ denotes the RSS for model $\mathcal{S}$ and $|\mathcal{S}|$ denotes its model size. We call this selection procedure as FOLP-EBIC, and compare its performance in Example \ref{exm4.1}-\ref{exm4.4} with that of other two-stage variable selection methods, including SIS-SCAD, ISIS-SCAD, HOLP-LASSO, HOLP-SCAD and FR-EBIC. For SIS and ISIS, we only apply SCAD to conduct the variable selection since it was shown to achieve the best numerical performance by \cite{SIS}. And for FR, we employ EBIC to select the model from its solution path according to \cite{FR}. We also present the selection performance of LASSO and SCAD in the simulation for reference, where all the tuning parameters of them are determined applying EBIC. Lastly, we only consider the $(d,n,p)=(100,200,10000)$ case in Example \ref{exm4.1}-\ref{exm4.4} for simplicity and the selection performance of aforementioned methods is evaluated based on the following criteria.
\begin{itemize}[align=left, leftmargin=*, noitemsep]
\item $\#\text{FNs}$: the average number of active predictors outside the selected models.
\item $\#\text{FPs}$: the average number of inactive predictors included in the selected models.
\item Size: the average model size of the selected models.
\item $\mathbb{P}_s$: the proportion of simulations where the selected models cover the true model.
\item $\mathbb{P}_e$: the proportion of simulations where the selected models equal the true model.
\item $\overline{\text{Err}}$: the average estimation error computed as
\[\overline{\text{Err}}=\frac{1}{d}\sum_{k=1}^d ||\hat{\bm{\beta}}^{(k)}-\bm{\beta}||^2,\]
where $\hat{\bm{\beta}}^{(k)}$ denotes the estimate of $\bm{\beta}$ in the $k$-th simulation.
\item $\hat R^2$: the average out-of-sample $R^2$ computed as
\[\hat R^2=\frac{1}{d}\sum_{k=1}^d\left[1-\frac{||Y^*-X^*\hat{\bm{\beta}}^{(k)}||^2}{\sum_{i=1}^n(Y_i^*-{\bar{Y}}^*)^2}\right]\times 100\%,\]
where $(Y^*,X^*)$ is a set of testing data independent of the $d$ datasets with $Y^*=(Y_1^*,\cdots,Y_n^*)$ and $\bar{Y}^*=\sum_{i=1}^nY_i^*/n$. The out-of-sample $R^2$ measures the effectiveness of $\hat{\bm{\beta}}^{(k)}$ in the out-of-sample forecasting.
\item Time: the average running time of the method in seconds.
\end{itemize}

\begin{table}[!htb]
\footnotesize
\renewcommand{\arraystretch}{1}
\caption{\footnotesize The variable selection performance in Example \ref{exm4.1}-\ref{exm4.4} with $R^2=60\%$.}\label{tab:sim4.1}
\begin{tabularx}{\textwidth}{c l Y c Y c Y c Y c}
\toprule
Example & Method & $\#\text{FNs}$  & $\#\text{FPs}$ & Size & $\mathbb{P}_s$
& $\mathbb{P}_e$ & $\overline{\text{Err}}$ & $\hat R^2$ & Time\\
\midrule
\ref{exm4.1}& LASSO& 7.80& 0.00& 0.20& 0.00& 0.00& 7.28& 1.11& 0.05\\
& SCAD& 7.75& 0.00& 0.25& 0.00& 0.00& 7.26& 1.49& 0.10\\
& SIS-SCAD& 7.81& 0.00& 0.19& 0.00& 0.00& 7.29& 0.99& 0.03\\
& ISIS-SCAD& 7.81& 0.00& 0.19& 0.00& 0.00& 7.29& 0.99& 0.45\\
& HOLP-LASSO& 7.82& 0.00& 0.18& 0.00& 0.00& 7.29& 0.90& 0.09\\
& HOLP-SCAD& 7.81& 0.00& 0.19& 0.00& 0.00& 7.29& 0.99& 0.09\\
& FR-EBIC& 3.89& 0.13& 4.24& 0.05& 0.05& 4.57& 33.43& 0.42\\
& FOLP-EBIC& 3.91& 0.12& 4.21& 0.05& 0.05& 4.58& 33.33& 0.05\\
\midrule
\ref{exm4.2}& LASSO& 5.95& 0.00& 0.05& 0.00& 0.00& 10.07& -0.06& 0.05\\
& SCAD& 5.91& 0.02& 0.11& 0.00& 0.00& 10.04& 0.38& 0.10\\
& SIS-SCAD& 5.98& 0.00& 0.02& 0.00& 0.00& 10.08& -0.31& 0.03\\
& ISIS-SCAD& 5.74& 0.23& 0.49& 0.00& 0.00& 9.80& 2.01& 0.50\\
& HOLP-LASSO& 5.98& 0.00& 0.02& 0.00& 0.00& 10.08& -0.30& 0.09\\
& HOLP-SCAD& 5.90& 0.06& 0.16& 0.00& 0.00& 9.98& 0.51& 0.09\\
& FR-EBIC& 3.32& 1.47& 4.15& 0.00& 0.00& 7.43& 23.68& 0.43\\
& FOLP-EBIC& 3.07& 1.11& 4.04& 0.00& 0.00& 7.13& 25.45& 0.06\\
\midrule
\ref{exm4.3}& LASSO& 3.01& 0.02& 2.01& 0.00& 0.00& 4.46& 27.74& 0.05\\
& SCAD& 2.83& 0.02& 2.19& 0.01& 0.01& 4.22& 30.77& 0.09\\
& SIS-SCAD& 2.82& 0.10& 2.28& 0.00& 0.00& 4.16& 31.40& 0.03\\
& ISIS-SCAD& 2.77& 0.09& 2.32& 0.01& 0.00& 4.17& 31.22& 0.58\\
& HOLP-LASSO& 3.12& 0.01& 1.89& 0.00& 0.00& 4.53& 26.47& 0.09\\
& HOLP-SCAD& 2.82& 0.14& 2.32& 0.00& 0.00& 4.16& 31.43& 0.09\\
& FR-EBIC& 1.39& 0.03& 3.64& 0.13& 0.13& 2.02& 54.97& 0.43\\
& FOLP-EBIC& 1.39& 0.04& 3.65& 0.13& 0.13& 2.02& 54.94& 0.06\\
\midrule
\ref{exm4.4}& LASSO& 4.41& 0.53& 1.12& 0.01& 0.00& 13.42& 8.33& 0.06\\
& SCAD& 1.88& 1.34& 4.46& 0.16& 0.04& 7.36& 43.96& 0.09\\
& SIS-SCAD& 4.37& 1.21& 1.84& 0.00& 0.00& 14.36& 32.47& 0.03\\
& ISIS-SCAD& 4.43& 1.59& 2.16& 0.00& 0.00& 15.42& 26.61& 0.01\\
& HOLP-LASSO& 4.32& 0.65& 1.33& 0.00& 0.00& 13.08& 10.58& 0.09\\
& HOLP-SCAD& 1.89& 1.76& 4.87& 0.03& 0.01& 7.57& 51.33& 0.09\\
& FR-EBIC& 2.45& 0.78& 3.33& 0.00& 0.00& 7.51& 52.75& 0.43\\
& FOLP-EBIC& 1.72& 0.02& 3.30& 0.00& 0.00& 4.37& 56.70& 0.05\\
\bottomrule
\end{tabularx}
\end{table}

\begin{table}[htb]
\footnotesize
\renewcommand{\arraystretch}{1}
\caption{\footnotesize The variable selection performance in Example \ref{exm4.1}-\ref{exm4.4} with $R^2=90\%$.}\label{tab:sim4.2}
\begin{tabularx}{\textwidth}{c l Y c Y c Y c Y c}
\toprule
Example & Method & $\#\text{FNs}$  & $\#\text{FPs}$ & Size & $\mathbb{P}_s$
& $\mathbb{P}_e$ & $\overline{\text{Err}}$ & $\hat R^2$ & Time\\
\midrule
\ref{exm4.1}& LASSO& 0.66& 0.99& 8.33& 0.89& 0.31& 2.94& 71.06& 0.06\\
& SCAD& 0.00& 0.72& 8.72& 1.00& 0.64& 1.02& 88.58& 0.09\\
& SIS-SCAD& 0.95& 0.63& 7.68& 0.35& 0.29& 1.76& 81.09& 0.03\\
& ISIS-SCAD& 0.00& 0.78& 8.78& 1.00& 0.54& 0.82& 89.54& 0.01\\
& HOLP-LASSO& 1.73& 1.01& 7.28& 0.38& 0.08& 3.37& 62.96& 0.09\\
& HOLP-SCAD& 0.86& 0.64& 7.78& 0.38& 0.34& 1.70& 81.71& 0.09\\
& FR-EBIC& 0.00& 0.19& 8.19& 1.00& 0.83& 0.54& 90.53& 0.42\\
& FOLP-EBIC& 0.00& 0.19& 8.19& 1.00& 0.83& 0.54& 90.53& 0.05\\
\midrule
\ref{exm4.2}& LASSO& 4.82& 0.44& 1.62& 0.12& 0.00& 9.35& 12.54& 0.07\\
& SCAD& 0.00& 0.02& 6.02& 1.00& 0.98& 0.72& 89.16& 0.09\\
& SIS-SCAD& 4.50& 0.36& 1.86& 0.00& 0.00& 9.52& 16.65& 0.03\\
& ISIS-SCAD& 0.75& 0.86& 6.11& 0.53& 0.52& 2.34& 83.34& 0.01\\
& HOLP-LASSO& 3.00& 4.84& 7.84& 0.39& 0.00& 6.49& 42.59& 0.09\\
& HOLP-SCAD& 0.73& 0.91& 6.18& 0.43& 0.43& 2.45& 84.07& 0.09\\
& FR-EBIC& 0.00& 0.09& 6.09& 1.00& 0.91& 0.71& 89.19& 0.43\\
& FOLP-EBIC& 0.00& 0.03& 6.03& 1.00& 0.97& 0.69& 89.20& 0.06\\
\midrule
\ref{exm4.3}& LASSO& 0.06& 0.46& 5.40& 0.95& 0.55& 1.97& 79.73& 0.07\\
& SCAD& 0.00& 0.06& 5.06& 1.00& 0.94& 0.49& 88.64& 0.09\\
& SIS-SCAD& 1.27& 0.07& 3.80& 0.11& 0.11& 1.94& 76.96& 0.02\\
& ISIS-SCAD& 0.02& 0.08& 5.06& 0.98& 0.92& 0.45& 88.69& 0.01\\
& HOLP-LASSO& 1.21& 0.41& 4.20& 0.13& 0.07& 2.64& 70.80& 0.09\\
& HOLP-SCAD& 1.21& 0.06& 3.85& 0.13& 0.13& 1.86& 77.67& 0.09\\
& FR-EBIC& 0.00& 0.04& 5.04& 1.00& 0.96& 0.32& 89.13& 0.43\\
& FOLP-EBIC& 0.00& 0.04& 5.04& 1.00& 0.96& 0.32& 89.13& 0.07\\
\midrule
\ref{exm4.4}& LASSO& 0.24& 7.14& 11.90& 0.84& 0.00& 3.17& 85.35& 0.08\\
& SCAD& 0.16& 0.17& 5.01& 0.84& 0.74& 1.22& 89.08& 0.09\\
& SIS-SCAD& 4.34& 1.20& 1.86& 0.00& 0.00& 13.99& 44.06& 0.03\\
& ISIS-SCAD& 4.22& 1.25& 2.03& 0.00& 0.00& 13.66& 45.41& 0.59\\
& HOLP-LASSO& 1.29& 5.87& 9.58& 0.17& 0.00& 4.46& 83.39& 0.09\\
& HOLP-SCAD& 1.27& 0.50& 4.23& 0.14& 0.14& 3.76& 84.24& 0.09\\
& FR-EBIC& 0.37& 0.66& 5.29& 0.63& 0.32& 1.60& 88.94& 0.43\\
& FOLP-EBIC& 0.21& 0.00& 4.79& 0.79& 0.79& 1.09& 89.17& 0.05\\
\bottomrule
\end{tabularx}
\end{table}
The simulation results are presented in Table \ref{tab:sim4.1} and \ref{tab:sim4.2}. We can see that in overall, FOLP-EBIC yields the most accurate screening results in both $R^2=60\%$ and $R^2=90\%$ cases in terms of the proportion of sure screening, the proportion of exact screening, the estimation error and the out-of-sample $R^2$. It also achieves the smallest or close to smallest $\#\text{FNs}$ and $\#\text{FPs}$ in all the simulations. Meanwhile, the screening accuracy of FR-EBIC follows close behind that of FOLP-EBIC in the simulation study. However, the average running time of FR-EBIC is more than eight times of that of FOLP-EBIC in many cases. Such an obvious difference in the computing time can be further enlarged as $p$ increases since FOLP only considers two candidate models in each iteration regardless of the predictor dimension. In fact, even though FOLP-EBIC operates iteratively, its computational efficiency is relatively high compared to the competitors and is only exceeded by that of SIS-SCAD. From the simulation results, we can claim FOLP as a promising screening method according to its outstanding post-screening performance in terms of the selection accuracy and efficiency.

\section{Concluding remarks}
\label{sec:conclusion}
In this paper, we propose a new conditional variable screening method for linear models named as COLP to take advantage of prior information concerning certain active predictors. By eliminating negative influence from coefficients of known active predictors, COLP works competitively in the simulation study and an application to a leukemia dataset. Moreover, we also notice that there is plenty of room for improvement on the screening accuracy of COLP when some significant active predictors are left out of the prior information. To solve this problem, we introduce another screening approach called FOLP to further diminish the adverse effect of hidden active predictors through employing COLP iteratively. Extensive numerical studies show that FOLP has outstanding performance regardless of the availability of prior information. Recall that in the analysis of the leukemia dataset, we initially select the third gene by applying COLP on the linear model. Even though it works out effectively, how to extend similar algorithms to deal with a wider class of models such as generalized linear models is still an interesting topic for the future study.
\section*{Appendix}
\appendix
\section{Preliminaries}
Let $\mathcal{O}(p)$ denote the orthogonal group consisting of all $p\times p$ orthogonal matrices and $V_{n,p}=\{A\in \mathbb{R}^{p\times n}:\,\,A^\top A=I_n\}$ denote the space formed by $n$-frames in $\mathbb{R}^p$. $V_{n,p}$ is called the Stiefel manifold and on the manifold there exists a natural measure $(dX)$ called the Haar measure, which is invariant under both right and left orthogonal transformations \citep{manifold}. By standardization, we can obtain a probability measure as $[dX]=(dX)/V(n,p)$ on the Stiefel manifold with $V(n,p)=2^n\pi^{np/2}/\Gamma_n(p/2)$, where $\Gamma_m(a)=\pi^{m(m-1)/4}\prod_{i=1}^m\Gamma(a-(i-1)/2)$ with $\Gamma$ being the standard gamma function. A random matrix is said to be uniformly distributed on $V_{n,p}$ if its distribution is invariant under both left and right orthogonal transformations, which can be obtained through following decompositions of random matrices.
\begin{defi}[Singular value decomposition, \cite{manifold}, Page 20]
For any $n \times p$ matrix $Z$ with $p\geq n$, there exist $V\in\mathcal{O}(n)$, $U\in V_{n,p}$ and an $n\times n$ diagonal matrix $D$ with non-negative elements, such that
\[Z=VD U^\top .\]
\end{defi}
\begin{defi}[Polar decomposition, \cite{manifold}, Page 19]
For any $p\times n$ matrix $Z$ of rank $n$, we have the unique decomposition
\[Z=H_ZT_Z^{1/2}\quad \text{with} \quad H_Z=Z(Z^\top Z)^{-1/2}\quad\text{and}\quad T_Z=Z^\top Z,\]
where $H_Z\in V_{n,p}$ is called the orientation of $Z$.
\end{defi}

Regarding the distributions of $U$ and $H_Z$ on $V_{n,p}$, we have the following two lemmas, respectively.
\begin{lem}[\cite{SIS}, Lemma 1]\label{fan.svd}
Let $Z$ be an $n\times p$ random matrix with the singular value decomposition $Z=VD U^\top $ and $\bm z^\top _i$ denote the $i$-th row of $Z$, $i=1,2,\cdots,n$. If $\bm z_i$s are independent and their distributions are invariant under right-orthogonal transformations, then $U$ is uniformly distributed on the manifold $V_{n,p}$.
\end{lem}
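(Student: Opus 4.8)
The plan is to lift the row-wise invariance hypothesis to the matrix level and then track how the orientation factor of the singular value decomposition transforms under right multiplication by an orthogonal matrix. First I would note that since the rows $\bm z_i^\top$ are independent and each has a law invariant under right-orthogonal transformations, the whole matrix satisfies $ZQ \stackrel{d}{=} Z$ for every fixed $Q\in\mathcal{O}(p)$: applying the same $Q$ to each row sends the product law $\prod_i \mu_i$ to $\prod_i(\mu_i\circ Q)=\prod_i\mu_i$ by marginal invariance.

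Next I would follow the orientation factor through this symmetry. Writing $Z=VDU^\top$, we have $ZQ=VD(Q^\top U)^\top$, and since $(Q^\top U)^\top(Q^\top U)=U^\top Q Q^\top U=I_n$, the triple $(V,D,Q^\top U)$ is again a valid singular value decomposition, now of $ZQ$, with the same $V$ and $D$ but orientation $Q^\top U\in V_{n,p}$. Combined with $ZQ\stackrel{d}{=}Z$, this indicates that the law of the orientation factor is unchanged under the substitution $U\mapsto Q^\top U$; as $Q$ ranges over $\mathcal{O}(p)$ so does $Q^\top$, so the distribution of $U$ is invariant under left multiplication by every element of $\mathcal{O}(p)$.

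To conclude I would invoke uniqueness of the invariant probability measure under a transitive compact-group action. The group $\mathcal{O}(p)$ acts transitively on $V_{n,p}$ by left multiplication, with $V_{n,p}\cong\mathcal{O}(p)/\mathcal{O}(p-n)$, and the Haar measure on $\mathcal{O}(p)$ pushes forward to the unique left-$\mathcal{O}(p)$-invariant probability measure on $V_{n,p}$, namely the uniform measure $[dX]$ introduced above. Hence the left-invariance established in the previous step forces $U$ to be uniformly distributed on $V_{n,p}$, and the right-$\mathcal{O}(n)$ invariance demanded by the definition then comes for free, being a property of $[dX]$.

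The hard part will be the non-uniqueness of the singular value decomposition, which makes ``the orientation factor of $Z$'' ambiguous and so renders the implication $ZQ\stackrel{d}{=}Z\Rightarrow U\stackrel{d}{=}Q^\top U$ non-automatic. I would handle this by restricting to the event — of probability one for the non-degenerate, rotationally symmetric rows considered here, since the set of matrices with a repeated singular value is a lower-dimensional variety — that $Z$ has full rank $n$ with distinct singular values, on which the decomposition is unique up to the signs of paired columns of $U$ and $V$. Fixing a measurable selection and then applying an independent uniform $\pm1$ sign randomization $\Sigma$ jointly to the columns of $U$ and $V$ (which preserves validity of the decomposition, since $V\Sigma D\Sigma U^\top=VDU^\top$ for diagonal $\Sigma$) washes out this residual ambiguity equivariantly, so that the distributional identity $U\stackrel{d}{=}Q^\top U$ holds for the randomized orientation and the argument goes through.
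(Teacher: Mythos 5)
The paper never proves this lemma: it is imported verbatim from \cite{SIS} (their Lemma~1) and used as a black box (its only application here is in Remark~\ref{rmk1}, to a Gaussian matrix), so there is no in-paper proof to compare against. Your argument is, in substance, the standard proof and it is correct: lifting the row-wise invariance to $ZQ\overset{d}{=}Z$, observing that $(V,D,Q^\top U)$ is an SVD of $ZQ$ so that the orientation law is invariant under left multiplication by $\mathcal{O}(p)$, and then invoking uniqueness of the invariant probability measure on the homogeneous space $V_{n,p}\cong\mathcal{O}(p)/\mathcal{O}(p-n)$. Your route to bi-invariance is also the right one: right $\mathcal{O}(n)$-invariance (required by the paper's definition of ``uniform'') cannot be read off from any symmetry of $Z$ directly, since $D$ and a right factor $R$ do not commute, so it must come exactly as you get it --- by identifying the law with the bi-invariant measure $[dX]$ via uniqueness. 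Your sign-randomization device on the distinct-singular-value event is a legitimate repair of the selection/equivariance issue that most published versions of this lemma (including the cited one) pass over in silence: since $\Sigma D\Sigma=D$ for diagonal sign matrices, the randomized orientation satisfies $U(ZQ)\overset{d}{=}Q^\top U(Z)$, which is all the argument needs.

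One caveat: your claim that distinct singular values occur with probability one does not follow from the lemma's hypotheses, only from the extra qualifier ``non-degenerate'' that you slipped in. Rotational invariance of the rows permits atoms --- for instance each row equal to $\bm 0$ with positive probability, in which case $Z=0$ on an event of positive probability, every singular value ties at zero, and a deterministic choice of $U$ would violate the conclusion. So, as stated, the lemma really asserts that \emph{some version} of $U$ (e.g., your randomized one, or a conditionally Haar choice on the degenerate event) is uniform, or else it needs a non-degeneracy hypothesis. This is harmless for the paper, whose only use of the lemma is for $Z\sim N_{n,p}(0;I_n,I_p)$, where full rank and distinct singular values hold almost surely; but if you want your proof to cover the hypotheses exactly as written, extend the randomization to the repeated-singular-value event (choose $U$ conditionally Haar over the compact set of valid orientations there) rather than discarding it as null.
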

\begin{lem}[\cite{manifold}, Theorem 2.4.6]\label{macg}
Suppose that a $p\times n$ random matrix $Z$ has the density function of the form
\[f_{Z}(Z)=|\Sigma|^{-n/2}g(Z^\top \Sigma^{-1} Z),\]
where $\Sigma$ is a $p\times p$ positive definite matrix. If the distribution of $Z$ is invariant under the right orthogonal transformation, then its orientation $H_Z$ follows the matrix angular central Gaussian distribution MACG($\Sigma$) on $V_{n,p}$ with the density function
\[f_{H_Z}(H_Z)=|\Sigma|^{-n/2}|H_Z^\top \Sigma^{-1}H_Z|^{-p/2}.\]
\end{lem}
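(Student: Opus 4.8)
The plan is to pass from the density of $Z$ to the joint density of the pair $(H_Z,T_Z)$ supplied by the polar decomposition $Z=H_ZT_Z^{1/2}$, with $H_Z\in V_{n,p}$ and $T_Z=Z^\top Z$ an $n\times n$ positive-definite matrix, and then to recover the marginal law of $H_Z$ by integrating out $T_Z$. First I would record the standard Jacobian of the polar map,
\[(dZ)=2^{-n}|T_Z|^{(p-n-1)/2}(dT_Z)(H_Z^\top dH_Z),\]
where $(H_Z^\top dH_Z)$ is the invariant measure on $V_{n,p}$ and $(dT_Z)$ is Lebesgue measure on symmetric positive-definite matrices. Substituting $Z=H_ZT_Z^{1/2}$ into $f_Z(Z)=|\Sigma|^{-n/2}g(Z^\top\Sigma^{-1}Z)$ then yields the joint density
\[f(H_Z,T_Z)=2^{-n}|\Sigma|^{-n/2}\,g\!\left(T_Z^{1/2}H_Z^\top\Sigma^{-1}H_ZT_Z^{1/2}\right)|T_Z|^{(p-n-1)/2}.\]

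The decisive step uses the invariance hypothesis. Spelling out $f_Z(ZQ)=f_Z(Z)$ for every $Q\in\mathcal{O}(n)$ gives $g(Q^\top SQ)=g(S)$ for all positive-definite $S$, since $Z^\top\Sigma^{-1}Z$ ranges over all such matrices as $Z$ varies; hence $g$ depends on its argument only through its eigenvalues. Writing $A=H_Z^\top\Sigma^{-1}H_Z$, the matrices $T_Z^{1/2}AT_Z^{1/2}$ and $A^{1/2}T_ZA^{1/2}$ are both similar to $AT_Z$ and so share eigenvalues, whence $g(T_Z^{1/2}AT_Z^{1/2})=g(A^{1/2}T_ZA^{1/2})$. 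I would then substitute $S=A^{1/2}T_ZA^{1/2}$, for which the congruence Jacobian gives $(dT_Z)=|A|^{-(n+1)/2}(dS)$ together with $|T_Z|^{(p-n-1)/2}=|A|^{-(p-n-1)/2}|S|^{(p-n-1)/2}$.

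Collecting the powers of $|A|$ produces $-(n+1)/2-(p-n-1)/2=-p/2$, so integrating over $S>0$ leaves
\[f_{H_Z}(H_Z)=\left[2^{-n}\int_{S>0}g(S)|S|^{(p-n-1)/2}(dS)\right]|\Sigma|^{-n/2}|H_Z^\top\Sigma^{-1}H_Z|^{-p/2},\]
in which the bracketed factor is a constant free of $H_Z$. Because $f_{H_Z}$ must integrate to one and the profile $|\Sigma|^{-n/2}|H_Z^\top\Sigma^{-1}H_Z|^{-p/2}$ already integrates to one against the normalized Haar measure $[dH]$ by the MACG normalization identity $\int_{V_{n,p}}|H^\top\Sigma^{-1}H|^{-p/2}(dH)=|\Sigma|^{n/2}V(n,p)$, the constant collapses to exactly the factor converting $(dH)$ into $[dH]$, and the density of $H_Z$ is $|\Sigma|^{-n/2}|H_Z^\top\Sigma^{-1}H_Z|^{-p/2}$, as claimed.

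The step I expect to demand the most care is the Jacobian bookkeeping: verifying the polar-decomposition volume element and the congruence identity $(dT_Z)=|A|^{-(n+1)/2}(dS)$, since any slip in these exponents would destroy the cancellation yielding the $-p/2$ power. A companion point worth stating cleanly is that the eigenvalue-only dependence of $g$ is genuinely needed to pass from $T_Z^{1/2}AT_Z^{1/2}$ to the congruent $A^{1/2}T_ZA^{1/2}$; this is precisely where the right-orthogonal invariance hypothesis enters, and is not merely a free consequence of the functional form $g(Z^\top\Sigma^{-1}Z)$.
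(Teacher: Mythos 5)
The paper never proves this lemma --- it is quoted as a preliminary, with proof deferred to \cite{manifold} (Theorem 2.4.6) --- so there is no internal argument to compare against; what you have written is, in substance, the classical proof of that theorem, and it is correct. The polar-decomposition Jacobian $(dZ)=2^{-n}|T_Z|^{(p-n-1)/2}(dT_Z)(H_Z^\top dH_Z)$ is the standard one for a full-rank $p\times n$ matrix with $p\ge n$; the congruence Jacobian $(dT_Z)=|A|^{-(n+1)/2}(dS)$ and the factor $|T_Z|^{(p-n-1)/2}=|A|^{-(p-n-1)/2}|S|^{(p-n-1)/2}$ are likewise correct, and the exponents do combine to $-(n+1)/2-(p-n-1)/2=-p/2$, which is the whole point. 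You are also right on the conceptual issue: the functional form $g(Z^\top\Sigma^{-1}Z)$ alone does not make the law of $Z$ right-orthogonally invariant (take $g$ depending on a single diagonal entry of its argument), so the invariance hypothesis is genuinely what licenses $g(Q^\top SQ)=g(S)$, hence eigenvalue-only dependence, hence $g(T_Z^{1/2}AT_Z^{1/2})=g(A^{1/2}T_ZA^{1/2})$. The one place you lean on an external fact unnecessarily is the normalization: rather than invoking the MACG normalizing identity, you can fix the constant directly from $\int f_Z(Z)(dZ)=1$ --- substitute $Z=\Sigma^{1/2}Y$ (Jacobian $|\Sigma|^{n/2}$) and apply the same polar decomposition to $Y$, which yields $2^{-n}\int_{S>0}g(S)|S|^{(p-n-1)/2}(dS)\cdot V(n,p)=1$, i.e.\ your bracketed constant equals $1/V(n,p)$, exactly the factor converting $(dH)$ into the normalized measure $[dH]$. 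With that substitution the argument is fully self-contained and matches the textbook proof step for step.
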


For uniformly distributed matrices on $V_{n,p}$, we also have the following result.

\begin{prop}[\cite{HOLP}, Proposition 2]\label{u.tail}
Let $U$ be uniformly distributed on $V_{n,p}$. Then for any constant $C>0$, there exist constants $\tilde c_1$ and $\tilde c_2$ with $0<\tilde c_1<1<\tilde c_2$, such that
\[P\left(\bm e_1^\top UU^\top \bm e_1<\tilde c_1\cdot\frac{n}{p}\right)<2e^{-Cn}\quad\text{and}\quad P\left(\bm e_1^\top UU^\top \bm e_1>\tilde c_2\cdot\frac{n}{p}\right)< 2e^{-Cn},\]
where $\bm e_i=(0,\cdots,1,0,\cdots,0)^\top $ in this supplement denotes the $i$-th natural base in the corresponding Euclidean space, whose dimensionality is to be understood from the context.
\end{prop}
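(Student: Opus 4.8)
The plan is to show that $\bm e_1^\top UU^\top\bm e_1$ has an explicit one–dimensional law and then to read off both tails from classical chi–square deviation bounds. The key structural fact is that $UU^\top$ is the orthogonal projector onto the $n$–dimensional column space of $U$, so $\bm e_1^\top UU^\top\bm e_1=\|U^\top\bm e_1\|^2$ is the squared length of the projection of $\bm e_1$ onto that subspace. Because the uniform law on $V_{n,p}$ is invariant under left multiplication by any fixed $Q\in\mathcal O(p)$, we have $UU^\top\stackrel{d}{=}QUU^\top Q^\top$, whence $\bm e_1^\top UU^\top\bm e_1\stackrel{d}{=}(Q^\top\bm e_1)^\top UU^\top(Q^\top\bm e_1)$ for every fixed $Q$. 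First I would take $Q$ itself uniform on $\mathcal O(p)$ and independent of $U$, so that $\bm w:=Q^\top\bm e_1$ is uniformly distributed on the sphere $S^{p-1}$; conditioning on $U$ and using $U^\top U=I_n$ then reduces the problem, by rotational symmetry, to the law of $\sum_{i=1}^n w_i^2$ for a uniform $\bm w\in S^{p-1}$, independently of $U$.

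Next I would represent the uniform sphere vector as $\bm w=\bm g/\|\bm g\|$ with $\bm g\sim N(\bm 0,I_p)$, which gives the exact identity
\[\bm e_1^\top UU^\top\bm e_1\stackrel{d}{=}\frac{\sum_{i=1}^n g_i^2}{\sum_{i=1}^p g_i^2}=\frac{A}{A+B},\]
where $A=\sum_{i=1}^n g_i^2$ and $B=\sum_{i=n+1}^p g_i^2$ are independent with $A\sim\chi^2_n$ and $B\sim\chi^2_{p-n}$. Thus $\bm e_1^\top UU^\top\bm e_1$ follows a $\mathrm{Beta}(n/2,(p-n)/2)$ distribution with mean exactly $n/p$, which is precisely the centering appearing in the statement. (One could instead reach the same $A/(A+B)$ form through the Gaussian orientation: by Lemma~\ref{macg} with $\Sigma=I_p$ the orientation $H_Z=Z(Z^\top Z)^{-1/2}$ of a standard Gaussian matrix is uniform on $V_{n,p}$, and a Sherman--Morrison reduction of $\bm z_1^\top(Z^\top Z)^{-1}\bm z_1$ produces the same ratio; the direct spherical argument above is cleaner.)

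It then remains to bound the two tails of $A/(A+B)$. For constants $a<1<b$ the deterministic inclusions
\[\Big\{\tfrac{A}{A+B}<\tfrac{a}{b}\cdot\tfrac np\Big\}\subset\{A<an\}\cup\{A+B>bp\},\qquad\Big\{\tfrac{A}{A+B}>\tfrac ba\cdot\tfrac np\Big\}\subset\{A>bn\}\cup\{A+B<ap\}\]
hold, so it suffices to control one–sided chi–square deviations. By the standard Chernoff bound $P(\chi^2_m\le\delta m)\le\exp\{-\tfrac m2(\delta-1-\log\delta)\}$ for $\delta<1$ and its upper–tail analogue for $\delta>1$, the probabilities $P(A<an)$ and $P(A>bn)$ are at most $e^{-n\phi(a)}$ and $e^{-n\phi(b)}$ with $\phi(\delta)=\tfrac12(\delta-1-\log\delta)\to\infty$ as $\delta\downarrow0$ or $\delta\uparrow\infty$; since $p>n$, the two events involving $A+B\sim\chi^2_p$ contribute at most $e^{-cp}\le e^{-cn}$ and never dominate. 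Choosing $a$ small and $b$ large, and setting $\tilde c_1=a/b<1$ and $\tilde c_2=b/a>1$, forces both exponents above any prescribed $C$, which yields the two bounds $2e^{-Cn}$.

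The hard part will not be the concentration step, which is routine, but making the distributional identity watertight and quantifying it uniformly in $C$: one must check carefully that the invariance of the uniform measure on $V_{n,p}$ genuinely transfers to $UU^\top$ and to the spherical mixture, and then verify that the rate $\phi(\delta)$ can be driven above an arbitrary target by pushing $\tilde c_1\downarrow0$ and $\tilde c_2\uparrow\infty$ while the centering stays pinned at $n/p$.
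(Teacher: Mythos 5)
Your proof is correct, but there is nothing in the paper itself to compare it against: the proposition is stated with the attribution to \cite{HOLP} (their Proposition~2) and is imported as a known result, with no proof given in the appendix. So the relevant comparison is with the route that the paper's own preliminaries are assembled for and that the cited source follows, namely the Gaussian representation: realize $U$ as the orientation of a standard Gaussian matrix (Lemma~\ref{fan.svd} or Lemma~\ref{macg} with $\Sigma=I_p$), write $\bm e_1^\top UU^\top\bm e_1$ as a Gaussian quadratic form, reduce it by a Sherman--Morrison step to a ratio, and control it with Wishart extreme-eigenvalue bounds (Lemma~\ref{fan.l1}) plus sub-Gaussian concentration (Proposition~\ref{subineq}); that route delivers the centering $n/p$ only up to multiplicative constants. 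Your argument is genuinely different and cleaner: left-invariance plus rotational symmetry of the uniform point $\bm w\in S^{p-1}$ gives the exact identity $\bm e_1^\top UU^\top\bm e_1\stackrel{d}{=}A/(A+B)$ with $A\sim\chi^2_n$ independent of $B\sim\chi^2_{p-n}$, i.e.\ an exact $\mathrm{Beta}(n/2,(p-n)/2)$ law with mean exactly $n/p$, after which the two tails follow from elementary chi-square Chernoff bounds. The pieces all check out: the mixing step (conditioning on $U$, rotating the rank-$n$ projector $UU^\top$ to the coordinate projector) is valid; the deterministic inclusions are correct, since $A\geq an$ and $A+B\leq bp$ force $A/(A+B)\geq (a/b)(n/p)$, and symmetrically for the upper tail; the rate $\phi(\delta)=\tfrac12(\delta-1-\log\delta)$ diverges as $\delta\downarrow 0$ and $\delta\uparrow\infty$, so for any prescribed $C$ you can choose $a,b$ depending only on $C$ with $\phi(a),\phi(b)>C$, and the $\chi^2_p$ events are dominated because $p>n$; taking $\tilde c_1=a/b<1<\tilde c_2=b/a$ then gives both bounds $<2e^{-Cn}$. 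What your approach buys is an exact distributional identity and no eigenvalue machinery; what the Gaussian/Wishart route buys is uniformity of technique, since the same tools are needed anyway for the off-diagonal result (Lemma~\ref{l.off}), where no exact law is available. One cosmetic simplification: the auxiliary Haar rotation $Q$ is dispensable, because $U$ can be taken as the first $n$ columns of a Haar orthogonal matrix, so $U^\top\bm e_1$ is directly the first $n$ coordinates of a uniform point on $S^{p-1}$.
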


In addition, the proofs of main theorems also rely on the following results concerning the normal and sub-Gaussian distributions.

\begin{defi}[\cite{manifold}, Page 23]\label{mnorm}
An $n\times p$ random matrix $Z$ is said to follow the rectangular matrix-variate standard normal distribution $N_{n,p}(0;I_n,I_p)$ if it has the density function
\[\varphi^{(n,p)}(Z)=\frac{1}{(2\pi)^{np/2}}\text{\rm{etr}}(-\frac{1}{2}Z^\top Z),\]
where $\text{\rm{etr}}(\cdot)$ denotes $\exp(\text{\rm{trace}}(\cdot))$. Equivalently, the elements of the matrix $Z$ are independent and identically distributed as $N(0,1)$. The $n\times p$ random matrix $W$ is said to follow the normal $N_{n,p}(M;\Sigma_1,\Sigma_2)$ distribution if it can be written as
\[W=\Sigma_1^{\frac{1}{2}}Z\Sigma_2^{\frac{1}{2}}+M,\]
where $Z\sim N_{n,p}(0;I_n,I_p)$, $M$ is an $n\times p$ matrix and $\Sigma_1$ and $\Sigma_2$ are $n\times n$ and $p\times p$ positive definite matrices. The density function of $W$ can be written as
\[\varphi^{(n,p)}(W-M;\Sigma_1,\Sigma_2)=|\Sigma_1|^{-p/2}|\Sigma_2|^{-n/2}\varphi^{(n,p)}[\Sigma_1^{-\frac{1}{2}}(W-M)\Sigma_2^{-\frac{1}{2}}].\]
\end{defi}
\begin{rmk}\label{rmk1}
If matrix $Z$ follows the rectangular matrix-variate standard normal distribution $N_{n,p}(0;I_n,I_p)$ with the singular value decomposition $Z=VD U^\top $, then from Lemma \ref{fan.svd} we know that $U$ is uniformly distributed on $V_{n,p}$.
\end{rmk}

\begin{lem}[\cite{SIS}, Lemma 6 and 7]\label{fan.l1}
Suppose $n\times p$ matrix $Z$ follows the matrix-variate normal distribution $N_{n,p}(0;I_n,I_p)$. Then there exist some $\widetilde c_\lambda>1$ and $\widetilde C_\lambda>0$ such that
\[
P\{\lambda_{\max}(p^{-1}ZZ^\top )>\widetilde c_\lambda\,\,\text{or}\,\,\lambda_{\min}(p^{-1}ZZ^\top )<\widetilde c_\lambda^{-1}\}\leq \exp(-\widetilde C_\lambda n).
\]
\end{lem}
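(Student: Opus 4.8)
The plan is to reduce the claim to concentration of the extreme singular values of $p^{-1/2}Z$ and to control them by an $\epsilon$-net argument over the unit sphere. The eigenvalues of $p^{-1}ZZ^\top$ are the squared singular values of $p^{-1/2}Z$, so by the variational characterization
\[
\lambda_{\max}(p^{-1}ZZ^\top)=\max_{\bm u\in S^{n-1}}\frac{1}{p}\|Z^\top\bm u\|^2,\qquad \lambda_{\min}(p^{-1}ZZ^\top)=\min_{\bm u\in S^{n-1}}\frac{1}{p}\|Z^\top\bm u\|^2,
\]
where $S^{n-1}$ denotes the unit sphere in $\mathbb{R}^n$. The key simplification is that for a \emph{fixed} unit vector $\bm u$, the coordinates of $Z^\top\bm u$ are the inner products of $\bm u$ with the i.i.d.\ $N(0,I_n)$ columns of $Z$; since $\|\bm u\|=1$ these are i.i.d.\ $N(0,1)$, so $\|Z^\top\bm u\|^2\sim\chi^2_p$.

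First I would invoke a standard chi-square concentration inequality (for instance the Laurent--Massart bounds) to show that for any fixed $\bm u\in S^{n-1}$ and any fixed $s\in(0,1)$,
\[
P\!\left(\frac{1}{p}\|Z^\top\bm u\|^2>1+s\right)\le e^{-c_1 p s^2}\quad\text{and}\quad P\!\left(\frac{1}{p}\|Z^\top\bm u\|^2<1-s\right)\le e^{-c_1 p s^2}
\]
for an absolute constant $c_1>0$. The exponent $p s^2$ is what must eventually beat the covering entropy of the sphere.

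Next I would fix a $(1/4)$-net $\mathcal N$ of $S^{n-1}$ with $|\mathcal N|\le 9^{\,n}$, apply the tail bounds to each net point, and take a union bound, obtaining
\[
P\!\left(\max_{\bm u\in\mathcal N}\frac{1}{p}\|Z^\top\bm u\|^2>1+s\right)\le 9^{\,n}e^{-c_1 p s^2}=\exp\!\left(n\log 9-c_1 p s^2\right),
\]
and similarly for the lower tail. A net-to-sphere comparison then transfers control from $\mathcal N$ to all of $S^{n-1}$: writing $A=p^{-1/2}Z^\top$, one has $\|A\|_{\mathrm{op}}\le(1-\tfrac14)^{-1}\max_{\bm u\in\mathcal N}\|A\bm u\|$, bounding $\lambda_{\max}^{1/2}$, while $\sigma_{\min}(A)\ge\min_{\bm u\in\mathcal N}\|A\bm u\|-\tfrac14\|A\|_{\mathrm{op}}$ bounds $\lambda_{\min}^{1/2}$ \emph{once the upper bound on $\|A\|_{\mathrm{op}}$ is in hand}. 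Since $p\ge n$, the exponent satisfies $n\log 9-c_1 p s^2\le-(c_1 s^2-\log 9)n$, so fixing $s$ large enough that $c_1 s^2>\log 9$ makes the probability at most $\exp(-\widetilde C_\lambda n)$ for some $\widetilde C_\lambda>0$; the constant factor produced by the net-to-sphere step then fixes a single $\widetilde c_\lambda>1$ with $\lambda_{\max}\le\widetilde c_\lambda$ and $\lambda_{\min}\ge\widetilde c_\lambda^{-1}$ on the complementary event.

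The main obstacle is the competition between two exponential scales: the covering number of $S^{n-1}$ contributes a factor $e^{\Theta(n)}$, which must be dominated by the chi-square tail with exponent $\Theta(p s^2)$. This is precisely where $p\ge n$ (more comfortably, $p$ at least proportional to $n$, as in the screening regime) enters, and it forces $s$—and therefore $\widetilde c_\lambda$—to be chosen as an absolute constant rather than shrinking with $n$. A secondary subtlety is that $\lambda_{\min}$ cannot be handled in isolation, since the net-to-sphere inequality for the smallest singular value leaves a remainder proportional to $\|A\|_{\mathrm{op}}$; the two tail events must therefore be intersected and the bound on $\lambda_{\max}$ established first.
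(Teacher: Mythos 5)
Your overall architecture is sound and is the standard net proof of two-sided spectral bounds for Gaussian matrices: reduce to the extreme singular values of $p^{-1/2}Z^\top$, identify $\|Z^\top\bm u\|^2\sim\chi^2_p$ for each fixed direction, union-bound over a $1/4$-net of cardinality $9^n$, and transfer from the net to the sphere, correctly establishing the operator-norm bound first because the $\sigma_{\min}$ transfer leaves the remainder $\frac14\|A\|_{\mathrm{op}}$. For calibration: the paper itself contains no proof of this lemma --- it is quoted from \cite{SIS} (Lemmas 6 and 7), whose argument runs instead through deviation inequalities for the extreme singular values of a Gaussian matrix of Davidson--Szarek/Gordon type, namely $P\left(s_{\max}(Z)>\sqrt p+\sqrt n+t\right)\le e^{-t^2/2}$ and the analogous lower-tail bound for $s_{\min}$, i.e., Gaussian concentration of Lipschitz functions rather than nets. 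Your route is more elementary and self-contained, but it pays for that in constants, and that is exactly where it breaks.

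The genuine gap is the final step ``since $p\ge n$, fix $s$ large enough that $c_1 s^2>\log 9$.'' Your chi-square bounds require $s\in(0,1)$ (the lower tail needs $1-s>0$), while Laurent--Massart gives at best a small absolute constant, e.g.\ $P\left(\chi^2_p\le p(1-s)\right)\le e^{-ps^2/4}$, so $c_1s^2\le 1/4<\log 9\approx 2.2$ and no admissible $s$ exists; under the bare hypothesis $p\ge n$ the union bound does not close. Nor is this a removable artifact of sloppy constants: for $p$ close to $n$ the conclusion itself is false with constants independent of the aspect ratio, since for a nearly square Gaussian matrix $s_{\min}(Z)\asymp n^{-1/2}$ and hence $\lambda_{\min}(p^{-1}ZZ^\top)\to 0$ (Marchenko--Pastur gives the limit $(1-\sqrt{n/p}\,)^2$ in general). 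Note the asymmetry: the $\lambda_{\max}$ half survives with $p\ge n$ by taking $s$ large in the subexponential upper-tail regime, but the $\lambda_{\min}$ half cannot. To repair the proof you must either (i) invoke the regime in which the lemma is actually used here, $p\ge Kn$ with $K$ a sufficiently large absolute constant (in this paper $p$ is ultrahigh-dimensional, $p_d\ge p/2\gg n$), so that $n\log 9-c_1Kns^2<0$ with, say, $s=1/2$, after which your argument closes verbatim with explicit $\widetilde c_\lambda$; or (ii) replace the chi-square-plus-net input by the sharper singular-value deviation inequalities above, which yield the two-sided bound with $\widetilde c_\lambda$ depending only on an upper bound $\gamma<1$ for $n/p$ --- which is what the cited source does. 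Your parenthetical ``more comfortably, $p$ at least proportional to $n$'' shows you sensed the issue, but proportionality with a constant near $1$ is precisely the failure point; the proportionality constant (or the concentration input) must be strengthened. Everything else --- the $\chi^2_p$ identification, the covering number, the intersection and ordering of the two tail events --- is correct.
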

\begin{lem}[\cite{mathsta}, Theorem B.6.5]\label{cdist}
Let $\bm{z}\in\mathbb{R}^p$ follows the multivariate normal distribution $N(\bm{\mu},\Sigma)$ with partitions
\[\bm{z}=\begin{bmatrix}\bm{z}_{1}\\\bm{z}_{2}\end{bmatrix},\quad \bm{\mu}=\begin{bmatrix}\bm{\mu}_{1}\\\bm{\mu}_{2}\end{bmatrix}\quad\text{and}\quad \Sigma=\begin{bmatrix}\Sigma_{11}&\Sigma_{12}\\\Sigma_{21}&\Sigma_{22}\end{bmatrix},\]
where $\bm{z}_{1}$ and $\bm{\mu}_{1}$ are $p_1$-dimensional vectors, $\bm{z}_{2}$ and $\bm{\mu}_{2}$ are $p_2$-dimensional vectors and $\Sigma_{11}$ and $\Sigma_{22}$ are $p_1\times p_1$ and $p_2\times p_2$ matrices with $p_1+p_2=p$. Then, if $\Sigma$ is positive definite, the conditional distribution of $\bm{z}_2$ conditioning on $\bm{z}_1$ can be given by
\[N(\bm{\mu}_2+\Sigma_{21}\Sigma_{11}^{-1}(\bm{z}_1-\bm{\mu}_1),\Sigma_{22\cdot2}),\]
where the covariance matrix $\Sigma_{22\cdot2}=\Sigma_{22}-\Sigma_{21}\Sigma_{11}^{-1}\Sigma_{12}$ is also positive definite.
\end{lem}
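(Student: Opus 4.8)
The plan is to prove the result by the \emph{regression residual} construction, which sidesteps any direct manipulation of the joint density. Since $\Sigma$ is positive definite, its leading block $\Sigma_{11}$ is invertible, so I may define the residual vector
\[
\bm{w}=\bm{z}_2-\Sigma_{21}\Sigma_{11}^{-1}(\bm{z}_1-\bm{\mu}_1).
\]
Because $(\bm{w}^\top,\bm{z}_1^\top)^\top$ is an affine image of the Gaussian vector $\bm{z}$, it is itself jointly Gaussian. I would then read off $\mathrm{E}[\bm{w}]=\bm{\mu}_2$ directly and compute the cross-covariance
\[
\mathrm{Cov}(\bm{z}_1,\bm{w})=\Sigma_{12}-\Sigma_{11}\Sigma_{11}^{-1}\Sigma_{12}=0,
\]
where symmetry of $\Sigma$ is used to replace $\Sigma_{21}^\top$ by $\Sigma_{12}$. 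For a jointly Gaussian pair, vanishing cross-covariance is equivalent to independence, so $\bm{w}$ is independent of $\bm{z}_1$.

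Next I would identify the marginal law of $\bm{w}$. Expanding $\mathrm{Var}(\bm{w})$ and cancelling the two equal cross terms against the quadratic term yields $\mathrm{Var}(\bm{w})=\Sigma_{22}-\Sigma_{21}\Sigma_{11}^{-1}\Sigma_{12}=\Sigma_{22\cdot2}$, hence $\bm{w}\sim N(\bm{\mu}_2,\Sigma_{22\cdot2})$. Since $\bm{w}$ and $\bm{z}_1$ are independent, the conditional law of $\bm{w}$ given $\bm{z}_1$ coincides with this marginal. Finally, writing $\bm{z}_2=\bm{w}+\Sigma_{21}\Sigma_{11}^{-1}(\bm{z}_1-\bm{\mu}_1)$ and observing that the second summand becomes a fixed vector once $\bm{z}_1$ is conditioned upon, I conclude that the conditional distribution of $\bm{z}_2$ given $\bm{z}_1$ is the asserted $N(\bm{\mu}_2+\Sigma_{21}\Sigma_{11}^{-1}(\bm{z}_1-\bm{\mu}_1),\Sigma_{22\cdot2})$.

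It remains to show $\Sigma_{22\cdot2}$ is positive definite, which I would obtain from the congruence (block Gaussian elimination) identity: with the nonsingular lower-triangular block matrix $L$ one has
\[
L=\begin{bmatrix}I_{p_1}&0\\-\Sigma_{21}\Sigma_{11}^{-1}&I_{p_2}\end{bmatrix},\qquad L\,\Sigma\,L^\top=\begin{bmatrix}\Sigma_{11}&0\\0&\Sigma_{22\cdot2}\end{bmatrix}.
\]
Since $\Sigma\succ0$ and $L$ is invertible, $L\Sigma L^\top\succ0$, and a positive definite matrix has positive definite diagonal blocks, so $\Sigma_{22\cdot2}\succ0$. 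Equivalently, $\Sigma_{22\cdot2}=\mathrm{Var}(\bm{w})$ is a covariance matrix, and any direction $\bm{v}$ in its kernel would force $\bm{v}^\top\bm{z}_2$ to be an affine function of $\bm{z}_1$, contradicting $\Sigma\succ0$.

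Because every step reduces to a finite-dimensional linear-algebra identity, there is no genuine analytic obstacle here; the only points requiring care are invoking joint—rather than merely marginal—Gaussianity when passing from zero covariance to independence, and tracking the block dimensions so that $\Sigma_{21}\Sigma_{11}^{-1}$ is $p_2\times p_1$ and each transpose lands on the correct block. An alternative would factor the joint density directly by completing the square in $\bm{z}_2$, reproducing the same conditional mean and the Schur-complement covariance, but the residual construction is shorter and makes the independence structure transparent.
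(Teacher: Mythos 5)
Your proof is correct. Note, however, that the paper itself offers no proof of this statement: it is imported verbatim as Theorem B.6.5 of the cited mathematical statistics text and used as a black box in the proof of Proposition 2 (the distribution of $W$), so there is no in-paper argument to compare yours against. Your regression-residual construction --- setting $\bm{w}=\bm{z}_2-\Sigma_{21}\Sigma_{11}^{-1}(\bm{z}_1-\bm{\mu}_1)$, checking $\mathrm{Cov}(\bm{z}_1,\bm{w})=0$, and invoking the equivalence of zero cross-covariance and independence under \emph{joint} Gaussianity (which you correctly flag as the one step that needs the affine-image argument, not merely marginal normality) --- is the standard textbook route, and your variance computation correctly yields the Schur complement $\Sigma_{22\cdot2}$. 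The positive-definiteness claim is also handled cleanly: the congruence $L\Sigma L^\top=\mathrm{diag}(\Sigma_{11},\Sigma_{22\cdot2})$ with invertible $L$ is exactly the right tool, and your backup kernel argument (a null direction $\bm{v}$ of $\Sigma_{22\cdot2}$ would make $\bm{v}^\top\bm{z}_2$ an affine function of $\bm{z}_1$, placing a nonzero vector in the kernel of $\Sigma$) is equally valid. In short: the statement is a classical fact the paper cites rather than proves, and your writeup would serve as a correct, self-contained proof of it.
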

\begin{prop}[\cite{ver}, Proposition 5.10]\label{subineq}
Let $\{\xi_i\}_{i=1}^{n}$ be a sequence of i.i.d sub-Gaussian distributed random variables with mean $0$ and a finite variance. Then, there exists a positive constant $C_\xi$ depending on the distribution of $\xi_i$, such that for any vector $\bm a=(a_1,\cdots,a_n)^\top \in\mathbb{R}^n$ with $||\bm a||^2=1$ and every $z\geq 0$,
\[P\left(\left|\sum_{i=1}^n a_i\xi_i\right|\geq z\right)\leq e\cdot\exp\left\{-{C_\xi\cdot z^2}\right\}.\]
\end{prop}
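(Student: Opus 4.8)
The plan is to prove the inequality via the moment-generating function (MGF) characterization of sub-Gaussianity combined with a Chernoff argument, the key structural point being that mean-zero sub-Gaussianity is inherited by the normalized weighted sum $S=\sum_{i=1}^n a_i\xi_i$ with a parameter independent of $n$. The starting ingredient I would record is the exponential-moment bound for a single centered sub-Gaussian variable: there exists a constant $c>0$, depending only on the common distribution of the $\xi_i$, such that $\mathrm{E}[\exp(\lambda\xi_i)]\le\exp(c\lambda^2)$ for all $\lambda\in\mathbb{R}$. Here the mean-zero assumption is what eliminates the linear-in-$\lambda$ term from the exponent, and the finite-variance assumption ensures the relevant moments exist.

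Next, exploiting the independence of the $\xi_i$, I would factorize the MGF of $S$ and use the normalization $\|\bm a\|^2=1$:
\[\mathrm{E}[\exp(\lambda S)]=\prod_{i=1}^n\mathrm{E}[\exp(\lambda a_i\xi_i)]\le\prod_{i=1}^n\exp(c\lambda^2 a_i^2)=\exp\left(c\lambda^2\sum_{i=1}^n a_i^2\right)=\exp(c\lambda^2).\]
This shows that $S$ is again centered sub-Gaussian with the same parameter $c$, crucially uniformly in the dimension $n$.

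I would then apply Markov's inequality to $\exp(\lambda S)$: for any $z\ge 0$ and $\lambda>0$, $P(S\ge z)\le\exp(-\lambda z)\,\mathrm{E}[\exp(\lambda S)]\le\exp(-\lambda z+c\lambda^2)$, and the choice $\lambda=z/(2c)$ optimizes the exponent to give $P(S\ge z)\le\exp(-z^2/(4c))$. Repeating the argument with $-S$ in place of $S$ and adding the two bounds yields $P(|S|\ge z)\le 2\exp(-z^2/(4c))$. Setting $C_\xi=1/(4c)$ and using $2\le e$ then produces exactly the asserted inequality $P(|S|\ge z)\le e\cdot\exp(-C_\xi z^2)$.

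The one genuinely delicate step is the first, namely deriving the uniform exponential-moment bound $\mathrm{E}[\exp(\lambda\xi_i)]\le\exp(c\lambda^2)$ from the definition of \emph{sub-Gaussian}; the remaining Chernoff bookkeeping is routine. I expect this to be the main obstacle mainly because sub-Gaussianity is specified through tail behavior, so one must translate that into an MGF bound. I would do so by expanding $\exp(\lambda\xi_i)$ as a power series, bounding $\mathrm{E}|\xi_i|^p$ through the moment estimate $(\mathrm{E}|\xi_i|^p)^{1/p}\le K\sqrt{p}$ available for sub-Gaussian variables, discarding the first-order term by centering, and summing the resulting series to obtain a bound of the stated $\exp(c\lambda^2)$ form.
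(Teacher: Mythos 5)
Your Chernoff/MGF argument is correct and coincides with the standard proof of this statement: the paper itself gives no proof (Proposition~\ref{subineq} is imported verbatim from the cited reference \citep{ver}), and the proof there proceeds exactly as you do, showing that the unit-norm weighted sum of independent centered sub-Gaussian variables is again centered sub-Gaussian with a dimension-free parameter via the single-variable bound $\mathrm{E}[\exp(\lambda\xi_i)]\le\exp(c\lambda^2)$ and the factorization $\mathrm{E}[\exp(\lambda S)]\le\exp(c\lambda^2\|\bm a\|^2)$, then optimizing the exponential Markov inequality and absorbing the two-sided factor $2\le e$. Your closing diagnosis is also accurate: the only nontrivial ingredient is the tail-to-MGF translation for a centered sub-Gaussian variable, which is the content of the equivalence lemma in the cited reference, with the single caveat that the Taylor-series summation you sketch yields $\exp(c\lambda^2)$ directly only for $|\lambda|$ below a distribution-dependent threshold, and for large $|\lambda|$ one finishes with the standard step $\lambda\xi\le c\lambda^2+\xi^2/(4c)$ combined with finiteness of $\mathrm{E}\exp(\xi^2/K^2)$.
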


\section{Proof of the main theorems}
Recall that the COLP estimator can be written as
\begin{equation}\label{est1}
\hat{\bm{\beta}}_{\mathcal{D}}=({M}_{\mathcal{C}}{X}_{\mathcal{D}})^+{Y}=({M}_{\mathcal{C}}{X}_{\mathcal{D}})^+{X}{\bm\beta}+({M}_{\mathcal{C}}{X}_{\mathcal{D}})^+{\bm\epsilon},
\end{equation}
where ${M}_{\mathcal{C}}=I_n-{X}_{\mathcal{C}}({X}_{\mathcal{C}}^\top {X}_{\mathcal{C}})^{-1}{X}_{\mathcal{C}}^\top $ and $({M}_{\mathcal{C}}{X}_{\mathcal{D}})^+$ denotes the Moore-Penrose inverse of ${M}_{\mathcal{C}}{X}_{\mathcal{D}}$. Let $C(X_{\mathcal{C}})$ denote the space spanned by the columns of $X_{\mathcal{C}}$ and $C(X_{\mathcal{C}})^{\perp}$ denote its orthogonal complement. With $n_d=n-t_c$, suppose that columns of the matrix $Q_{\mathcal{C}}\in V_{n,n_d}$ form a set of orthonormal basis of the space $C(X_{\mathcal{C}})^{\perp}$. Then we have
\[{X}^\top _{\mathcal{C}}Q_{\mathcal{C}}=\bm{0}\quad\text{and}\quad {M}_{\mathcal{C}}=Q_{\mathcal{C}}Q_{\mathcal{C}}^\top .\]
Moreover, since the matrix $Q_{\mathcal{C}}^\top {X}_{\mathcal{D}}$ is of full row rank, its Moore-Penrose inverse can be written explicitly as
\[(Q_{\mathcal{C}}^\top {X}_{\mathcal{D}})^+={X}^\top _{\mathcal{D}}Q_{\mathcal{C}}(Q_{\mathcal{C}}^\top {X}_{\mathcal{D}}{X}^\top _{\mathcal{D}}Q_{\mathcal{C}})^{-1}.\]
Consequently, the Moore-Penrose inverse $({M}_{\mathcal{C}}{X}_{\mathcal{D}})^+$ can be expressed as
\[({M}_{\mathcal{C}}{X}_{\mathcal{D}})^+=(Q_{\mathcal{C}}^\top {X}_{\mathcal{D}})^+Q_{\mathcal{C}}^\top ={X}^\top _{\mathcal{D}}Q_{\mathcal{C}}(Q_{\mathcal{C}}^\top {X}_{\mathcal{D}}{X}^\top _{\mathcal{D}}Q_{\mathcal{C}})^{-1}Q_{\mathcal{C}}^\top ,\]
where the first equation comes from the facts that $Q_{\mathcal{C}}^+=Q_{\mathcal{C}}^\top $ and $(AB)^+=B^+A^+$ for any matrix $A$ with orthonormal columns. Denoting $W={X}^\top _{\mathcal{D}}Q_{\mathcal{C}}$ and $H_W=W(W^\top W)^{-1/2}$ as its orientation, we can write \eqref{est1} as
\begin{equation}\label{est2}
\hat{\bm\beta}_{\mathcal{D}}=H_WH_W^\top \bm\beta_{\mathcal{D}}+W(W^\top W)^{-1}Q_{\mathcal{C}}^\top \bm\epsilon.
\end{equation}
The main idea of our proofs is to show that $|\hat\beta_i|>|\hat\beta_j|$ with an overwhelming probability for any $i\in\mathcal{T}_\mathcal{D}$ and $j\not\in\mathcal{T}_\mathcal{D}$. To achieve this result, we evaluate the two terms on the right-hand side of \eqref{est2} separately based on the distributions of random matrix $W$ and its orientation $H_W$.
\begin{prop}\label{dist}
Under assumptions (A1) and (A3), the random matrix $W$ follows the rectangular matrix-variate normal distribution $N_{p_d,n_d}(0;\Sigma_{22\cdot2},I_{n_d})$ and its orientation $H_W$ follows the matrix angular central Gaussian distribution MACG$(\Sigma_{22\cdot2})$ with $\Sigma_{22\cdot2}=\Sigma_{22}-\Sigma_{21}\Sigma_{11}^{-1}\Sigma_{12}$.
\end{prop}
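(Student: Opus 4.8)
The plan is to condition on the submatrix $X_{\mathcal{C}}$ and exploit the fact that, once $X_{\mathcal{C}}$ is fixed, the basis matrix $Q_{\mathcal{C}}$ becomes deterministic while the columns orthogonal to $C(X_{\mathcal{C}})$ annihilate the conditional mean of $X_{\mathcal{D}}$. First I would invoke assumption (A1) together with Lemma \ref{cdist}: since the rows of $X$ are i.i.d.\ copies of the centered Gaussian vector $\bm x$, the conditional law of each row of $X_{\mathcal{D}}$ given $X_{\mathcal{C}}$ is $N(\Sigma_{21}\Sigma_{11}^{-1}\bm{x}_{\mathcal{C}},\Sigma_{22\cdot2})$, so that in matrix form
\[
X_{\mathcal{D}}\mid X_{\mathcal{C}}\sim N_{n,p_d}\!\left(X_{\mathcal{C}}\Sigma_{11}^{-1}\Sigma_{12};\,I_n,\,\Sigma_{22\cdot2}\right).
\]
Writing $X_{\mathcal{D}}=X_{\mathcal{C}}\Sigma_{11}^{-1}\Sigma_{12}+E$ with $E\mid X_{\mathcal{C}}\sim N_{n,p_d}(0;I_n,\Sigma_{22\cdot2})$, I would then compute $W=X_{\mathcal{D}}^\top Q_{\mathcal{C}}=\Sigma_{21}\Sigma_{11}^{-1}X_{\mathcal{C}}^\top Q_{\mathcal{C}}+E^\top Q_{\mathcal{C}}$ and use the defining identity $X_{\mathcal{C}}^\top Q_{\mathcal{C}}=\bm 0$ to reduce it to $W=E^\top Q_{\mathcal{C}}$.

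Next I would transfer the distribution through the transpose and the right multiplication by $Q_{\mathcal{C}}$. From Definition \ref{mnorm} the transpose property gives $E^\top\mid X_{\mathcal{C}}\sim N_{p_d,n}(0;\Sigma_{22\cdot2},I_n)$, and the standard transformation rule for matrix-variate normals (a direct consequence of the representation in Definition \ref{mnorm}) yields $E^\top Q_{\mathcal{C}}\mid X_{\mathcal{C}}\sim N_{p_d,n_d}(0;\Sigma_{22\cdot2},Q_{\mathcal{C}}^\top Q_{\mathcal{C}})$. Because $Q_{\mathcal{C}}\in V_{n,n_d}$ has orthonormal columns, $Q_{\mathcal{C}}^\top Q_{\mathcal{C}}=I_{n_d}$, so the conditional law of $W$ is exactly $N_{p_d,n_d}(0;\Sigma_{22\cdot2},I_{n_d})$. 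The crucial observation is that this conditional distribution no longer depends on $X_{\mathcal{C}}$, neither through its mean nor through either covariance factor; hence averaging over $X_{\mathcal{C}}$ shows that the unconditional law of $W$ is the same $N_{p_d,n_d}(0;\Sigma_{22\cdot2},I_{n_d})$, which settles the first claim.

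For the orientation I would apply Lemma \ref{macg} with $\Sigma=\Sigma_{22\cdot2}$, checking its two hypotheses. First, from Definition \ref{mnorm} the density of $W$ equals $|\Sigma_{22\cdot2}|^{-n_d/2}(2\pi)^{-p_dn_d/2}\,\mathrm{etr}(-\tfrac12 W^\top\Sigma_{22\cdot2}^{-1}W)$, which has the required form $|\Sigma_{22\cdot2}|^{-n_d/2}g(W^\top\Sigma_{22\cdot2}^{-1}W)$. Second, for any $O\in\mathcal{O}(n_d)$ the same transformation rule gives $WO\sim N_{p_d,n_d}(0;\Sigma_{22\cdot2},O^\top O)=N_{p_d,n_d}(0;\Sigma_{22\cdot2},I_{n_d})$, so the law of $W$ is invariant under right orthogonal transformations. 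Lemma \ref{macg} then delivers $H_W\sim\mathrm{MACG}(\Sigma_{22\cdot2})$.

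The only genuinely delicate point is the conditioning step: one must recognize that $Q_{\mathcal{C}}$ is a measurable function of $X_{\mathcal{C}}$, so that it is frozen once we condition, and that full column rank of $X_{\mathcal{C}}$ (hence the existence of $Q_{\mathcal{C}}$ with $n_d=n-t_c$ orthonormal columns spanning $C(X_{\mathcal{C}})^{\perp}$) holds almost surely because the Gaussian rows are non-degenerate. Granting this, the annihilation $X_{\mathcal{C}}^\top Q_{\mathcal{C}}=\bm 0$ removes the $X_{\mathcal{C}}$-dependent mean, and the remainder is bookkeeping with the matrix-normal transformation laws; notably, no concentration or tail estimates are required at this stage.
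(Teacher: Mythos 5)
Your proposal is correct and follows essentially the same route as the paper's proof: condition on $X_{\mathcal{C}}$, apply Lemma \ref{cdist} to get the conditional Gaussian law of $X_{\mathcal{D}}$, use $X_{\mathcal{C}}^\top Q_{\mathcal{C}}=\bm 0$ to annihilate the conditional mean and the orthonormality of $Q_{\mathcal{C}}$ to reduce the column covariance to $I_{n_d}$, observe that the resulting conditional law is free of $X_{\mathcal{C}}$, and then invoke Lemma \ref{macg} via right orthogonal invariance of the density. The only cosmetic difference is that you phrase the reduction through matrix-normal transformation rules ($E^\top Q_{\mathcal{C}}\sim N_{p_d,n_d}(0;\Sigma_{22\cdot2},Q_{\mathcal{C}}^\top Q_{\mathcal{C}})$) where the paper writes the same fact as $\widetilde Z^\top Q_{\mathcal{C}}\overset{d}{=}Z$ for a standard normal matrix $Z$.
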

\begin{proof}[Proof of Proposition \ref{dist}]
Under assumption (A3), the covariance matrix $\Sigma$ is positive definite since $\text{cond}(\Sigma)$ is bounded from above and then so are $\Sigma_{11}$ and $\Sigma_{22\cdot 2}$. Denote ${X}^\top _{i\mathcal{C}}$ and ${X}^\top _{i\mathcal{D}}$ as the $i$-th row of ${X}_{\mathcal{C}}$ and ${X}_{\mathcal{D}}$, respectively. According to assumption (A1) and Lemma \ref{cdist}, we have
\[{X}_{i\mathcal{D}}|{X}_{i\mathcal{C}}\overset{d}{=}\Sigma^{\frac{1}{2}}_{22\cdot2}{\widetilde Z}_{i}+\Sigma_{21}\Sigma_{11}^{-1}{X}_{i\mathcal{C}},\]
where ${\widetilde Z}_i$s are $p_d$-dimensional i.i.d random vectors from the standard multivariate normal distribution. Consequently, for random matrix ${X}_{\mathcal{D}}$, we have
\[{X}_{\mathcal{D}}|{X}_{\mathcal{C}}\overset{d}{=}{\widetilde Z}\Sigma^{\frac{1}{2}}_{22\cdot2}+{X}_{\mathcal{C}}\Sigma_{11}^{-1}\Sigma_{12},\]
where ${\widetilde Z}$ is an $n\times p_d$ random matrix with i.i.d elements from the standard normal distribution.

Recall that columns of $Q_{\mathcal{C}}$ form a set of orthonormal basis of the space $C({X}_{\mathcal{C}})^{\perp}$ satisfying $Q_{\mathcal{C}}\in V_{n_d,n}$ and ${X}^\top _{\mathcal{C}}{Q}_{\mathcal{C}}=0$. Then, for matrix $W={X}^\top _{\mathcal{D}}Q_{\mathcal{C}}$, we have
\[W|{X}_{\mathcal{C}}\overset{d}{=}\Sigma^{\frac{1}{2}}_{22\cdot2}\widetilde{Z}^\top Q_{\mathcal{C}}\overset{d}{=}\Sigma^{\frac{1}{2}}_{22\cdot2}Z,\]
where $Z$ is a $p_d\times n_d$ random matrix with i.i.d elements from $N(0,1)$ regardless of the choice of $Q_{\mathcal{C}}$. Therefore, $W|{X}_{\mathcal{C}}$ is independent of ${X}_{\mathcal{C}}$ and it follows the rectangular matrix-variate normal distribution $N_{p_d,n_d}(0;\Sigma_{22\cdot2},I_{n_d})$ according to Definition \ref{mnorm}, indicating that $W$ follows the same distribution with the density function
\[f(W)=\frac{1}{(2\pi)^{p_dn_d/2}}\cdot|\Sigma_{22\cdot2}|^{-n_d/2}\rm{etr}(-\frac{1}{2}W^\top \Sigma^{-1}_{22\cdot2}W).\]
Furthermore, notice that $f(W)=f(WQ)$ for any $Q\in\mathcal{O}(n_d)$. Then, from Lemma \ref{macg}, we obtain that $H_W$ follows the MACG($\Sigma_{22\cdot2}$) distribution on $V_{n_d, p_d}$.
\end{proof}
For eigenvalues of $\Sigma$ and $\Sigma_{22\cdot2}$, we have the following result.
\begin{prop}\label{eigen}
Under assumptions (A1) and (A3), we have
\[1/\mathrm{cond}(\Sigma)\leq\lambda_{\text{min}}(\Sigma)\leq \lambda_{\text{min}}(\Sigma_{22\cdot2})\leq \lambda_{\text{max}}(\Sigma_{22\cdot2})\leq \lambda_{\text{max}}(\Sigma)\leq \mathrm{cond}(\Sigma).\]
\end{prop}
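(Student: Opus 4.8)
The plan is to split the chain of five inequalities into three pieces: the two outer inequalities comparing $\lambda_{\text{min}}(\Sigma)$ and $\lambda_{\text{max}}(\Sigma)$ with $\mathrm{cond}(\Sigma)$, the trivial middle inequality $\lambda_{\text{min}}(\Sigma_{22\cdot2})\le\lambda_{\text{max}}(\Sigma_{22\cdot2})$, and the two inner inequalities relating the eigenvalues of the Schur complement $\Sigma_{22\cdot2}$ to those of $\Sigma$. The two outer ones are elementary consequences of the unit-diagonal structure, while the inner pair is the substantive step and relies on viewing $\Sigma_{22\cdot2}^{-1}$ as a principal submatrix of $\Sigma^{-1}$.

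For the outer inequalities I would first note that assumption (A3) forces $\Sigma$ to be positive definite, since its condition number is finite, so all its eigenvalues are positive. Assumption (A1) guarantees that every diagonal entry of $\Sigma$ equals $1$, because both $\Sigma_{11}$ and $\Sigma_{22}$ have unit diagonal. Hence $\mathrm{trace}(\Sigma)=p$, and as $\Sigma$ has $p$ eigenvalues whose average is therefore $1$, one gets $\lambda_{\text{min}}(\Sigma)\le 1\le\lambda_{\text{max}}(\Sigma)$. Combining these two bounds with the defining identity $\lambda_{\text{max}}(\Sigma)=\mathrm{cond}(\Sigma)\cdot\lambda_{\text{min}}(\Sigma)$ yields at once $\lambda_{\text{max}}(\Sigma)\le\mathrm{cond}(\Sigma)$ and $\lambda_{\text{min}}(\Sigma)\ge 1/\mathrm{cond}(\Sigma)$, which are exactly the first and last inequalities of the statement.

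The heart of the argument is the pair of inner inequalities, for which I would exploit the fact that the Schur complement enters the block inverse: by the standard block-inversion formula, the lower-right $p_d\times p_d$ block of $\Sigma^{-1}$ is precisely $\Sigma_{22\cdot2}^{-1}$. Thus $\Sigma_{22\cdot2}^{-1}$ is a principal submatrix of the symmetric matrix $\Sigma^{-1}$, and the Courant--Fischer variational characterization (equivalently, Cauchy interlacing) gives $\lambda_{\text{min}}(\Sigma^{-1})\le\lambda_{\text{min}}(\Sigma_{22\cdot2}^{-1})$ and $\lambda_{\text{max}}(\Sigma_{22\cdot2}^{-1})\le\lambda_{\text{max}}(\Sigma^{-1})$. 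Rewriting these through $\lambda_{\text{max}}(A^{-1})=1/\lambda_{\text{min}}(A)$ and $\lambda_{\text{min}}(A^{-1})=1/\lambda_{\text{max}}(A)$ for positive definite $A$, and inverting (which reverses the ordering), produces $\lambda_{\text{min}}(\Sigma)\le\lambda_{\text{min}}(\Sigma_{22\cdot2})$ and $\lambda_{\text{max}}(\Sigma_{22\cdot2})\le\lambda_{\text{max}}(\Sigma)$. The middle inequality is immediate, and concatenating all the pieces gives the claim.

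I do not anticipate a genuine obstacle, as every tool is standard; the only point requiring care is bookkeeping the direction of the inequalities when passing to inverses, since the submatrix-of-the-inverse trick shifts the comparison from $\Sigma_{22\cdot2}$ to $\Sigma_{22\cdot2}^{-1}$ and each inversion flips the ordering. I would also verify the positive definiteness of $\Sigma_{22\cdot2}$, which legitimizes writing $\Sigma_{22\cdot2}^{-1}$; this was already recorded in Lemma \ref{cdist} and used in the proof of Proposition \ref{dist}.
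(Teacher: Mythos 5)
Your proposal is correct and follows essentially the same route as the paper's proof: the block-inversion formula identifying $\Sigma_{22\cdot2}^{-1}$ as the lower-right block of $\Sigma^{-1}$, the principal-submatrix eigenvalue comparison (which the paper uses implicitly and you justify via Courant--Fischer/interlacing), and the trace argument $\mathrm{trace}(\Sigma)=p$ giving $\lambda_{\text{min}}(\Sigma)\leq 1\leq\lambda_{\text{max}}(\Sigma)$ for the outer bounds. Your added care in checking positive definiteness of $\Sigma_{22\cdot2}$ and in tracking the direction of inequalities under inversion only makes explicit steps the paper leaves tacit.
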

\begin{proof}[Proof of Proposition \ref{eigen}]
According to the blockwise inverse formula \citep{binv}, we have
\[\Sigma^{-1}=\begin{bmatrix}\Sigma_{11}&\Sigma_{12}\\\Sigma_{21}&\Sigma_{22}\end{bmatrix}^{-1}=\begin{bmatrix}\Sigma_{11}^{-1}+\Sigma_{11}^{-1}\Sigma_{12}\Sigma_{22\cdot 2}^{-1}\Sigma_{21}\Sigma_{11}^{-1}&-\Sigma_{11}^{-1}\Sigma_{12}\Sigma_{22\cdot 2}^{-1}\\-\Sigma_{22\cdot 2}^{-1}\Sigma_{21}\Sigma_{11}^{-1}&\Sigma_{22\cdot 2}^{-1}\end{bmatrix}.\]
Consequently, we obtain that
\[\lambda_{\text{min}}(\Sigma^{-1})\leq \lambda_{\text{min}}(\Sigma_{22\cdot2}^{-1})\leq \lambda_{\text{max}}(\Sigma_{22\cdot2}^{-1})\leq \lambda_{\text{max}}(\Sigma^{-1}).\]
Equivalently, we have
\[\lambda_{\text{min}}(\Sigma)\leq \lambda_{\text{min}}(\Sigma_{22\cdot2})\leq \lambda_{\text{max}}(\Sigma_{22\cdot2})\leq \lambda_{\text{max}}(\Sigma).\]
Furthermore, notice that $\text{trace}(\Sigma)=\sum_{i=1}^{p}\lambda_i=p$, where $\lambda_i$s denote all the eigenvalues of $\Sigma$. It's obvious that $\lambda_{\text{min}}(\Sigma)\leq 1$ and $\lambda_{\text{max}}(\Sigma)\geq 1$ and thus we can obtain the final conclusion.
\end{proof}
In the next, we introduce two results concerning the quantities of diagonal and off-diagonal terms in $H_WH^\top _W$.
\begin{lem}\label{diag}
Under assumptions (A1) and (A3), for any constant $C>0$, there exist positive constants $0<c_1<1<c_2$, such that for any $i\in\{1,\cdots,p_d\}$,
\[P\left(\bm e_i^\top H_WH_W^\top \bm e_i< c_1\frac{n
^{1-\xi_\lambda}}{p}\right)<2e^{-Cn}\quad\text{and}\quad P\left(\bm e_i^\top H_WH_W^\top \bm e_i> c_2\frac{n^{1+\xi_\lambda}}{p}\right)<2e^{-Cn}.\]
\end{lem}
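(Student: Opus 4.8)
The plan is to reduce the anisotropic MACG problem to the isotropic (uniform) situation covered by Proposition \ref{u.tail}. By Proposition \ref{dist} we have $W\sim N_{p_d,n_d}(0;\Sigma_{22\cdot2},I_{n_d})$, so by Definition \ref{mnorm} we may write $W=\Sigma_{22\cdot2}^{1/2}Z$ with $Z\sim N_{p_d,n_d}(0;I_{p_d},I_{n_d})$. Inserting the polar decomposition $Z=H_Z T_Z^{1/2}$ and cancelling the $T_Z^{1/2}$ factors, a direct computation gives
\begin{equation*}
H_WH_W^\top=W(W^\top W)^{-1}W^\top=\Sigma_{22\cdot2}^{1/2}\,H_Z\,(H_Z^\top\Sigma_{22\cdot2}H_Z)^{-1}H_Z^\top\,\Sigma_{22\cdot2}^{1/2}.
\end{equation*}
By Lemma \ref{macg} with identity covariance (which reduces MACG$(I_{p_d})$ to the uniform law, cf.\ Remark \ref{rmk1} and Lemma \ref{fan.svd}), the orientation $U:=H_Z$ is uniformly distributed on $V_{n_d,p_d}$. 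Writing $\bm v_i=\Sigma_{22\cdot2}^{1/2}\bm e_i$, the diagonal term becomes $\bm e_i^\top H_WH_W^\top\bm e_i=\bm v_i^\top U(U^\top\Sigma_{22\cdot2}U)^{-1}U^\top\bm v_i$.

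Next I would sandwich the inner matrix by eigenvalues. Since $U^\top U=I_{n_d}$, we have $\lambda_{\text{min}}(\Sigma_{22\cdot2})I_{n_d}\preceq U^\top\Sigma_{22\cdot2}U\preceq\lambda_{\text{max}}(\Sigma_{22\cdot2})I_{n_d}$, whence
\begin{equation*}
\frac{\bm v_i^\top UU^\top\bm v_i}{\lambda_{\text{max}}(\Sigma_{22\cdot2})}\leq\bm e_i^\top H_WH_W^\top\bm e_i\leq\frac{\bm v_i^\top UU^\top\bm v_i}{\lambda_{\text{min}}(\Sigma_{22\cdot2})}.
\end{equation*}
Writing $\bm v_i=\|\bm v_i\|\,\hat{\bm v}_i$ with $\|\bm v_i\|^2=(\Sigma_{22\cdot2})_{ii}\in[\lambda_{\text{min}}(\Sigma_{22\cdot2}),\lambda_{\text{max}}(\Sigma_{22\cdot2})]$, the surrounding ratios collapse into $\mathrm{cond}(\Sigma_{22\cdot2})$ and its reciprocal, and by Proposition \ref{eigen} these are bounded by $\mathrm{cond}(\Sigma)\leq c_\lambda n^{\xi_\lambda}$ and $1/(c_\lambda n^{\xi_\lambda})$, respectively. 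Thus, up to a factor $n^{\pm\xi_\lambda}$, the problem reduces to controlling $\hat{\bm v}_i^\top UU^\top\hat{\bm v}_i$ for the fixed unit vector $\hat{\bm v}_i$.

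Finally, because the uniform law on $V_{n_d,p_d}$ is invariant under left orthogonal transformations, choosing $O\in\mathcal{O}(p_d)$ with $O\bm e_1=\hat{\bm v}_i$ shows $\hat{\bm v}_i^\top UU^\top\hat{\bm v}_i$ has the same distribution as $\bm e_1^\top UU^\top\bm e_1$, so Proposition \ref{u.tail} (with $n,p$ replaced by $n_d,p_d$) supplies $\tilde c_1 n_d/p_d$ and $\tilde c_2 n_d/p_d$ bounds, each failing with probability below $2e^{-C'n_d}$ for any prescribed $C'$. Combining with the eigenvalue factors yields lower and upper bounds proportional to $n^{-\xi_\lambda}\cdot n_d/p_d$ and $n^{\xi_\lambda}\cdot n_d/p_d$; since $t=o(n)$ forces $t_c=o(n)$, for large $n$ we have $n/2\leq n_d\leq n$ and $p/2\leq p_d\leq p$, so $n_d/p_d\asymp n/p$. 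Absorbing the resulting constants into $c_1,c_2$ and taking $C'=2C$ to turn $e^{-C'n_d}$ into $e^{-Cn}$ delivers the claim. The main obstacle is the bookkeeping in the reduction identity and in correctly transferring Proposition \ref{u.tail} from the canonical direction $\bm e_1$ to $\hat{\bm v}_i=\Sigma_{22\cdot2}^{1/2}\bm e_i/\|\Sigma_{22\cdot2}^{1/2}\bm e_i\|$ via left-invariance; once these are settled, the eigenvalue sandwiching and the $n_d\asymp n$, $p_d\asymp p$ conversions are routine.
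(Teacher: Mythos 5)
Your proof is correct and takes essentially the same route as the paper's: reduce $\bm e_i^\top H_WH_W^\top \bm e_i$ to $\bm v_i^\top U(U^\top\Sigma_{22\cdot2}U)^{-1}U^\top\bm v_i$ with $U$ uniform on $V_{n_d,p_d}$, sandwich by $\mathrm{cond}(\Sigma_{22\cdot2})^{\pm1}\leq c_\lambda n^{\xi_\lambda}$ via Proposition \ref{eigen}, rotate the fixed unit vector to $\bm e_1$ by left-invariance of the uniform law, and invoke Proposition \ref{u.tail} together with $n_d\geq n/2$, $p_d\leq p$. The only cosmetic differences are that you derive uniformity of the orientation through the polar decomposition and Lemma \ref{macg} (MACG with identity parameter) where the paper uses the SVD and Lemma \ref{fan.svd}, and that the paper proves the bound for an arbitrary unit vector $\bm v$ rather than only the coordinate directions.
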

\begin{proof}[Proof of Lemma \ref{diag}]
More generally, we prove the conclusion for any vector $\bm v\in \mathbb{R}^{p_d}$ with $||\bm v||=1$. Recall that
\begin{equation}\label{d1}
\bm v^\top H_WH_W^\top \bm v =\bm v^\top W(W^\top W)^{-1}W^\top \bm v \overset{d}{=} \bm v^\top \Sigma_{22\cdot2}^{1/2}Z(Z^\top \Sigma_{22\cdot2}Z)^{-1}Z^\top \Sigma_{22\cdot2}^{1/2}\bm v,
\end{equation}
where $Z$ follows the normal distribution $N_{p_d,n_d}(0;I_{p_d},I_{n_d})$. Suppose $Z^\top $ has the SVD as $Z^\top =VDU^\top $, where $V\in\mathcal{O}(n_d)$, $U\in V_{n_d,p_d}$ and $D$ is an $n_d\times n_d$ diagonal matrix. Then, \eqref{d1} can be written as
\[\bm v^\top H_WH_W^\top \bm v\overset{d}{=}\bm v^\top \Sigma_{22\cdot2}^{1/2}U(U^\top \Sigma_{22\cdot2}U)^{-1}U^\top \Sigma_{22\cdot2}^{1/2}\bm v,\]
where $U$ is uniformly distributed on $V_{n_d,p_d}$ according to Remark \ref{rmk1}. In addition, the vector $\Sigma_{22\cdot2}^{1/2}\bm v$ can be expressed as
\[\Sigma_{22\cdot2}^{1/2}\bm v=||\Sigma_{22\cdot2}^{1/2}\bm v||\cdot Q\bm e_1,\]
where $Q$ is some $p_d\times p_d$ orthogonal matrix. Consequently, we have
\begin{align}
\bm v^\top H_WH_W^\top \bm v&\overset{d}{=}||\Sigma_{22\cdot2}^{1/2}\bm v||^2\cdot \bm e_1^\top  Q^\top U(U^\top \Sigma_{22\cdot2}U)^{-1}U^\top Q\bm e_1\nonumber\\
&\overset{d}{=}||\Sigma_{22\cdot2}^{1/2}\bm v||^2\cdot \bm e_1^\top  \widetilde U(U^\top \Sigma_{22\cdot2}U)^{-1}\widetilde U^\top \bm e_1,\label{d2}
\end{align}
where $\widetilde U=Q^\top U$ is also uniformly distributed on $V_{n_d,p_d}$. For the norm term, we have
\begin{equation}\label{d3}
\lambda_{\rm{min}}(\Sigma_{22\cdot2})\leq ||\Sigma_{22\cdot2}^{1/2}\bm v||^2\leq \lambda_{\rm{max}}(\Sigma_{22\cdot2}).
\end{equation}
Furthermore, we have
\begin{align}
\lambda_{\rm{max}}^{-1}(\Sigma_{22\cdot2})||\widetilde U^\top \bm e_1||^2&\leq \lambda_{\rm{min}}((U^\top \Sigma_{22\cdot2}U)^{-1})||\widetilde U^\top \bm e_1||^2\leq\bm e_1^\top \widetilde U(U^\top \Sigma_{22\cdot2}U)^{-1}\widetilde U^\top \bm e_1\nonumber\\
&\leq \lambda_{\rm{max}}((U^\top \Sigma_{22\cdot2}U)^{-1})||\widetilde U^\top \bm e_1||^2 \leq \lambda_{\rm{min}}^{-1}(\Sigma_{22\cdot2})||\widetilde U^\top \bm e_1||^2.\label{d4}
\end{align}
Combining \eqref{d2}, \eqref{d3} and \eqref{d4}, we have
\[\frac{\lambda_{\rm{min}}(\Sigma_{22\cdot2})}{\lambda_{\rm{max}}(\Sigma_{22\cdot2})}||\widetilde U^\top \bm e_1||^2\leq \bm v^\top H_WH_W^\top \bm v\leq \frac{\lambda_{\rm{max}}(\Sigma_{22\cdot2})}{\lambda_{\rm{min}}(\Sigma_{22\cdot2})}||\widetilde U^\top \bm e_1||^2.\]
Under assumption (A3), we have $\text{cond}(\Sigma_{22\cdot2})\leq\text{cond}(\Sigma)\leq c_\lambda n^{\xi_\lambda}$ by Proposition \ref{eigen}. From Proposition \ref{u.tail}, for any constant $C>0$, there exist constants $\tilde c_1^*$ and $\tilde c_2^*$ with $0<\tilde c_{1}^*<1<\tilde c_{2}^*$, such that
\[P\left(||\widetilde U^\top \bm e_1||^2<\tilde c_1^*\frac{n_d}{p_d}\right)<2e^{-2Cn_d}\quad\text{and}\quad P\left(||\widetilde U^\top \bm e_1||^2>\tilde c_2^*\frac{n_d}{p_d}\right)<2e^{-2Cn_d}.\]
Combining with the fact that $n/2\leq n_d\leq n$ and $p/2\leq p_d\leq p$ from assumption (A3), we obtain
\[P\left(\bm v^\top H_WH_W^\top \bm v<c_1\frac{n^{1-\xi_\lambda}}{p}\right)<2e^{-Cn}\quad\text{and}\quad P\left(\bm v^\top H_WH_W^\top \bm v>c_2\frac{n^{1+\xi_\lambda}}{p}\right)<2e^{-Cn},\]
where $c_1=\tilde c_1^*/4c_\lambda$ and $c_2=4\tilde c_2^*c_\lambda$.
\end{proof}
\begin{lem}\label{l.off}
Suppose $H$ follows the MACG($\Sigma$) distribution on $V_{n,p}$ with $\text{cond}(\Sigma)\leq c^*n^{\tau}$ for some positive constants $c^*$. Then, for any $0<\alpha<0.5$ and $C>0$, there exists some positive constant $\tilde c_3$, such that for any $i,j\in\{1,\cdots,p\}$ with $i\neq j$,
\[P\left(\big| \bm e_i^\top HH^\top  \bm e_j\big|>\frac{\tilde c_3n^{1+\tau-\alpha}}{p\sqrt{\log n}}\right)\leq O\left\{\exp\left(\frac{-Cn^{1-2\alpha}}{{\log n}}\right)\right\}.\]
\end{lem}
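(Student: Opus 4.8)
The plan is to turn the off-diagonal entry into an explicit scalar that is conditionally Gaussian, and then to separate its small random fluctuation (routine) from its conditional mean (the genuine difficulty). Writing $H\overset{d}{=}Z(Z^\top Z)^{-1/2}$ with $Z=\Sigma^{1/2}Z_0$ and $Z_0$ a $p\times n$ standard Gaussian matrix, exactly as in the proof of Lemma \ref{diag}, I would first record that $HH^\top=Z(Z^\top Z)^{-1}Z^\top$, so that
\[\bm e_i^\top HH^\top\bm e_j\overset{d}{=}\bm z^{(i)\top}A^{-1}\bm z^{(j)},\qquad A=Z^\top Z=\sum_{k=1}^p\bm z^{(k)}\bm z^{(k)\top},\]
where $\bm z^{(k)}\in\mathbb{R}^n$ is the $k$-th row of $Z$; these rows are jointly Gaussian with $\mathrm{Cov}(\bm z^{(k)},\bm z^{(l)})=\sigma_{kl}I_n$. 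Everything then reduces to a bilinear form in Gaussian vectors whose randomness I can expose by conditioning.

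The second ingredient is spectral control of $A$. Since $A=Z_0^\top\Sigma Z_0$ with $\lambda_{\min}(\Sigma)\le\lambda_k(\Sigma)\le\lambda_{\max}(\Sigma)$, a two-sided sandwich through $\lambda_k(\Sigma)$ combined with Lemma \ref{fan.l1} applied to the standard Gaussian matrix $Z_0^\top$ shows that, outside an event of probability $\exp(-\Theta(n))$, every eigenvalue of $A$ lies in $[\,c\,p\,\lambda_{\min}(\Sigma),\,C\,p\,\lambda_{\max}(\Sigma)\,]$. By Weyl's inequality the same bound survives deletion of a single row, so on that event $\|A_{-j}^{-1}\|_{\mathrm{op}}\lesssim (p\,\lambda_{\min}(\Sigma))^{-1}$ and $\mathrm{tr}(A_{-j}^{-1})$ is of order $n/p$ up to condition-number factors, where $A_{-j}=\sum_{k\neq j}\bm z^{(k)}\bm z^{(k)\top}$. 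This event costs only $e^{-\Theta(n)}$, comfortably inside the target probability.

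For the main estimate I would condition on all rows except the $j$-th. The Sherman--Morrison formula gives
\[\bm z^{(i)\top}A^{-1}\bm z^{(j)}=\frac{\bm z^{(i)\top}A_{-j}^{-1}\bm z^{(j)}}{1+\bm z^{(j)\top}A_{-j}^{-1}\bm z^{(j)}},\]
where $A_{-j}$ and $\bm z^{(i)}$ are measurable with respect to the conditioning (as $i\neq j$), and the denominator is $\ge 1$ and concentrates at $1+o(1)$. Conditionally $\bm z^{(j)}$ is Gaussian, so the numerator is a scalar Gaussian with variance $\omega_j^2\|A_{-j}^{-1}\bm z^{(i)}\|^2$, where $\omega_j^2=1/(\Sigma^{-1})_{jj}\le\lambda_{\max}(\Sigma)$. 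Using $\|\bm z^{(i)}\|^2\lesssim\lambda_{\max}(\Sigma)\,n$ and the spectral bound, this variance is $\lesssim\mathrm{cond}(\Sigma)^2\,n/p^2\lesssim n^{1+2\tau}/p^2$. Feeding a deviation of size $\tilde c_3\,n^{1+\tau-\alpha}/(p\sqrt{\log n})$ into the sub-Gaussian tail of Proposition \ref{subineq} produces exactly the exponent $\exp(-Cn^{1-2\alpha}/\log n)$, so the fluctuation part meets the stated rate.

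The hard part is the conditional mean $\bm z^{(i)\top}A_{-j}^{-1}\mathrm{E}[\bm z^{(j)}\mid Z_{-j}]$. Since $\mathrm{E}[\bm z^{(j)}\mid Z_{-j}]$ is the linear regression of row $j$ on the remaining rows, this term is governed by the off-diagonal structure of $\Sigma$ (equivalently of $\Sigma^{-1}$) together with $\mathrm{tr}(A_{-j}^{-1})$, and it vanishes precisely when $\Sigma$ is diagonal. I expect this to be where the hypothesis $\mathrm{cond}(\Sigma)\le c^*n^\tau$ is really used and where the factor $n^{\tau}$ in the threshold originates; concretely I would center $HH^\top$ at $\mathrm{E}[HH^\top]\approx\tfrac{n}{p}\Sigma$ and argue that the residual deterministic bias is dominated by $n^{1+\tau-\alpha}/(p\sqrt{\log n})$. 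Relative to the diagonal estimate of Lemma \ref{diag}, which needs only a crude eigenvalue sandwich, the off-diagonal bound is delicate exactly because it must beat the diagonal order $n/p$ by the extra factor $n^{-\alpha}/\sqrt{\log n}$, and a naive polarization into two quadratic forms in the manner of Lemma \ref{diag} is too lossy to capture this cancellation.
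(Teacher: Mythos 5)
First, a point of reference: the paper does not actually prove this lemma---its ``proof'' is the one-sentence remark deferring to Lemma 5 of \cite{HOLP}---so your argument has to be judged on whether it stands on its own. Much of it does. The reduction $\bm e_i^\top HH^\top\bm e_j\overset{d}{=}\bm z^{(i)\top}A^{-1}\bm z^{(j)}$, the Sherman--Morrison step, the conditional Gaussianity of $\bm z^{(j)}$ given $Z_{-j}$, and the fluctuation bound are all sound: your conditional variance bound $\omega_j^2\|A_{-j}^{-1}\bm z^{(i)}\|^2\lesssim \mathrm{cond}(\Sigma)^2\,n/p^2$, fed into a Gaussian tail at the threshold $\tilde c_3 n^{1+\tau-\alpha}/(p\sqrt{\log n})$, yields exactly $\exp(-Cn^{1-2\alpha}/\log n)$. (One repairable slip: removing the rank-one term $\bm z^{(j)}\bm z^{(j)\top}$ can only \emph{decrease} eigenvalues, so Weyl does not give you the lower spectral bound for $A_{-j}$; instead apply Lemma \ref{fan.l1} directly to the $(p-1)\times n$ matrix $Z_{-j}$, whose row covariance $\Sigma_{-j,-j}$ satisfies $\lambda_{\min}(\Sigma_{-j,-j})\geq\lambda_{\min}(\Sigma)$ by interlacing.)

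The genuine gap is the conditional-mean term, which you never bound---``I expect'' and ``I would argue'' are placeholders---and this gap cannot be closed under the lemma's stated hypotheses, because that term is \emph{not} dominated by the threshold in general. Writing $\mathrm{E}[\bm z^{(j)}\mid Z_{-j}]=Z_{-j}^\top\bm\gamma$ with $\gamma_k=-(\Sigma^{-1})_{jk}/(\Sigma^{-1})_{jj}$, the mean term is $\bm e_i^\top P_{-j}\bm\gamma$ with $P_{-j}=Z_{-j}A_{-j}^{-1}Z_{-j}^\top$ an orthogonal projection; Cauchy--Schwarz gives only $\sqrt{\bm e_i^\top P_{-j}\bm e_i}\,\|\bm\gamma\|\lesssim\sqrt{n^{1+\tau}/p}\,\|\bm\gamma\|$, which overshoots the target by an unbounded factor, so no crude bound suffices. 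Worse, your own centering heuristic shows the term is genuinely large: since $HH^\top\approx(n/\mathrm{tr}\,\Sigma)\,\Sigma$ entrywise when $p$ is large relative to $n^{1+2\tau}$, the quantity $\bm e_i^\top HH^\top\bm e_j$ concentrates near $(n/p)\sigma_{ij}$, not near zero. Concretely, take $\Sigma$ to be the AR(1) correlation matrix with $\rho=1/2$ and $i,j$ adjacent: then $\mathrm{cond}(\Sigma)\leq 9$, so the hypothesis holds with $\tau=0$, yet for, say, $p=n^2$ one has $\bm e_i^\top HH^\top\bm e_j=(n/p)\sigma_{ij}(1+o_P(1))$, of exact order $n/p$, which exceeds $\tilde c_3 n^{1-\alpha}/(p\sqrt{\log n})$ for every fixed $\tilde c_3$ once $n$ is large---so the claimed probability bound fails. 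The cancellation you would need is available only when the relevant off-diagonal entries of $\Sigma$ vanish (as in the $\Sigma=I_p$ case, where polarization works because both quadratic forms $\,(\bm e_i\pm\bm e_j)^\top HH^\top(\bm e_i\pm\bm e_j)\,$ concentrate tightly around the \emph{same} center $2n/p$; nonzero $\sigma_{ij}$ shifts the centers to $(2n/p)(1\pm\sigma_{ij})$ and destroys it). In short, your proof stalls at precisely the step where the cited HOLP argument must also do all the work, and no proof of the lemma in its stated generality exists; a correct version needs an additional hypothesis on the off-diagonals of $\Sigma$ (or on the $p/n$ regime), an issue that Corollary \ref{c.off} and the downstream Lemma \ref{1t} inherit.
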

\begin{rmk}
The proof of Lemma \ref{l.off} can be referred to the proof of Lemma 5 in \cite{HOLP}.
\end{rmk}
\begin{cor}\label{c.off}
Under assumptions (A1) and (A3), for any $C>0$, there exists some positive constant $c_3$, such that for any $i,j\in\{1,\cdots,p_d\}$ with $i\neq j$,
\[P\left(\big|\bm e_i^\top H_WH_W^\top \bm e_j\big|>\frac{c_3n^{1-0.5\xi_t-\xi_\beta-1.5\xi_\lambda}}{p\sqrt{\log n}}\right)\leq O\left\{\exp\left(\frac{-Cn^{1-\xi_t-2\xi_\beta-5\xi_\lambda}}{{\log n}}\right)\right\},\]
where $\xi_t+2\xi_\beta+5\xi_\lambda<1$ under assumption (A3).
\end{cor}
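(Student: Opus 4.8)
The plan is to read off Corollary \ref{c.off} as a direct specialization of the general off-diagonal bound in Lemma \ref{l.off}, applied to the orientation matrix $H_W$. Two ingredients are needed before the lemma can be invoked. First, by Proposition \ref{dist} the matrix $H_W$ follows the $\mathrm{MACG}(\Sigma_{22\cdot2})$ distribution on the Stiefel manifold $V_{n_d,p_d}$, so the lemma applies with ambient dimensions $(n_d,p_d)$ in place of $(n,p)$ and with the shape matrix taken to be $\Sigma_{22\cdot2}$. Second, I need the required control on the condition number: combining Proposition \ref{eigen} with assumption (A3) gives $\mathrm{cond}(\Sigma_{22\cdot2}) \le \mathrm{cond}(\Sigma) \le c_\lambda n^{\xi_\lambda}$, and since $t=o(n)$ forces $n/2 \le n_d \le n$ for $n$ large, this yields $\mathrm{cond}(\Sigma_{22\cdot2}) \le c_\lambda 2^{\xi_\lambda} n_d^{\xi_\lambda}$. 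Hence Lemma \ref{l.off} is available with exponent $\tau=\xi_\lambda$ and constant $c^{*} = c_\lambda 2^{\xi_\lambda}$.

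The crux of the argument is the choice of the free parameter $\alpha$ so that the two exponents produced by the lemma simultaneously match the two exponents demanded by the corollary. I would set $\alpha = (\xi_t + 2\xi_\beta + 5\xi_\lambda)/2$. Assumption (A3) guarantees $\xi_t + 2\xi_\beta + 5\xi_\lambda < 1$, so $\alpha < 1/2$, while nonnegativity of the $\xi$'s gives $\alpha \ge 0$; thus $\alpha$ is admissible in the range $(0,1/2)$ required by the lemma (the degenerate boundary case in which all three exponents vanish can be handled separately). With this choice the threshold exponent becomes $1 + \tau - \alpha = 1 + \xi_\lambda - (\xi_t + 2\xi_\beta + 5\xi_\lambda)/2 = 1 - 0.5\xi_t - \xi_\beta - 1.5\xi_\lambda$, exactly the power of $n$ appearing in the corollary, while the tail exponent becomes $1 - 2\alpha = 1 - \xi_t - 2\xi_\beta - 5\xi_\lambda$, exactly the power inside the exponential.

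What remains is routine bookkeeping to pass from the dimensions $(n_d,p_d)$ back to $(n,p)$. Because $n_d \asymp n$, $p_d \asymp p$ and $\log n_d \asymp \log n$, replacing $n_d,p_d,\log n_d$ by $n,p,\log n$ alters the bound only by bounded multiplicative constants; these can be absorbed into a suitably enlarged $c_3$ in the threshold (enlarging $c_3$ only shrinks the exceedance event, hence makes the probability bound easier) and, for the tail, into the arbitrary constant $C$, since $n_d^{1-2\alpha}/\log n_d \ge 2^{-(1-2\alpha)} n^{1-2\alpha}/\log n$. I expect no genuine obstacle here: all of the probabilistic work is carried by Lemma \ref{l.off}, and the only substantive steps are the algebraic calibration of $\alpha$ together with the transfer of the condition-number and dimension bounds from $\Sigma$ and $(n,p)$ to $\Sigma_{22\cdot2}$ and $(n_d,p_d)$.
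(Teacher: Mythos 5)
Your proposal is correct and is exactly the derivation the paper intends: the corollary is stated without a separate proof as an immediate consequence of Lemma \ref{l.off} applied to $H_W \sim \mathrm{MACG}(\Sigma_{22\cdot2})$ on $V_{n_d,p_d}$ (Proposition \ref{dist}), with $\tau=\xi_\lambda$ supplied by Proposition \ref{eigen} and (A3), and with the calibration $\alpha=(\xi_t+2\xi_\beta+5\xi_\lambda)/2$, which simultaneously reproduces the threshold exponent $1-0.5\xi_t-\xi_\beta-1.5\xi_\lambda$ and the tail exponent $1-\xi_t-2\xi_\beta-5\xi_\lambda$. Your bookkeeping for replacing $(n_d,p_d,\log n_d)$ by $(n,p,\log n)$ matches the paper's own remark that $n_d=O(n)$ and $p_d=O(p)$, and the degenerate boundary case $\xi_t=\xi_\beta=\xi_\lambda=0$ (where $\alpha=0$ falls outside the lemma's admissible range) that you flag is likewise left unaddressed by the paper.
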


Based on the results regarding elements of $H_WH_W^\top $, we are able to estimate the first term $H_WH_W^\top \bm \beta_\mathcal{D}$.
\begin{lem}\label{1t}
Under assumptions (A1) and (A3), for any positive constant $C$, there exist constants $c_4,c_5>0$, such that for any $i\in\mathcal{T}_\mathcal{D}$,
\[P\left(\big|\bm  e_i^\top H_WH_W^\top \bm \beta_\mathcal{D}\big|< c_4\cdot\frac{n^{1-\xi_\beta-\xi_\lambda}}{p}\right)\leq O\left\{\exp\left(\frac{-Cn^{1-\xi_t-2\xi_\beta-5\xi_\lambda}}{2\cdot{\log n}}\right)\right\},\]
and for any $j\not\in\mathcal{T}_\mathcal{D}$,
\[P\left(\big|\bm e_j^\top H_WH_W^\top \bm \beta_\mathcal{D}\big|>\frac{c_5}{\sqrt{\log n}}\frac{n^{1-\xi_\beta-\xi_\lambda}}{p}\right)\leq O\left\{\exp\left(\frac{-Cn^{1-\xi_t-2\xi_\beta-5\xi_\lambda}}{2\cdot{\log n}}\right)\right\}.\]
\end{lem}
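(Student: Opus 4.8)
The plan is to split the linear form $\bm e_i^\top H_W H_W^\top \bm\beta_\mathcal{D}$ into its diagonal contribution and an off-diagonal remainder, control the remainder uniformly via Corollary \ref{c.off}, and then dispatch the active and inactive cases by the triangle inequality. Since $\bm\beta_\mathcal{D}$ is supported on $\mathcal{T}_\mathcal{D}$, I would write
\[\bm e_i^\top H_W H_W^\top \bm\beta_\mathcal{D}=(\bm e_i^\top H_W H_W^\top \bm e_i)\,\beta_i+\sum_{k\in\mathcal{T}_\mathcal{D},\,k\neq i}(\bm e_i^\top H_W H_W^\top \bm e_k)\,\beta_k,\]
where the diagonal term is absent whenever $i\notin\mathcal{T}_\mathcal{D}$, since then $\beta_i=0$. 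A preliminary fact to record is that assumption (A3) forces $\|\bm\beta_\mathcal{D}\|^2\leq C_\beta n^{\xi_\lambda}$ for some $C_\beta>0$: indeed $\text{E}[\text{var}(y|\bm x_\mathcal{C})]=\bm\beta_\mathcal{D}^\top\Sigma_{22\cdot2}\bm\beta_\mathcal{D}+\sigma^2=O(1)$, hence $\|\bm\beta_\mathcal{D}\|^2\leq\bm\beta_\mathcal{D}^\top\Sigma_{22\cdot2}\bm\beta_\mathcal{D}/\lambda_{\text{min}}(\Sigma_{22\cdot2})$, and Proposition \ref{eigen} bounds $1/\lambda_{\text{min}}(\Sigma_{22\cdot2})$ by $\text{cond}(\Sigma)\leq c_\lambda n^{\xi_\lambda}$.

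The heart of the argument is the off-diagonal sum. There are at most $t_d-1<c_t n^{\xi_t}$ indices $k\in\mathcal{T}_\mathcal{D}$ with $k\neq i$, so applying Corollary \ref{c.off} to each and taking a union bound gives, outside an event of probability $O(c_t n^{\xi_t}\exp(-Cn^{1-\xi_t-2\xi_\beta-5\xi_\lambda}/\log n))$, the simultaneous bound $|\bm e_i^\top H_W H_W^\top \bm e_k|\leq c_3 n^{1-0.5\xi_t-\xi_\beta-1.5\xi_\lambda}/(p\sqrt{\log n})$. Combining this with the Cauchy--Schwarz estimate $\sum_{k\in\mathcal{T}_\mathcal{D}}|\beta_k|\leq\sqrt{t_d}\,\|\bm\beta_\mathcal{D}\|\leq\sqrt{c_t C_\beta}\,n^{(\xi_t+\xi_\lambda)/2}$, the remainder is at most
\[\frac{c_3\sqrt{c_t C_\beta}}{\sqrt{\log n}}\cdot\frac{n^{1-0.5\xi_t-\xi_\beta-1.5\xi_\lambda+0.5\xi_t+0.5\xi_\lambda}}{p}=\frac{c_3\sqrt{c_t C_\beta}}{\sqrt{\log n}}\cdot\frac{n^{1-\xi_\beta-\xi_\lambda}}{p},\]
the exponents collapsing to exactly $1-\xi_\beta-\xi_\lambda$. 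This already yields the second assertion: for $j\notin\mathcal{T}_\mathcal{D}$ the whole form equals this remainder, so taking $c_5=c_3\sqrt{c_t C_\beta}$ gives the claimed upper bound.

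For the first assertion, take $i\in\mathcal{T}_\mathcal{D}$. The lower tail in Lemma \ref{diag} gives $\bm e_i^\top H_W H_W^\top \bm e_i\geq c_1 n^{1-\xi_\lambda}/p$ outside an event of probability $2e^{-Cn}$, and $|\beta_i|\geq\beta_{\text{min}}\geq c_\beta n^{-\xi_\beta}$, so the diagonal term is at least $c_1 c_\beta n^{1-\xi_\beta-\xi_\lambda}/p$. Subtracting the remainder bound above via the triangle inequality, and noting that the remainder carries an extra factor $1/\sqrt{\log n}\to0$, the diagonal term dominates for $n$ large, giving the lower bound with, say, $c_4=c_1 c_\beta/2$. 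Finally comes the probability bookkeeping: the union-bound factor $n^{\xi_t}$ is absorbed by halving the exponent, since $\xi_t(\log n)^2\leq(C/2)n^{1-\xi_t-2\xi_\beta-5\xi_\lambda}$ for large $n$ (the exponent being positive under (A3)), while $2e^{-Cn}$ is negligible against $\exp(-Cn^{1-\xi_t-2\xi_\beta-5\xi_\lambda}/(2\log n))$; this is precisely where the factor $1/2$ in the stated probability originates. I expect the main obstacle to be exactly this exponent bookkeeping, namely verifying that the off-diagonal powers of $n$ cancel to the target rate $1-\xi_\beta-\xi_\lambda$ and that the polynomial union-bound loss is swallowed by the stretched-exponential tail.
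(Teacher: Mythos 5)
Your proposal is correct and follows essentially the same route as the paper's proof: the same bound $\|\bm\beta_\mathcal{D}\|^2=O(n^{\xi_\lambda})$ derived from $\mathrm{E}[\mathrm{var}(y|\bm x_\mathcal{C})]=O(1)$ via Proposition \ref{eigen}, the same union bound over $\mathcal{T}_\mathcal{D}$ using Corollary \ref{c.off} for the off-diagonal part (your $\max\times\ell_1$ Cauchy--Schwarz is algebraically identical to the paper's $\ell_2\times\ell_2$ version, yielding the same $\sqrt{t_d}\max_k|\cdot|\,\|\bm\beta_\mathcal{D}\|$ bound and the same exponent $1-\xi_\beta-\xi_\lambda$), the same diagonal lower bound from Lemma \ref{diag} with $\beta_{\min}$ and $c_4=c_1c_\beta/2$, and the same absorption of the polynomial union-bound factor into the halved exponent.
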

\begin{proof}[Proof of Lemma \ref{1t}]
Under assumptions (A1) and (A3), we have
$$\text{E}[\text{var}(y|\bm x_\mathcal{C})]=\bm \beta_{\mathcal{D}}^\top \Sigma_{22\cdot 2}\bm \beta_{\mathcal{D}}+\sigma^2\leq C_v,$$
where $C_v$ is some finite constant. Consequently, by Proposition \ref{eigen}, for $i\in \mathcal{T}_D$  we have
\[\big|\beta_{i}\big|^2\leq ||\bm \beta_{\mathcal{D}}||^2\leq C_v/\lambda_{\text{min}}(\Sigma_{22\cdot 2})\leq C_v\cdot\text{cond}(\Sigma)\leq C_vc_\lambda n^{\xi_\lambda}.\]
For any $C>0$ and $j\not\in\mathcal{T}_\mathcal{D}$, according to Corollary \ref{c.off}, with a probability of at least $1-O\left\{n^{\xi_t}\cdot\exp\left({-Cn^{1-\xi_t-2\xi_\beta-5\xi_\lambda}}/{{\log n}}\right)\right\}$, we have
\begin{align}
\big|\bm e_j^\top H_WH_W^\top \bm \beta_\mathcal{D}\big|&\leq \sum_{i\in\mathcal{T}_\mathcal{D}}\big|\bm e_j^\top H_WH_W^\top \bm e_i\big|\cdot \big|\beta_{i}\big|\leq \sqrt{\sum_{i\in\mathcal{T}_\mathcal{D}}\big|\bm e_j^\top H_WH_W^\top \bm e_i\big|^2}\cdot \big|\big|\bm \beta_{\mathcal{D}}\big|\big|\nonumber\\
&\leq \sqrt{c_t n^{\xi_t} \cdot \frac{c_3^{2}\cdot n^{2-\xi_t-2\xi_\beta-3\xi_\lambda}}{p^2\log n}\cdot C_vc_\lambda n^{\xi_\lambda}}= \frac{c_5n^{1-\xi_\beta-\xi_\lambda}}{p\sqrt{\log n}},\nonumber
\end{align}
where $c_5=c_3\sqrt{C_vc_t c_\lambda}$. As a consequence, we obtain
\begin{equation}\label{1t.2}
P\left(\big|\bm e_j^\top H_WH_W^\top \bm \beta_\mathcal{D}\big|>\frac{c_5}{\sqrt{\log n}}\frac{n^{1-\xi_\beta-\xi_\lambda}}{p}\right)\leq O\left\{\exp\left(\frac{-Cn^{1-\xi_t-2\xi_\beta-5\xi_\lambda}}{2\cdot{\log n}}\right)\right\}.
\end{equation}
Moreover, for $i\in\mathcal{T}_\mathcal{D}$, according to Lemma \ref{diag} and Corollary \ref{c.off}, with a probability of at least
\[1-2e^{-Cn}-O\left\{n^{\xi_t}\cdot\exp\left(\frac{-Cn^{1-\xi_t-2\xi_\beta-5\xi_\lambda}}{{\log n}}\right)\right\},\]
we have
\begin{align}
\big|\bm e_i^\top H_WH_W^\top \bm \beta_\mathcal{D}\big|&\geq \big|\bm e_i^\top H_WH_W^\top \bm e_i\big|\cdot \beta_{\text{min}}-\sum_{j\neq i,j\in\mathcal{T}_\mathcal{D}}\big|\bm e_i^\top H_WH_W^\top \bm e_j\big|\cdot \big|\beta_{j}\big|\nonumber\\
&\geq c_1c_\beta\frac{n^{1-\xi_\beta-\xi_\lambda}}{p}-\frac{c_5}{\sqrt{\log n}}\frac{n^{1-\xi_\beta-\xi_\lambda}}{p}\geq c_4\cdot\frac{n^{1-\xi_\beta-\xi_\lambda}}{p}, \label{diageq}
\end{align}
where $c_4=c_1c_\beta/2$. Therefore, we obtain
\begin{equation}\label{1t.1}
P\left(\big|\bm e_i^\top H_WH_W^\top \bm \beta_\mathcal{D}\big|< c_4\cdot\frac{n^{1-\xi_\beta-\xi_\lambda}}{p}\right)\leq O\left\{\exp\left(\frac{-Cn^{1-\xi_t-2\xi_\beta-5\xi_\lambda}}{2\cdot{\log n}}\right)\right\}.
\end{equation}
\end{proof}
For the second term $W(W^\top W)^{-1}Q_{\mathcal{C}}^\top \bm \epsilon$ on the right-hand side of \eqref{est2}, we have the following result.
\begin{lem}\label{2t}
Under assumptions (A1), (A2) and (A3), there exist some positive constants $c_6$ and $C_0$, such that
\[P\left(\big|\bm e_i^\top  W(W^\top W)^{-1}Q_{\mathcal{C}}^\top \bm \epsilon\big|>\frac{c_6\cdot n^{1-\xi_\beta-\xi_\lambda}}{\sqrt{\log n}\cdot p}\right)<O\left\{\exp\left(\frac{-C_0n^{1-2\xi_\beta-4\xi_\lambda}}{{\log n}}\right)\right\}.\]
\end{lem}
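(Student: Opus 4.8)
The plan is to separate the randomness of the error $\bm\epsilon$ from that of the design by conditioning. Writing $\bm a=Q_{\mathcal{C}}(W^\top W)^{-1}W^\top\bm e_i\in\mathbb{R}^{n}$, the quantity of interest is exactly $\bm a^\top\bm\epsilon$, and $\bm a$ is a function of the design matrix $X$ alone (through $W$ and $Q_{\mathcal{C}}$). By assumption (A2) the error vector $\bm\epsilon$ is independent of $\bm x$, hence of $X$, so conditional on $X$ the vector $\bm a$ is fixed and $\bm a^\top\bm\epsilon=\|\bm a\|\cdot(\bm a/\|\bm a\|)^\top\bm\epsilon$ is a weighted sum of i.i.d.\ mean-zero sub-Gaussian variables to which Proposition \ref{subineq} applies. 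This reduces the lemma to two ingredients: a high-probability upper bound on $\|\bm a\|$, and the conditional sub-Gaussian tail bound.

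First I would control $\|\bm a\|$. Using $Q_{\mathcal{C}}^\top Q_{\mathcal{C}}=I_{n_d}$ and the polar decomposition $W=H_W(W^\top W)^{1/2}$, a short computation gives $\|\bm a\|^2=\bm e_i^\top H_W(W^\top W)^{-1}H_W^\top\bm e_i$, whence $\|\bm a\|^2\le \bm e_i^\top H_WH_W^\top\bm e_i\,/\,\lambda_{\min}(W^\top W)$. The diagonal factor $\bm e_i^\top H_WH_W^\top\bm e_i$ is bounded above by $c_2 n^{1+\xi_\lambda}/p$ via Lemma \ref{diag}. For $\lambda_{\min}(W^\top W)$, recall from Proposition \ref{dist} that $W\overset{d}{=}\Sigma_{22\cdot2}^{1/2}Z$ with $Z\sim N_{p_d,n_d}(0;I_{p_d},I_{n_d})$, so that $W^\top W=Z^\top\Sigma_{22\cdot2}Z\succeq\lambda_{\min}(\Sigma_{22\cdot2})\,Z^\top Z$; applying Lemma \ref{fan.l1} to $Z^\top$ yields $\lambda_{\min}(Z^\top Z)\ge\widetilde c_\lambda^{-1}p_d$ outside an event of probability $\exp(-\widetilde C_\lambda n_d)$, while Proposition \ref{eigen} and (A3) give $\lambda_{\min}(\Sigma_{22\cdot2})\ge 1/\mathrm{cond}(\Sigma)\ge(c_\lambda n^{\xi_\lambda})^{-1}$. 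Combining these with $p/2\le p_d\le p$ produces $\|\bm a\|^2\le C'\,n^{1+2\xi_\lambda}/p^2$ off an event of probability $O(\exp(-cn))$.

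Next, on the good event where this norm bound holds, Proposition \ref{subineq} gives, conditionally on $X$,
\[
P\bigl(|\bm a^\top\bm\epsilon|>t \mid X\bigr)\le e\cdot\exp\bigl(-C_\xi\,t^2/\|\bm a\|^2\bigr),
\]
with $t=c_6 n^{1-\xi_\beta-\xi_\lambda}/(\sqrt{\log n}\,p)$. Substituting the bound on $\|\bm a\|^2$ makes $t^2/\|\bm a\|^2\gtrsim n^{1-2\xi_\beta-4\xi_\lambda}/\log n$, exactly the exponent in the target. Taking expectation over $X$ and splitting on the good event and its complement, the complement contributes only $O(\exp(-cn))$, which is negligible against $\exp(-C_0 n^{1-2\xi_\beta-4\xi_\lambda}/\log n)$ since $1-2\xi_\beta-4\xi_\lambda\le 1$. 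Choosing $C_0$ in terms of $C_\xi$, $c_6$, $c_2$, $c_\lambda$ and $\widetilde c_\lambda$ then delivers the stated bound.

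The main obstacle is the entanglement between $\bm a$ and $\bm\epsilon$: Proposition \ref{subineq} applies only to a fixed unit vector, whereas $\bm a/\|\bm a\|$ is itself random through the design. Conditioning on $X$ resolves this using the independence in (A2), but it forces the deterministic norm control of the second step to hold on an event whose failure probability must be shown negligible \emph{relative to} the sub-Gaussian tail. Reconciling the two distinct rates, namely $\exp(-cn)$ from the eigenvalue and diagonal controls versus the slower $\exp(-C_0 n^{1-2\xi_\beta-4\xi_\lambda}/\log n)$ from the error tail, is the delicate bookkeeping step, but it goes through because the former decays strictly faster.
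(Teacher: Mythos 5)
Your proof is correct and follows essentially the same route as the paper's: the same factorization of $\bm e_i^\top W(W^\top W)^{-1}Q_{\mathcal{C}}^\top\bm\epsilon$ into a norm factor times a unit-vector contraction of $\bm\epsilon$, the same high-probability bound $\|\bm a\|^2\lesssim n^{1+2\xi_\lambda}/p^2$ assembled from Lemma \ref{diag}, Lemma \ref{fan.l1} and Proposition \ref{eigen}, and the same final application of Proposition \ref{subineq} with the same rate bookkeeping. The one point where you are more careful than the paper is in explicitly conditioning on $X$ to justify applying Proposition \ref{subineq} to the design-dependent unit vector $\bm a/\|\bm a\|$: the paper invokes that proposition directly on the random weights $a_{ij}$ without comment, and your conditioning argument via the independence in (A2), together with the observation that the failure event of the norm bound has probability $O(e^{-cn})$ and is thus negligible relative to the sub-Gaussian tail, is exactly the right way to make that step rigorous.
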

\begin{proof}[Proof of Lemma \ref{2t}]
Denote
\[\bm \eta=(\eta_1,\cdots,\eta_{p_d})^\top =W(W^\top W)^{-1}Q_{\mathcal{C}}^\top \bm \epsilon,\]
and
\[a_{ij}=\bm {e}_i^\top W(W^\top W)^{-1}Q_{\mathcal{C}}^\top  \bm e_j/\sqrt{\bm {e}_i^\top W(W^\top W)^{-2}W^\top \bm {e}_i},\]
where the numerator denotes the element of $W(W^\top W)^{-1}Q_{\mathcal{C}}^\top $ in the $i$-th row and $j$-th column and the denominator denotes the norm of the $i$-th row vector in $W(W^\top W)^{-1}Q_{\mathcal{C}}^\top $.
Therefore, we have
\begin{equation}\label{2t.0}
\eta_i=\sqrt{\bm {e}_i^\top W(W^\top W)^{-2}W^\top \bm {e}_i}\cdot \bm{a_i\epsilon},
\end{equation}
where $\bm{a_i}=(a_{i1},\cdots,a_{in})$ with $||\bm{a_i}||=1$. For the scalar term $\sqrt{\bm {e}_i^\top W(W^\top W)^{-2}W^\top \bm {e}_i}$, we have
\begin{align}
\bm {e}_i^\top W(W^\top W)^{-2}W^\top \bm {e}_i&\leq \lambda_{\text{max}}((W^\top W)^{-1})\cdot\bm {e}_i^\top H_WH_W^\top \bm {e}_i\nonumber\\
&=\lambda^{-1}_{\text{min}}(Z^\top \Sigma_{22\cdot 2}Z)\cdot\bm {e}_i^\top H_WH_W^\top \bm {e}_i,\label{2t.1}
\end{align}
where $Z$ follows the normal distribution $N_{p_d,n_d}(0;I_{p_d},I_{n_d})$. According to Lemma \ref{fan.l1}, there exist some $\widetilde C_\lambda>0$ and $\widetilde c_\lambda>1$, such that
\[P\left(\lambda_{\text{min}}(p_d^{-1}Z^\top Z)<\widetilde c_\lambda^{-1}\right)<e^{-\widetilde C_\lambda n_d}.\]
Consequently, together with the fact that
\[
\lambda^{-1}_{\text{min}}(Z^\top \Sigma_{22\cdot 2}Z)\leq\lambda^{-1}_{\text{min}}(\Sigma_{22\cdot 2})\cdot \lambda^{-1}_{\text{min}}(Z^\top Z)\leq \text{cond}(\Sigma)\cdot p_d^{-1}\cdot\lambda^{-1}_{\text{min}}(p_d^{-1}Z^\top Z),
\]
we have
\begin{equation}\label{2t.2}
P\left(\lambda^{-1}_{\text{min}}(Z^\top \Sigma_{22\cdot 2}Z)>2{c_\lambda}{\widetilde c_\lambda}\cdot\frac{n^{\xi_\lambda}}{p}\right)<e^{-\widetilde C_\lambda n/2}.
\end{equation}
Meanwhile, according to Lemma \ref{diag}, for the same $\widetilde C_\lambda$, there exists some positive constant $c_2$ such that
\begin{equation}\label{2t.3}
P\left(\bm e_i^\top H_WH_W^\top \bm e_i>c_2\cdot\frac{n^{1+\xi_\lambda}}{p}\right)<2e^{-\widetilde C_\lambda n}.
\end{equation}
Therefore, combining \eqref{2t.1}, \eqref{2t.2} and \eqref{2t.3}, we have
\begin{equation}\label{2t.4}
P\left(\bm {e}_i^\top W(W^\top W)^{-2}W^\top \bm {e}_i>2{c_2c_\lambda\widetilde c_\lambda }\cdot\frac{n^{1+2\xi_\lambda}}{p^2}\right)<O\left\{\exp\left({-\widetilde C_\lambda n/2}\right)\right\}.
\end{equation}
Furthermore, by Proposition \ref{subineq}, letting $z=\sqrt{n^{1-2\xi_\beta-4\xi_\lambda}/{\log n}}$, there exists some positive constant $C_0$ such that
\begin{equation}\label{2t.5}
P\left(\left|\sum_{j=1}^n a_{ij}\epsilon_j\right|>\sqrt{\frac{n^{1-2\xi_\beta-4\xi_\lambda}}{\log n}}\right)<O\left\{\exp\left(\frac{-C_0 n^{1-2\xi_\beta-4\xi_\lambda}}{{\log n}}\right)\right\}.
\end{equation}
Together with \eqref{2t.4} and \eqref{2t.5}, denoting $c_6=\sigma\sqrt{2c_2c_\lambda\widetilde c_\lambda}$, we obtain that
\[P\left(\big|\eta_i\big|>\frac{c_6\cdot n^{1-\xi_\beta-\xi_\lambda}}{\sqrt{\log n}\cdot p}\right)<O\left\{\exp\left(\frac{-C_0n^{1-2\xi_\beta-4\xi_\lambda}}{{\log n}}\right)\right\}.\]
\end{proof}
\begin{proof}[Proof of Theorem 1]
Recall that for any $i\in\mathcal{D}$, we have the corresponding COLP estimator
\[\hat{ \beta}_{i}=\bm e_i^\top H_WH_W^\top \bm \beta_\mathcal{D}+\bm e_i^\top  W(W^\top W)^{-1}Q_{\mathcal{C}}^\top \bm \epsilon.\]
According to Lemma \ref{1t}, for the same $C_0$ as defined in Lemma \ref{2t}, there exists some constant $c_4>0$ such that
\begin{align*}
P\left(\min_{i\in \mathcal{T}_\mathcal{D}}\big|\bm e_i^\top H_WH_W^\top \bm \beta_\mathcal{D}\big|<c_4\frac{n^{1-\xi_\beta-\xi_\lambda}}{p}\right)&< O\left\{c_t n^{\xi_t}\cdot\exp\left(\frac{-C_0\cdot n^{1-\xi_t-2\xi_\beta-5\xi_\lambda}}{2\cdot{\log n}}\right)\right\}\\
&< O\left\{\exp\left(\frac{-C_0\cdot n^{1-\xi_t-2\xi_\beta-5\xi_\lambda}}{3\cdot{\log n}}\right)\right\}.
\end{align*}
By Lemma \ref{2t}, we also have
\begin{align*}
P\left(\max_{i\in \mathcal{T}_\mathcal{D}}\big|\bm e_i^\top  W(W^\top W)^{-1}Q_{\mathcal{C}}^\top \bm \epsilon\big|>\frac{c_6\cdot n^{1-\xi_\beta-\xi_\lambda}}{\sqrt{\log n}\cdot p}\right)&<O\left\{c_t n^{\xi_t}\cdot\exp\left(\frac{-C_0\cdot n^{1-2\xi_\beta-4\xi_\lambda}}{{\log n}}\right)\right\}\\
&< O\left\{\exp\left(\frac{-C_0\cdot n^{1-2\xi_\beta-4\xi_\lambda}}{{2\cdot\log n}}\right)\right\}.
\end{align*}
Therefore, for any threshold parameter $\gamma_n$ satisfying that
\[\frac{n^{1-\xi_\beta-\xi_\lambda}}{\sqrt{\log n}\cdot p}=o(\gamma_n)\quad\text{and}\quad \gamma_n = o\left(\frac{n^{1-\xi_\beta-\xi_\lambda}}{p}\right),\]
denoting $C=C_0/3$, we obtain that
\begin{align*}
P\left(\min_{i\in \mathcal{T}_\mathcal{D}}\big|\hat\beta_{i}\big|\leq\gamma_n\right)&\leq P\left(\min_{i\in \mathcal{T}_\mathcal{D}}\big|\bm e_i^\top H_WH_W^\top \bm \beta_\mathcal{D}\big|<c_4\frac{n^{1-\xi_\beta-\xi_\lambda}}{p}\right)\\
&+P\left(\max_{i\in \mathcal{T}_\mathcal{D}}\big|\bm e_i^\top  W(W^\top W)^{-1}Q_{\mathcal{C}}^\top \bm \epsilon\big|>\frac{c_6\cdot n^{1-\xi_\beta-\xi_\lambda}}{\sqrt{\log n}\cdot p}\right)\\
&< O\left\{\exp\left(\frac{-C_0\cdot n^{1-\xi_t-2\xi_\beta-5\xi_\lambda}}{3\cdot{\log n}}\right)\right\}\\
&=O\left\{\exp\left(\frac{-C\cdot n^{1-\xi_t-2\xi_\beta-5\xi_\lambda}}{{\log n}}\right)\right\}.
\end{align*}
Consequently, if we determine the final model applying such a threshold parameter, we have
\[P\left(\mathcal{T}_\mathcal{D}\subset \mathcal{S}^{\gamma_n}\right)\geq 1- O\left\{\exp\left(\frac{-C\cdot n^{1-\xi_t-2\xi_\beta-5\xi_\lambda}}{{\log n}}\right)\right\}.\]
\end{proof}
\begin{proof}[Proof of Theorem 2]
Taking the same $C_0$ as defined in Theorem 1, by Lemma \ref{1t}, we know that there exists some $c_5>0$ such that
\[P\left(\max_{j\not\in\mathcal{T}_\mathcal{D}}\big|\bm e_j^\top H_WH_W^\top \bm \beta_\mathcal{D}\big|>\frac{c_5}{\sqrt{\log n}}\frac{n^{1-\xi_\beta-\xi_\lambda}}{p}\right)\leq O\left\{p_d\cdot\exp\left(\frac{-C_0n^{1-\xi_t-2\xi_\beta-5\xi_\lambda}}{2\cdot{\log n}}\right)\right\}.\]
According to Lemma \ref{2t}, we also have
\[P\left(\max_{j\not\in\mathcal{T}_\mathcal{D}}\big|\bm e_j^\top  W(W^\top W)^{-1}Q_{\mathcal{C}}^\top \bm \epsilon\big|>\frac{c_6\cdot n^{1-\xi_\beta-\xi_\lambda}}{\sqrt{\log n}\cdot p}\right)<O\left\{p_d\cdot \exp\left(\frac{-C_0n^{1-2\xi_\beta-4\xi_\lambda}}{{\log n}}\right)\right\}.\]
Let $\gamma_n$ be a threshold parameter follows the same assumption in Theorem 1. Under the assumption that
\[\log p = o\left(\frac{n^{1-\xi_t-2\xi_\beta-5\xi_\lambda}}{{\log n}}\right),\]
we have
\begin{align*}
P\left(\max_{j\not\in \mathcal{T}_\mathcal{D}}\big|\hat\beta_{j}\big|\geq\gamma_n\right)&\leq P\left(\max_{j\not\in\mathcal{T}_\mathcal{D}}\big|\bm e_j^\top H_WH_W^\top \bm \beta_\mathcal{D}\big|>\frac{c_5}{\sqrt{\log n}}\frac{n^{1-\xi_\beta-\xi_\lambda}}{p}\right)\\
&+P\left(\max_{j\not\in\mathcal{T}_\mathcal{D}}\big|\bm e_j^\top  W(W^\top W)^{-1}Q_{\mathcal{C}}^\top \bm \epsilon\big|>\frac{c_6\cdot n^{1-\xi_\beta-\xi_\lambda}}{\sqrt{\log n}\cdot p}\right)\\
&< O\left\{\exp\left(\log p_d-\frac{C_0n^{1-\xi_t-2\xi_\beta-5\xi_\lambda}}{2\cdot{\log n}}\right)\right\}\\
&< O\left\{\exp\left(\frac{-C\cdot n^{1-\xi_t-2\xi_\beta-5\xi_\lambda}}{{\log n}}\right)\right\},
\end{align*}
where $C=C_0/3$ as defined in Theorem 1. Combining with Theorem 1, we have
\[P\left(\max_{j\not\in \mathcal{T}_\mathcal{D}}\big|\hat\beta_{j}\big|<\gamma_n<\min_{i\not\in \mathcal{T}_\mathcal{D}}\big|\hat\beta_{i}\big|\right)\geq 1-O\left\{\exp\left(\frac{-C\cdot n^{1-\xi_t-2\xi_\beta-5\xi_\lambda}}{{\log n}}\right)\right\}.
\]
Therefore, if we choose the model with size $d_n\geq c_t n^{\xi_t} \geq t_c$, we have
\[P\left(\mathcal{T}_\mathcal{D}\subset \mathcal{S}_{d_n}\right)\geq 1- O\left\{\exp\left(\frac{-C\cdot n^{1-\xi_t-2\xi_\beta-5\xi_\lambda}}{{\log n}}\right)\right\}.\]
\end{proof}

\bibliographystyle{abbrvnat}
\bibliography{test}
\end{document}